\def\mcirc{\mathbin{\scalerel*{\bigcirc}{t}}}
\newtheorem{lemma}{Lemma}[section]
\newtheorem{assumption}[lemma]{Assumption}
\newtheorem{example}[lemma]{Example}
\newtheorem{corollary}[lemma]{Corollary}
\newtheorem{definition}[lemma]{Definition}
\newtheorem{theorem}[lemma]{Theorem}
\newtheorem{remark}[lemma]{Remark}
\newtheorem{proposition}[lemma]{Proposition}
\numberwithin{equation}{section}
\numberwithin{lemma}{section}
\def\lambdaG{{\lambda_1(\Graph)}}
\def\lambdaGVD{{\lambda_1(\Graph;{{{\mVD}}})}}
 \def\mG{\mathsf{G}}
 \def\mV{\mathsf{V}}
   \def\mVD{\mathsf{V}_{\mathrm{D}}}
   \def\mVN{\mathsf{V}_{\mathrm{N}}}
   \def\mED{\mathsf{E}_{\mathrm{D}}}
   \def\mEN{\mathsf{E}_{\mathrm{N}}}
 \def\mE{\mathsf{E}}
 \def\torsd{T}
 \def\mv{\mathsf{v}}
 \def\me{\mathsf{e}}
 \def\mw{\mathsf{w}}
 \def\Grapht{\{\psi=t\}}
 \def\Gammat{\{\psi>t\}}
 \def\Gammas{\{\psi>s\}}
 \def\Graphtau{\{\psi=\tau\}}
\newcommand{\1}{\mathbf{1}}
\newcommand{\R}{\mathbb{R}}
\DeclareMathOperator{\real}{Re}
\DeclareMathOperator{\Dir}{Dir}
\newcommand{\Diri}{\#\Dir}
\newcommand{\C}{\mathbb{C}}
\newcommand{\N}{\mathbb{N}}
\newcommand{\ud}{\,\mathrm{d}}
\newcommand{\e}{\mathrm{e}}
\newcommand{\Graph}{\mathcal{G}} % (Metric) graph
\newcommand{\NewGraph}{\widetilde{\Graph}} % (Another) metric graph
\DeclareMathOperator*{\essinf}{ess\;inf}
\DeclareMathOperator{\supp}{supp}
\newcommand{\Tmod}{T_{\mathrm{mod}}}
\DeclareMathOperator{\Inr}{Inr}
\DeclareMathOperator{\dist}{dist}
\title
[On torsional rigidity and ground-state energy of compact quantum graphs]
{On torsional rigidity and ground-state energy\\ of compact quantum graphs}
\author[D.~Mugnolo]{Delio Mugnolo}
\author[M.~Plümer]{Marvin Plümer}
\address{Delio Mugnolo, Lehrgebiet Analysis, Fakult\"at Mathematik und Informatik, Fern\-Universit\"at in Hagen, D-58084 Hagen, Germany}
\email{delio.mugnolo@fernuni-hagen.de}
\address{Marvin Plümer, Lehrgebiet Analysis, Fakult\"at Mathematik und Informatik, Fern\-Universit\"at in Hagen, D-58084 Hagen, Germany}
\email{marvin.pluemer@fernuni-hagen.de}
\subjclass[2010]{34B45 (05C50 35P15 81Q35)}
\keywords{Torsion function; Spectral geometry of quantum graphs; Shape optimization; Kohler-Jobin rearrangement; {Landscape functions}}
\thanks{The authors were partially supported by the Deutsche Forschungsgemeinschaft (Grant 397230547).
}
\begin{document}
\begin{abstract}
We develop the theory of torsional rigidity -- a quantity routinely considered for Dirichlet Laplacians on bounded planar domains -- for Laplacians on metric graphs {with at least one Dirichlet vertex}. Using a variational characterization that goes back to Pólya, we develop surgical principles that, in turn, allow us to prove isoperimetric-type inequalities: we can hence compare the torsional rigidity of general metric graphs with that of intervals of the same total length. In the spirit of the Kohler-Jobin Inequality, we also derive sharp bounds on the {ground-state energy} of a quantum graph in terms of its torsional rigidity: this is particularly attractive since computing the torsional rigidity reduces to inverting a matrix whose size is the number of the graph's vertices and is, thus, much easier than computing eigenvalues.
\end{abstract}

\maketitle

\tableofcontents

\section{Introduction}

Our aim in this article is to develop the theory of torsional rigidity for Laplacians on metric graphs: Following the terminology introduced by Pólya in~\cite{Pol48}, we will call \emph{torsion function} (with respect to the Dirichlet set ${{\mVD}}$) of a metric graph $\Graph$ with vertex set $\mV\supset {{\mVD}}\ne \emptyset$ the unique solution $v$ of the elliptic problem
\begin{equation}\label{eq:polya0}
\left\{
\begin{aligned}
-\Delta v(x)&=1,\quad &&x\in \Graph,\\
v(\mv)&=0,&&\mv\in {{\mVD}},
\end{aligned}
\right.
\end{equation}
and \emph{torsional rigidity} {its $L^1$-norm $T(\Graph):=T(\Graph;{{\mVD}}):=\|v\|_{L^1}$}. 

General self-adjoint differential operators supported on metric graphs are a popular toy model in both spectral theory and analysis of evolution equations~\cite{BerKuc13,Mug14,Kur22}: they usually go under the name of \emph{quantum graphs}. In particular, much attention has  been lately devoted to the study of the interplay of graphs' connectivity and metric properties on the one hand, and Laplacian eigenvalues on the other; while torsional properties of metric graphs have been, to the best of our knowledge, only discussed in~\cite{ColKagMcd16}.

 The theory of torsional rigidity of three-dimensional bodies with constant section $\Omega{\subset \R^2}$ goes back to Saint-Venant \cite{Sai55}, but the first rigorous mathematical contributions can be found in~\cite{Pol48,PolSze51}.
In his early meaning, torsional rigidity is a quantity in rational mechanics: defined as the $L^1$-norm of the solution $v$ of
\begin{equation}\label{eq:polya0-d}
\left\{
\begin{aligned}
-\Delta v(x)&=1,\quad &&x\in \Omega,\\
v(z)&=0,&&z\in \partial \Omega,
\end{aligned}
\right.
\end{equation}
 it is proportional (by $\theta\mu$, where $\mu$ is the body's {shear modulus}) to the couple resisting a given twist $\theta$. {Pólya observes that the torsional rigidity is ``a purely geometric constant, depending on size and shape of the domain''  \cite{Pol48}: indeed, a} classical result by Pólya -- clearly reminiscent of the Faber--Krahn inequality for the {ground-state energy} of the Dirichlet Laplacian and already conjectured by Saint-Venant based on physical considerations -- states that of all open bounded domains $\Omega\subset \R^2$ of given area $|\Omega|$, the circular one has the greatest torsional rigidity, i.e.,
\[
T(\Omega)\le T(\mcirc)= \frac{|\Omega|^2}{2\pi}.
\]
%Pólya summarizes his own findings writing that (\cite[Section~VI.3]{Pol48})
%\begin{quote}
%\emph{[O]f all plane domains with a given area , the circle has the maximum torsional rigidity [...] [This] has been clearly foreseen by Saint-Venant who supported it by convincing physical considerations [...]}
%\end{quote}
The crucial idea in Pólya's approach was the observation (\cite[page 272]{Pol48}, see also~\cite[\S~9B]{PolSze51}) that $T(\Omega)$ can be conveniently considered as a critical point of the Euler--Lagrange equation associated with~\eqref{eq:polya0-d} and that, accordingly, $T(\Omega)$ admits a variational characterization as
% {as the supremum of a strictly concave functional,}
%\begin{equation}\label{eq:polya-variat-2}
%T(\Omega)=
%%2\sup_{g\in \mathcal C} \mathfrak{F}(g\circ \psi)\\&=
%\sup_{u\in H^1_0(\Omega)} \left(2\int_{\Omega} u(x)\ud x- \int_{\Omega}|u'(x)|^2\ud x\right)
%\end{equation}
%or, equivalently,
\begin{equation}\label{eq:polya-variat}
T(\Omega)=\sup_{v\in H^1_0(\Omega)} \frac{\left(\int_{\Omega} v\ud x\right)^2}{\|\nabla v\|^2_{L^2}}=\sup_{v\in H^1_0(\Omega)} \frac{\|v\|_{L^1}^2}{\|\nabla v\|^2_{L^2}},
\end{equation}
{see, e.g., \cite[Proposition~2.2]{Bra14}.}
Indeed, these suprema are attained and 
the unique maximizer of~\eqref{eq:polya-variat} is precisely the solution of~\eqref{eq:polya0-d}: it is called \emph{torsion function} in the literature (or sometimes \emph{warping function}, \cite[Section~II.2.3]{Ban80}). In the last decade it has attracted further interest after its role in wave 	localization phenomena for general Schrödinger operators has been greatly popularized by Filoche and Mayboroda~\cite{FilMay12}.
Ever since Pólya's pioneering study, investigations by Makai, Payne, Kohler-Jobin, and further authors have convincingly shown the rich theory of torsional rigidity in the context of shape optimization, often using symmetrization and rearrangement techniques.

This is the starting point of our article: Indeed, Pólya's basic definitions and properties of torsion on domains can be easily seen to carry over to metric graphs, up to replacing Lebesgue and Sobolev spaces on planar domains by their counterparts on metric graphs, thus leading to~\eqref{eq:polya0}. We are going to {consider the Laplacian $\Delta_{\Graph}$ on $\Graph$ with Dirichlet conditions on the vertices in ${{\mVD}}$ and} study two classes of problems: on one hand, we will prove sharp estimates on the torsional rigidity  $T(\Graph):={T(\Graph;{{\mVD}})}$ in its own right, like
\begin{equation}\label{eq:polya-m}
\frac{|\Graph|^3}{12|\mE|^2}\le T(\Graph{;{{\mVD}}})\le \frac{|\Graph|^3}{3}\qquad {\hbox{for all }\emptyset\ne {{\mVD}}\subset\mV}
\end{equation}
(see Equations~\eqref{eq:tors-first-1} and~\eqref{eq:firstlower-2}), 
where $|\Graph|$ is the total length of the metric graph $\Graph$ and $|\mE|$ is the number of its edges; { in particular, the torsional rigidity scales as $|\Graph|^3$}. {(In the case of domains, estimates like these are not available unless Dirichlet conditions are imposed \textit{on the whole boundary}. Because $\Delta_\Graph$ is not invertible in $L^2(\Graph)$ if ${{\mVD}}=\emptyset$, 
%$\|\Delta_\Graph^{-1}\|\ge \dist(0\sigma(\Delta_\Graph)$, 
formally $T(\Graph;\emptyset)=\infty$: it is hence remarkable that the uniform estimate in~\eqref{eq:polya-m} holds as soon as ${{\mVD}}\ne\emptyset$.)}

On the other hand, by using techniques that are classical in the theory of of torsional rigidity, we will provide estimates on the {ground-state energy} 
\[
\lambdaGVD :=\min_{f\in H^1_0(\Graph;{{\mVD}})}\frac{\|f'\|^2_{L^2}}{\|f\|^2_{L^2}}
\]
 (i.e., the lowest eigenvalue of $-\Delta_\Graph$) by other objects, like
\begin{equation}\label{eq:heat-first-app}
\left(\frac{\pi}{\sqrt[3]{24 \|p_{\Graph{;{{\mVD}}}}\|_{L^1}}} \right)^2\le \lambdaGVD \le \frac{|\Graph|}{\|p_{\Graph{;{{\mVD}}}}\|_{L^1}}\qquad {\hbox{for all }\emptyset\ne {{\mVD}}\subset\mV}
\end{equation}
(see Equations~\eqref{eq:polya-upper} and~\eqref{eq:kj-vanilla}),
where $p_{\Graph{;{{\mVD}}}}\in L^1(\R_+\times \Graph\times \Graph)$ is the \emph{heat kernel}, i.e., the integral kernel of the heat semigroup generated by $\Delta_\Graph$ {with Dirichlet conditions on the vertices in ${{\mVD}}$}.

Not only will we replicate some of the most important results on torsion function and torsional rigidity of planar domains in the context of metric graph; we are also going to refine the direct counterparts of the classical results for special classes of metric graphs, like trees {(i.e., simply connected metric graphs)} or doubly connected graphs.
Indeed, in the one-dimensional setting of metric graphs techniques can be applied that seem to not be available in higher dimensional settings. On one hand, the equation~\eqref{eq:polya0} can be solved in a semi-explicit way
by solving an algebraic system of {$|\mV|-|{{\mVD}}|$ equations in $|\mV|-|{{\mVD}}|$ unknowns, where $|\mV|$ is the number of vertices of $\Graph$ and $|{{\mVD}}|$ is the number of Dirichlet vertices}: this paves the road to a geometric description of the torsion function that is much easier than for eigenfunctions. At the same time, the last decade has witnessed strong advances in the refinement of surgical principles for critical points of functionals defined on metric graphs: the recent article~\cite{BerKenKur19} is a comprehensive collection of techniques that go far beyond elementary test-function arguments.
%: such techniques have been often applied to derive estimates on the {ground-state energy} $\lambdaG $.
% of (minus) the Dirichlet Laplacian $-\Delta_{\Graph}$ on metric graphs: 
%indeed, $\lambdaG $ fffis strictly positive and \cite[Théorème~3.1]{Nic87}, \cite[Lemma~4.3]{BerKenKur17} state that
%\begin{equation}\label{eq:nicaisebkkm}
%\lambdaG \ge \frac{\pi^2}{4|\Graph|^2}\quad\hbox{or}\quad \lambdaG \ge \frac{\pi^2}{|\Graph|^2},
%\end{equation}
%respectively if, upon gluing all Dirichlet vertices, $\Graph$ is simply or doubly connected; with equality if and only if $\Graph$ is a path graph with mixed Dirichlet/Neumann boundary conditions, respectively a so-called \emph{caterpillar graph}.
We are thus going to borrow some ideas proposed by several authors for the study of spectral geometry of metric graphs, including~\cite{Fri05,Col15,KenKurMal16,BanLev17,BerKenKur17}, and combine them with techniques more typical of higher dimensional torsional theory \cite{Pol48,Koh78,Bra14}. In this way, not only can we reproduce in the metric graph context some well-known geometric bounds on the torsional rigidity and its product with (a power of) the {ground-state energy}; we can also sharpen some of them in the case of graphs of higher connectivity
%, like in~\eqref{eq:nicaisebkkm} 
-- a behavior that in the context of Laplacian eigenvalues of metric graphs has been discovered in~\cite{BanLev17,BerKenKur17}, but seems to have no counterpart in torsional theory of higher dimensional domains.

%***
%
%An additional element of interest of this theory in the context of metric graphs is that, unlike the eigenfunctions, the torsion function -- and hence the torsional rigidity -- can be very easily explicitly computed 
%***

A further remarkable feature of torsional rigidity is its interplay with the heat equation, already hinted at in~\eqref{eq:heat-first-app}. Indeed, for the Laplacian $\Delta_\Graph$ on a metric graph $\Graph$ with Dirichlet conditions on a non-empty set ${{\mVD}}$ of vertices (and natural -- i.e., continuity and Kirchhoff -- conditions at all other vertices $\mV\setminus {{\mVD}}$), the quantity
	\[\mathcal Q_{\Graph}(t):=\|\e^{t\Delta_\Graph}\1\|_{L^1}=\int_\Graph\int_\Graph p_{\Graph{;{{\mVD}}}}(t;x,y)\ud y\ud x,\quad t> 0,\]
is called the \emph{heat content} of \(\Graph\) at time $t$: intuitively, the profile of $t\mapsto \mathcal Q_{\Graph}(t)$ describes how fast a metric graph is dissipating heat. 
%If ${{\mVD}}=\emptyset$, then the null space of $\Delta_{\Graph}$ is the one-dimensional space of the constants over $\Graph$, which is therefore left invariant under $(\e^{t\Delta_{\Graph}})_{t\ge 0}$: it follows that $\1_\Graph$ is a fixed point of the semigroup and therefore the heat content agrees with the total length of $\Graph$, ${\mathcal Q}_{\Graph}(t)=|\Graph|$. To avoid trivialities, we will thus assume in the following that
%This holds at first for the Laplacian on $L^2(\Graph)$, and then also for its realization in $L^1(\Graph)$, by~\cite[\S~7.4.6]{Are06} and 
Because $(\e^{t\Delta_{\Graph}})_{t\ge 0}$ is known to satisfy Gaussian estimates~\cite[Thm.~4.7]{Mug07},
% it follows that 
\(
t\mapsto {\mathcal Q}_{\Graph}(t)
\)
is of class $L^1(0,\infty)$: we will call the $L^1$-norm of $Q_{\Graph}$, i.e.,
\begin{equation}\label{eq:intheacon}
\|{\mathcal Q}_\Graph\|_{L^1(0,\infty)}=\|p_{\Graph{;{{\mVD}}}}\|_{L^1}=\int_0^\infty \int_\Graph\int_\Graph p_{\Graph{;{{\mVD}}}}(t;x,y)\ud y\ud x\ud t,
\end{equation}
the \emph{integrated heat content} of $\Graph$ (with respect to the Dirichlet vertex ${{\mVD}}$). While it is known at latest since \cite{BerDav89} that the heat content -- at least in the case of domains in $\R^d$ -- carries interesting geometric information, estimates on ${\mathcal Q}_{\Graph}(t)$ are not easy to derive and will be discussed in a companion paper~\cite{MugPlu21b}. However, the \emph{integrated} heat content is a much more treatable quantity: our main results in this paper can { be} interpreted as geometric estimates (from above and below) of such integrated heat content, as in~\eqref{eq:heat-first-app}.
Indeed, the integrated heat content agrees with the torsional rigidity of $\Graph$, in view of elementary results from semigroup theory.

Let us present the plan of this article. 

After recalling some basic definitions in the theory of metric graphs, we introduce the torsional rigidity of a quantum graph in Section~\ref{sec:basic}. We also compute the torsional rigidity in a few simple examples.

In Section~\ref{sec:reduction} we present a first simple, yet effective tool for our analysis: we show (Proposition~\ref{prop:discrete-system}) that a quantum graph's torsion function can be computed explicitly -- unlike for eigenfunctions, this is true even in the non-equilateral case!  -- upon passing to a discretized problem: this boils down to solve an algebraic system of linear equations.

In Section~\ref{sec:surg} we introduce (Proposition~\ref{prop:surgery-0}) a new toolbox, mostly inspired by the spectral surgical methods developed in~\cite{KenKurMal16,BerKenKur17,BerKenKur19}. We use them to prove two main bounds on the torsional rigidity: a lower and an upper bound based on the inradius of $\Graph$ and on its total length, respectively (Proposition~\ref{prop:estim-low-inr} and Theorem~\ref{thm:polya}): both bounds can be improved if $\Graph$ is known to be {simply connected (i.e., $\Graph$ is a tree graph)} or doubly connected, respectively.

We then turn to our most significant topic: the derivation of estimates
%upper and lower bounds ddd
on the {ground-state energy} of a quantum graph by means of its torsional rigidity. In Section~\ref{sec:spectralest} we  present an upper bound (Proposition~\ref{prop:polya-type}) whose domain counterpart goes back to Pólya (and which can be slightly modified to prove that the torsional rigidity also yields an upper estimate on the Cheeger constant of a metric graph); and a lower bound (Theorem~\ref{thm:kj-brasco-qg}), whose much more involved proof is  based on a rearrangement technique introduced by Kohler-Jobin and recently extended in~\cite{Bra14}. 

{ Finally, in Section~\ref{sec:torsland} we study a different but related topic: we discuss the possible use of the torsion function as a landscape function, elaborating on known results from \cite{BanCar01,GioSmi10,Ber12,FilMay12,Ste17} and extending them to the general setting of operators satisfying suitable forms of maximum principles on Banach lattices. Our observations about landscape functions are applied to metric graphs, but we offer an outlook to even more general settings.}

\begin{center}
\textsc{Acknowledgments}
\end{center}

\smallskip
We are grateful to Michiel van den Berg and Lorenzo Brasco
for stimulating discussions, useful comments and for pointing us at interesting references.

\medskip
\begin{center}
\textsc{Data availability statement}
\end{center}

\smallskip
All data generated or analysed during this study are included in this published article{.}

\section{Preliminaries and notation}\label{sec:basic}
Throughout this article let \(\Graph\) be a metric graph with edge set \(\mE=\mE_\Graph\) and vertex set \(\mV=\mV_\Graph\). We refer to~\cite{Mug19} for a precise introduction of the canonical structure of metric measure space induced on $\Graph$ by the Euclidean distance and the Lebesgue measure.

We impose the following assumption.

\begin{assumption}
\label{ass:graph}
The metric graph $\Graph$ is connected, i.e., there is a continuous path connecting any two points on the graph. It is compact and finite, i.e., it consists of finitely many edges of finite length.
\end{assumption}

 For an edge \(\me\in \mE\), let \(\ell_\me\) denote its length, and, for a vertex \(\mv\in \mV\), let \(\deg_\Graph(\mv)\) denote its degree, i.e.\ the number of edges incident in \(\mv\). Let \(\dist_\Graph:\Graph\times\Graph\rightarrow [0,\infty)\) denote the path metric on \(\Graph\). We suppose that \(\Graph\) has at least one vertex of degree \(1\). Let \({{\mVD}}=\mV_{{\mathrm{D}},\Graph}\) be a fixed subset of
	\[\{\mv\in\mV_\Graph~|~\deg_\Graph(\mv)=1\}\]
and let \({\mVN}:=\mV_{{\mathrm{N}},\Graph}=\mV\setminus {{\mVD}}\) be its complement in \(\mV\).  

Let \(\Delta_\Graph\) be the Laplacian on \(\Graph\) Dirichlet vertex condition in \({{\mVD}}\) and natural (i.e., the Kirchhoff-type condition
\begin{equation}\label{eq:kirchh}
\sum_{\me\in\mE_\mv}\frac{\partial u_\me}{\partial n}(\mv)=0,
\end{equation}
along with continuity across the vertices) vertex conditions on \({\mVN}\), i.e.\ its associated quadratic form \(a=a_{\Graph}\) is given by
	\[a_{\Graph}(u):=\int_{\Graph}|u'(x)|^2\mathrm{d}x=\sum_{\me\in E}\int_0^{\ell_\me}|u_\me'(x_\me)|^2\mathrm{d}x_\me\]
on the domain
	\[H^1_0(\Graph;{{\mVD}}):=\left\{u=(u_\me)_{\me\in \mE}\in\bigoplus_{\me\in \mE} H^1(0,\ell_\me)~\Big|~\begin{array}{l} u(\mv)=0\text{ for all }\mv\in {{\mVD}},\\ u\text{ is continuous in every }\mv\in {\mVN} \end{array}\right\}.\]
(Throughout this article, $u_\me$ denotes the restriction of $u$ to the edge $\me$.)
We refer to \({{\mVD}}\) as the set of \emph{Dirichlet vertices} of \(\Graph\) and to \({\mVN}\) as the set of \emph{natural vertices} of \(\Graph\).

\begin{definition}
Let $\Graph$ be a metric graph satisfying Assumptions~\ref{ass:graph} and let ${{\mVD}}$ be a non-empty subset of $\mV$. Then the \emph{torsion function} $v$ of $\Graph$ is the unique solution of
\begin{equation}\label{eq:polya0-bis}
\left\{
\begin{aligned}
-\Delta v(x)&=1,\quad &&x\in \Graph,\\
v(\mv)&=0,&&\mv\in {{\mVD}}.
\end{aligned}
\right.
\end{equation}
Additionally, we introduce the \emph{torsional rigidity}
\[
T(\Graph;{{\mVD}}):=\int_\Graph v(x)\ud x.
\]
\end{definition}

\begin{remark}
By definition, ${{\mVD}}$ is a set whose elements are all vertices of degree 1. At the same time, as far as the quadratic form $a_\Graph$ and hence the Laplacian spectrum are concerned, nothing changes if two or more Dirichlet vertices are glued, even though this certainly affects the topology of the underlying graph. To avoid confusion, we thus introduce the quantity
\[ \Diri(\Graph):=\sum\limits_{\mv\in{{\mVD}}}\deg(\mv),\]
which is invariant under gluing of Dirichlet vertices.
\end{remark}
 
At the danger or being redundant, we stress that 
the torsional rigidity $T(\Graph;{{\mVD}})$ does depend on the set ${{\mVD}}$ of vertices on which Dirichlet conditions are imposed; and that
the associated torsion function $v$ belongs to $D(\Delta_{\Graph})\subset C(\Graph)\cap \bigoplus_{\me\in \mE}C^1([0,\ell_\me])$ and satisfies the Dirichlet condition at the vertices in ${{\mVD}}$, hence in particular it belongs to $H^1_0(\Graph;{{\mVD}})$. If ${{\mVD}}$ is clear from the context, we will in the following write $H^1_0(\Graph)$ and $T(\Graph)$ instead of $H^1_0(\Graph;{{\mVD}})$ and $T(\Graph;{{\mVD}})$.

Pólya's proof of~\eqref{eq:polya-variat}  carries over verbatim to the case of compact metric graphs, leading to
\begin{equation}\label{eq:torsion-var}
T(\Graph)=
\sup_{u\in H^1_0(\Graph)} \frac{\left(\int_{\Graph} u\ud x\right)^2}{\|u'\|^2_{L^2}}=\max_{u\in H^1_0(\Graph)} \frac{\left(\int_{\Graph} u\ud x\right)^2}{\|u'\|^2_{L^2}}.
\end{equation}
in the following, we will refer to the term on the right hand side as \emph{Pólya quotient}.

 The maxima of this \emph{Pólya quotient}  in~\eqref{eq:torsion-var} form a one-dimensional space spanned by the torsion function $v$.

Since $\Graph$ is compact, and because $H^1_0(\Graph)$ is an ideal { of $H^1(\Graph)$ in the sense of Banach lattices, see \cite[Definition~2.19 and also Proposition~2.23]{Ouh05} (in particular, $u\in H^1_0(\Graph)$ implies $|u|\in H^1_0(\Graph)$)} and switching from $u$ to $|u|$ can only raise the quotient on the right hand side of \eqref{eq:torsion-var},  we find that 
\begin{equation}\label{eq:tors-graph-variat}
T(\Graph)=
\max_{u\in H^1_0(\Graph)} \frac{\left(\int_{\Graph} u\ud x\right)^2}{\|u'\|^2_{L^2}}=\max_{u\in H^1_0(\Graph)} \frac{\|u\|^2_{L^1}}{\|u'\|^2_{L^2}}.
\end{equation}
 
\begin{example}\label{exa:basic-tors}
In the 1-dimensional case the torsion function can be easily determined: if we let $a>0$ and consider the Poisson equation
\[
\left\{
\begin{aligned}
-v''(x)&=1,\qquad &&x\in (0,a),\\
-v'(0)&=\beta_0 v(0),\\
v'(a)&=\beta_1 v(a),
\end{aligned}
\right.
\]
with Robin boundary conditions, then the solution is given by
\begin{equation}\label{eq:robin}
v:(0,a)\ni x\mapsto 
-\frac{x^2}{2}
+\frac{\beta_0 a(a\beta_1-2)x}{2(a\beta_0 \beta_1-\beta_0-\beta_1)}
-\frac{a(a\beta_1-2)}{2(a\beta_0 \beta_1-\beta_0-\beta_1)}\in \R.
\end{equation}

(1) Let us now consider both relevant cases of a bounded interval of length $a$ with either mixed Dirichlet/Neumann conditions, or else with Dirichlet boundary conditions at both endpoints, by specializing~\eqref{eq:robin} to the cases of $\beta_0=\infty$ and, additionally,  either $\beta_1=\infty$ (both Dirichlet conditions) or $\beta_1=0$ (mixed Dirichlet/Neumann conditions).
 We will denote these configurations by $J_0$ and $J_1$, respectively, throughout this article.

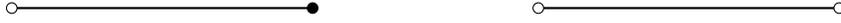
\begin{figure}[H]
\centering
\begin{tikzpicture}
	\coordinate (w0) at (0,0);
	\coordinate (w1) at (4,0);
		\draw[thick] (w0) -- (w1);
	\coordinate (w2) at (7,0);
	\coordinate (w3) at (11,0);
		\draw[thick] (w2) -- (w3);
		\draw[fill=white] (w0) circle (2pt);
		\draw[fill] (w1) circle (2pt);
		\draw[fill=white] (w2) circle (2pt);
		\draw[fill=white] (w3) circle (2pt);
\end{tikzpicture}
\medskip
\caption{The graph $J_0$ (left) and the graph $J_1$ (right); Dirichlet conditions are imposed at the white vertices.}\label{fig:J1}
\end{figure}

We immediately see that the torsion function on $J_0$ is
\[
v:(0,a)\ni x\mapsto \frac12 x\left(2a-x \right)\in \R
\]
and
\begin{equation}\label{eq:tors-1d-DN}
T(J_0)=\frac{a^3}{3};
\end{equation}
whereas the torsion function on $J_1$ is given by
\[
v:(0,a)\ni x\mapsto \frac{1}{2}x(a-x)\in \R.
\]
and 
%it is known, see e.g.\ \cite[Rem.~1.2]{BanBerCar02}, that
\begin{equation}\label{eq:tors-1d}
T(J_1)=\frac{a^3}{12}.
\end{equation}

We observe that {the maximum of the torsion function on $J_0$ (resp., on $J_1$) is $\frac{a^2}{2}$ (resp., $\frac{a^2}{8}$); however,}
 the product of the lowest Laplacian eigenvalue $\lambda_1$ with the maximum of the torsion function {is {scale invariant} and} satisfies
\begin{equation}\label{eq:l1vmax}
\lambda_1 \|v\|_\infty=\frac{\pi^2}{8}.
\end{equation}

{
Also, let us observe that the quotient of the ground state $\varphi=\sin(\frac{\pi }{a}\cdot)$ and the torsion function $v$ satisfies on $J_1$ the estimate
\[
%\frac{2}{a}\le 
\frac{\varphi(x)}{v(x)}\le \frac{8}{a^2}\qquad\hbox{for all }x\in (0,a).
\]
}

(2) If $\Omega\subset \R$ is a bounded but disconnected set, then its torsional rigidity is the sum of its connected components' torsional rigidity:  the torsional rigidity of such a disconnected domain is thus strictly lower than that of the interval with same total length.  The same is clearly true of disconnected graphs.

%We have needed in the proof of Theorem~\ref{thm:polya} that the torsional rigidity of an interval with one Neumann and one Dirichlet conditions is 
%\[
%\frac{2|\Graph|^3}{3}
%\]
%This is just a special case of the following more general fact: 
(3) Let  $\mathcal S_k$ be an equilateral metric star with Dirichlet conditions imposed on all vertices but the central one (or, equivalently, a pumpkin graph with Dirichlet conditions on one vertex and natural conditions on the other one). 

\begin{figure}[H]
{
\centering
\begin{tikzpicture}
	\coordinate (w0) at (0:1);
	\coordinate (w1) at (72:1);
	\coordinate (w2) at (144:1);
	\coordinate (w3) at (216:1);
	\coordinate (w4) at (288:1);
	\foreach \i in {0,1,2,3,4} {
		%\coordinate (w\i) at (120+30*\i:1);
		\draw[thick] (0,0) -- (w\i);
		}
		\draw[fill] (0,0) circle (1.75pt);
		\foreach \i in {0,1,2,3,4} {
		%\coordinate (w\i) at (60*\i:1);
		\draw[fill=white] (w\i) circle (1.75pt);
		}
\end{tikzpicture}
}\\[10pt]
\caption{A star graph with 5 edges: Dirichlet conditions are imposed at the white vertices.}\label{fig:star-graph}
\end{figure}
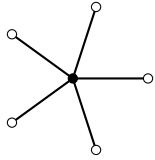

Let \(\me_1,\ldots,\me_k\) denote the edges of \(\mathcal S_k\). Each edge \(\me_j\) will be identified with the interval \([0,\frac{|\mathcal S_k|}{k}]\) where \(0\) corresponds to the degree \(1\) vertex and \(\frac{|\mathcal S_k|}{k}\) corresponds to the center vertex respectively. It is then immediate to check that the torsion function \(v\) on \(\mathcal S_k\) is given by
	\[v_{\me_j}(x_{\me_j})=\frac{1}{2}x_{\me_j}\left(\frac{2|\mathcal S_k|}{k}-x_{\me_j}\right)\]
for all \(x_{\me_j}\in [0,\frac{|\mathcal S_k|}{k}]\simeq \me_j\) and \(j=1,\ldots,k\). Thus, the torsional rigidity of \(\mathcal S_k\) is
\[
	T(\mathcal S_k)= \frac{k|\mathcal S_k|^3}{3k^3}= \frac{|\mathcal S_k|^3}{3k^2}.
\]

(4) Let $\Graph$ be a lasso graph consisting of a pendant edge $\me_1$ of length $\ell_1$ and a loop $\me_2$ of length $\ell_2$, see Figure~\ref{fig:lasso}, with Dirichlet conditions imposed on the vertex of degree 1.

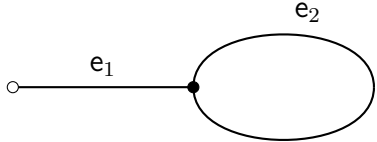
\begin{figure}[H]
\begin{tikzpicture}[scale=1.2]
\coordinate (a) at (0,0);
\coordinate (b) at (2,0);
\coordinate (c) at (4,0);
\draw[thick] (a) -- (b);
\draw[thick,bend left=90]  (b) edge (c);
\draw[thick,bend right=90]  (b) edge (c);
\draw[fill=white] (0,0) circle (1.75pt);
\draw[fill] (2,0) circle (1.75pt);
%\draw[fill] (4,0) circle (1.75pt);
%\node at (b) [anchor=west] {$v$};
%\node at (a) [anchor=north] {$v$};
%\node at (c) [anchor=east] {$z$};
\node at (1,0) [anchor=south] {$\me_1$};
\node at (3,0.6) [anchor=south west] {$\me_2$};
%\node at (3,-0.6) [anchor=north west] {$e_3$};
\end{tikzpicture}
\caption{The lasso graph}\label{fig:lasso}
\end{figure}
The torsion function $v$ is necessarily given by
\[
\begin{split}
v_{\me_1}(x_{\me_1})&=-\frac{x^2_{\me_1}}{2}+a_1x_{\me_1}+b_1,\qquad x_{\me_1}\in [0,\ell_1],\\
v_{\me_2}(x_{\me_2})&=-\frac{x^2_{\me_2}}{2}+a_2x_{\me_2}+b_2,\qquad x_{\me_2}\in [0,\ell_2]:
\end{split}
\]
imposing the boundary conditions allows us to determine the coefficients $a_1,a_2,b_1,b_2$, thus concluding that the torsion function is
\[
\begin{split}
v_{\me_1}(x_{\me_1})&=-\frac{x^2_{\me_1}}{2}+(\ell_1+\ell_2)x_{\me_1},\qquad x_{\me_1}\in [0,\ell_1],\\
v_{\me_2}(x_{\me_2})&=-\frac{x^2_{\me_2}}{2}+\frac{\ell_2}{2}x_{\me_2}+\frac{\ell_1}{2}\left(\ell_1+2\ell_2\right),\qquad x_{\me_2}\in [0,\ell_2]:
\end{split}
\]
unsurprisingly, the maximum
\[
\|v\|_\infty=v_{\me_2}\left(\frac{\ell_2}{2}\right)=\frac{\ell_1^2}{2}+\ell_1\ell_2 +\frac{\ell_2^2}{8}
\]
 is attained at the midpoint of $\me_2$, i.e., at the point of maximal distance from the Dirichlet vertex.
A direct computation shows that the torsional rigidity is
	\begin{equation}\label{eq:lasso-tors}
		T(\Graph) = \frac{1}{12}(\ell_1^3+\ell_2^3)+\frac{1}{4}\left(\ell_1+2\ell_2\right)^2\ell_1.
	\end{equation}
This procedure will be generalized in the next Section.
\end{example}

 \section{Reduction to a discrete problem}\label{sec:reduction}

Let us now show how the computation of a metric graph's torsion can be reduced to the computation of the solution of an algebraic system of $|\mV|-|{{\mVD}}|$ equations, {as sketched already in~\cite[Section~4]{ColKagMcd16}.} { In fact, we will see that the metric torsion and torsional rigidity can be rewritten via discrete counterparts assigned to a corresponding weighted combinatorial graph, see Definition~\ref{def:discrete-tors} below.}

\begin{proposition}\label{prop:discrete-system}
	Let \(v\) be the torsion function of \(\Graph\). Then the restriction \(g:=v_{|\mV}:\mV\rightarrow \mathbb R\) is the unique solution of the system
	\begin{equation}\label{eq:discr-tors}
	\left\{\begin{aligned}
		g(\mv) & = 0, \quad &\mv\in{{\mVD}},\\[.1cm]
		\displaystyle\frac{1}{d_\mv^\ell}\sum_{\me=\{\mv,\mw\}\in \mE_\mv} \frac{g(\mv)-g(\mw)}{\ell_\me} & =\displaystyle\frac{1}{2}, & \mv\in{\mVN},
	\end{aligned}\right.
	\end{equation}
	where \(d_\mv^\ell:=\sum_{\me\in\mE_\mv}\ell_\me\) denotes the metric degree of the vertex \(\mv\).
\end{proposition}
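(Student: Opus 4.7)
The plan is to translate the vertex condition at each natural vertex into an algebraic identity for $g=v_{|\mV}$ by solving the ODE $-v''=1$ explicitly on each edge.

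\textbf{Step 1 (Edge-by-edge description of $v$).} Since $-v_\me''=1$ on each edge $\me$ of length $\ell_\me$, the restriction $v_\me$ is necessarily a quadratic polynomial. Fixing, for each edge $\me=\{\mv,\mw\}$, a parametrization $[0,\ell_\me]\to\me$ so that $0\mapsto \mv$ and $\ell_\me\mapsto\mw$, I may write
\[
v_\me(x_\me)=-\frac{x_\me^2}{2}+a_\me x_\me+b_\me,\qquad x_\me\in[0,\ell_\me],
\]
for constants $a_\me,b_\me\in\R$ which are determined by the vertex values: continuity of $v$ at the endpoints gives $b_\me=v(\mv)=g(\mv)$ and $-\frac{\ell_\me^2}{2}+a_\me\ell_\me+b_\me=v(\mw)=g(\mw)$, whence
\[
a_\me=\frac{g(\mw)-g(\mv)}{\ell_\me}+\frac{\ell_\me}{2}.
\]

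\textbf{Step 2 (Translating Kirchhoff into the discrete system).} At a natural vertex $\mv\in{\mVN}$, the outgoing derivative along an incident edge $\me=\{\mv,\mw\}$ (parametrized so that $\mv$ corresponds to $x_\me=0$) equals $v_\me'(0)=a_\me$. The Kirchhoff condition \eqref{eq:kirchh} thus reads
\[
0=\sum_{\me=\{\mv,\mw\}\in\mE_\mv}\left(\frac{g(\mw)-g(\mv)}{\ell_\me}+\frac{\ell_\me}{2}\right),
\]
which, upon rearranging and dividing by $d_\mv^\ell=\sum_{\me\in\mE_\mv}\ell_\me$, is precisely the second equation in \eqref{eq:discr-tors}. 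The first equation in \eqref{eq:discr-tors} is nothing but the Dirichlet condition $v(\mv)=0$ for $\mv\in{{\mVD}}$. Hence $g=v_{|\mV}$ solves the system. (A loop at $\mv$, which should be counted twice in $\mE_\mv$ as it carries two half-edges at $\mv$, contributes $\ell_\me$ to the Kirchhoff balance, matching $2\cdot\frac{\ell_\me}{2}$ on the right-hand side; so the formula extends to graphs with loops without change.)

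\textbf{Step 3 (Uniqueness).} Uniqueness of the solution to \eqref{eq:discr-tors} follows from uniqueness of the torsion function: any solution $g$ of \eqref{eq:discr-tors} can be used to define a function $\tilde v$ edgewise via $\tilde v_\me(x_\me):=-\frac{x_\me^2}{2}+a_\me x_\me+g(\mv)$ with $a_\me$ chosen as in Step~1. By construction $\tilde v$ is continuous on $\Graph$, satisfies $-\tilde v''=1$ and the Dirichlet condition on ${{\mVD}}$, and the reverse of the computation in Step~2 shows that $\tilde v$ obeys the Kirchhoff condition at every $\mv\in{\mVN}$. Thus $\tilde v\in D(\Delta_\Graph)$ solves \eqref{eq:polya0-bis}, so $\tilde v=v$ and $g=v_{|\mV}$. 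Alternatively, one can argue directly that the homogeneous version of \eqref{eq:discr-tors} (with right-hand side $0$) admits only the trivial solution, since it corresponds to the invertibility of the Dirichlet Laplacian on the discretized graph; but the semigroup-theoretic argument via uniqueness of $v$ is cleaner.

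There is no real obstacle here beyond bookkeeping: the whole statement is essentially a systematic recording of the boundary/transmission conditions after explicit edgewise integration. The only delicate point is the correct counting of half-edges in $\mE_\mv$ so that loops are handled properly; this is already implicit in the conventions underlying the definition of $d_\mv^\ell$.
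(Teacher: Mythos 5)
Your proof is correct and takes essentially the same route as the paper: explicit edgewise integration of $-v''=1$ to express $v_\me$ as a quadratic determined by the vertex values, followed by substitution into the Kirchhoff condition at each natural vertex to obtain the discrete system. The only difference is that you additionally spell out uniqueness (Step 3) by reconstructing a candidate torsion function from any solution of \eqref{eq:discr-tors} and invoking uniqueness of the continuum torsion function, whereas the paper leaves this implicit and effectively subsumes it under the later observation that the discrete operator $\mathcal L$ is positive definite, hence invertible.
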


{
In view of~\eqref{eq:torsion-edgewise} below, Proposition~\ref{prop:discrete-system} shows that solving~\eqref{eq:discr-tors} immediately delivers a closed-form expression for the torsion function. Albeit elementary, this observation seems to be novel: for example, the authors of~\cite{HarMal20} regret that

\begin{quote}
\emph{
[...] except in rare cases of high symmetry, the torsion function [...] [is] only
computationally known.}
\end{quote}
See Example~\ref{exa:coll-tors-comput} for a class of (rather asymmetrical) graphs whose (discrete and, hence, also continuous) torsion functions can be  easily computed.
}

\begin{proof}
	Since the torsion function \(v\) solves the equation \(-v_\me''=1\) edgewise, it satisfies
	\begin{equation}\label{eq:torsion-edgewise}	
		v_\me(x_\me)=\frac{1}{2}(\ell_\me-x_\me)x_\me+\frac{v_\me(\ell_\me)-v_\me(0)}{\ell_\me}x_\me+v_\me(0)\qquad\hbox{for all }x_\me\in [0,\ell_\me].
	\end{equation}
Then
\begin{equation}\label{eq:torsion-edgewise-deriv}	
v_\me'(x_\me)=\frac{\ell_\me-2x_\me}{2}+\frac{v_\me(\ell_\me)-v_\me(0)}{\ell_\me},\qquad x_\me\in [0,\ell_\me],
\end{equation}
	holds. Therefore, the natural vertex conditions imposed on the elements  \(\mv\in\mV\setminus {{\mVD}}\) yield for the restriction of $v$ to ${\mVN}$
	\begin{align*}
		0 & = \sum_{\me=\{\mv,\mw\}\in\mE_\mv}-\frac{\ell_\me}{2}+\frac{f(\mv)-f(\mw)}{\ell_\me}\\
		& = -\frac{d_\mv^\ell}{2} + \sum_{\me=\{\mv,\mw\}\in \mE_\mv} \frac{f(\mv)-f(\mw)}{\ell_\me}
	\end{align*}
	This concludes the proof.
\end{proof}

We can interpret the left-hand-side of the system considered in Proposition \ref{prop:discrete-system} as a self-adjoint operator in a weighted \(\ell^2\)-space. First, let \({\mEN}\) denote the set of edges in \(\mE\) whose  incident vertices \(\mv,\mw\) are both elements of ${\mVN}$, and let ${\mED}$ denote the set of edges where at least one incident vertex is a Dirichlet vertex, i.e., \({\mED}:=\mE\setminus {\mEN}\). We consider vertex and edge weights \(m, \mu\) defined as follows: 
	\begin{align*}
		m(\mv) =d_\mv^\ell,\quad\mv\in{\mVN},\qquad\hbox{and}\qquad \mu(\me) =\frac{1}{\ell_\me},\quad \me\in{\mEN}.
	\end{align*}
These weights have extensively discussed in \cite{KosMalNei17,ExnKosMal18,Plu21}, in the context of spectral theory of quantum graphs. {We refer to $\mG=\mG_{m,\mu}$ constructed above as the \emph{weighted combinatorial graph underlying} the metric graph $\Graph$.}
	
	Furthermore, we consider the non-negative discrete potential \(c:{\mVN}\rightarrow [0,\infty)\) defined by
	\begin{align*}
		c(\mv) = \sum_{\me\in \mE_\mv\cap{\mED}}\frac{1}{\ell_\me},\quad \mv\in{\mVN}.
	\end{align*}
	On the weighted Hilbert space  \(\ell^2_m({\mVN})\) of square summable functions \(f:{\mVN}\rightarrow \mathbb C\) whose inner product is given by
		\[(f,g)_{\ell^2_m({\mVN})}:=\sum\limits_{\mv\in{\mVN}}m(\mv)\overline{f(\mv)}g(\mv),\qquad f,g:{\mVN}\to \C,
		\]
we consider the quadratic form \(q=q_{m,\mu,c}\) given by
		\[q(f)=\sum_{\me=\{\mv,\mw\}\in{\mEN}}\mu(\me)|f(\mv)-f(\mw)|^2+\sum\limits_{\mv\in{\mVN}}c(\mv)|f(\mv)|^2,\quad f\in\ell^2_m({\mVN}).\]
	The positive definite (in particular, invertible), self-adjoint operator \(\mathcal L\) on $\ell_m^2({\mVN})$ associated with \(q\)  is given by
	\begin{equation}\label{eq:discrete-operator}
		(\mathcal L f)(\mv)=\frac{1}{m(\mv)}\left[\sum_{\me=\{\mv,\mw\}\in\mE_\mv\cap{\mEN}}\mu(\me)(f(\mv)-f(\mw))+c(\mv)f(\mv)\right],\quad f\in\ell_m^2({\mVN}),\ \mv\in{\mVN}.
	\end{equation}
The following result is an immediate consequence of Proposition \ref{prop:discrete-system}.
	\begin{corollary}\label{cor:discrete-torsion}
		If \(v\) is the torsion function of \(\Graph\), then \(v_{|{\mVN}}=\frac{1}{2}\mathcal L^{-1}\1_{{\mVN}}\)
		 holds.
	\end{corollary}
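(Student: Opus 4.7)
The plan is to directly verify that the restriction $v_{|{\mVN}}$ satisfies the equation $\mathcal L f = \tfrac12 \1_{\mVN}$. Since Proposition~\ref{prop:discrete-system} already tells us which algebraic identities the values $g(\mv):=v(\mv)$, $\mv\in\mV$, satisfy, and since $\mathcal L$ has been shown above to be positive definite (hence invertible) on $\ell^2_m({\mVN})$, the claim will follow at once.

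More concretely, for each $\mv\in{\mVN}$ I would split the sum $\sum_{\me=\{\mv,\mw\}\in\mE_\mv}\frac{g(\mv)-g(\mw)}{\ell_\me}$ appearing in the second line of~\eqref{eq:discr-tors} into the contribution coming from edges in ${\mEN}$ and that coming from edges in ${\mED}$. For an edge $\me=\{\mv,\mw\}\in\mE_\mv\cap{\mED}$ one has $\mw\in{{\mVD}}$ (by definition of ${\mED}$ and since $\mv\in{\mVN}$), whence $g(\mw)=0$ by the first line of~\eqref{eq:discr-tors}; that contribution therefore collapses to $\sum_{\me\in\mE_\mv\cap{\mED}}\frac{1}{\ell_\me}g(\mv)=c(\mv)g(\mv)$, which is precisely the potential term in the definition~\eqref{eq:discrete-operator} of $\mathcal L$. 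The remaining sum over edges in ${\mEN}$ becomes $\sum_{\me=\{\mv,\mw\}\in\mE_\mv\cap{\mEN}}\mu(\me)(g(\mv)-g(\mw))$, which matches the Laplacian-like term of $\mathcal L$.

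Multiplying the second equation of~\eqref{eq:discr-tors} through by $m(\mv)=d_\mv^\ell$ and identifying the two pieces above with the bracket in~\eqref{eq:discrete-operator}, I conclude $(\mathcal L v_{|{\mVN}})(\mv)=\tfrac12$ for every $\mv\in{\mVN}$, i.e.\ $\mathcal L v_{|{\mVN}}=\tfrac12 \1_{\mVN}$. Invertibility of $\mathcal L$ then yields the stated identity. There is no real obstacle here: the statement is essentially a bookkeeping reformulation of Proposition~\ref{prop:discrete-system}, and the only point to be careful about is the correct splitting ${\mEN}\sqcup{\mED}$ together with the observation that the Dirichlet values drop out to produce exactly the potential $c$.
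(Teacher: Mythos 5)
Your proof is correct and is precisely the bookkeeping the paper has in mind: the corollary is stated in the paper as an ``immediate consequence'' of Proposition~\ref{prop:discrete-system} without further detail, and your splitting of $\mE_\mv$ into $\mE_\mv\cap{\mEN}$ and $\mE_\mv\cap{\mED}$, with the Dirichlet values dropping out to produce the potential $c(\mv)$, is exactly how one identifies the discrete system with $\mathcal L g = \tfrac12\1_{{\mVN}}$. (A minor stylistic point: you need not multiply through by $m(\mv)$ at all, since $\mathcal L$ already carries the prefactor $\tfrac{1}{m(\mv)}$; the second line of~\eqref{eq:discr-tors} \emph{is} $(\mathcal L g)(\mv)=\tfrac12$ once the two pieces of the sum are identified.)
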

	{
	\begin{definition}\label{def:discrete-tors}
Given a metric graph $\Graph$, we will refer to \(g:=\mathcal L^{-1}\1_{{\mVN}}\) as the \emph{discrete torsion function} of the underlying weighted combinatorial graph \(\mG\). Its \(\ell_m^1\)-norm will be called the \emph{discrete torsional rigidity}
	\begin{equation}\label{eq:tors-discrtors}
		\torsd(\mG):=\sum\limits_{\mv\in\mV}m(\mv)g(\mv)=\left\|\mathcal L^{-1}\1_{{\mVN}}\right\|_{\ell_m^1({\mVN})}.
	\end{equation}
	\end{definition}
}
As in the continuous case, it is straightforward to derive the variational characterization
\begin{equation}
	\torsd(\mG) = 
	\sup_{f\in \ell^2_m({\mVN})}\frac{\left(\sum\limits_{\mv\in{\mVN}}m(\mv)f(\mv)\right)^2}{q(f)}=
	\max_{f\in \ell^2_m({\mVN})}\frac{\left(\sum\limits_{\mv\in{\mVN}}m(\mv)|f(\mv)|\right)^2}{q(f)}
\end{equation}
where the discrete torsion function is -- up to rescaling -- the unique maximizer of the functional appearing on the right-hand-side.

\begin{lemma}\label{lem:positivity-of-discrete-torsion}
	The discrete torsion function \(g=\mathcal L^{-1}\1_{{\mVN}}\) is strictly positive on \({\mVN}\).
\end{lemma}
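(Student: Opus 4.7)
The plan is to translate the claim into a statement about the continuous torsion function $v$ of $\Graph$ and then to establish the positivity of $v$ at every natural vertex by combining a variational argument with an edgewise Hopf-type contradiction. Indeed, Corollary~\ref{cor:discrete-torsion} already tells us that $g = 2\, v_{|\mVN}$, so it suffices to show that $v(\mv) > 0$ for every $\mv \in \mVN$.

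First I would establish non-negativity $v \geq 0$ on all of $\Graph$. The variational characterization~\eqref{eq:tors-graph-variat} realizes $v$ as a maximizer of the Pólya quotient $u \mapsto \|u\|_{L^1}^2/\|u'\|_{L^2}^2$ in $H^1_0(\Graph;\mVD)$, and the remark immediately following~\eqref{eq:torsion-var} tells us that the set of such maximizers is one-dimensional. Because $H^1_0(\Graph;\mVD)$ is a lattice ideal of $H^1(\Graph)$, the function $|v|$ lies in $H^1_0(\Graph;\mVD)$ and realises the same Pólya quotient as $v$; by the uniqueness up to scaling, $|v|$ and $v$ must be scalar multiples, hence $v$ has constant sign on $\Graph$. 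Testing $-\Delta v = 1$ against $v$ itself gives $\|v'\|_{L^2}^2 = \int_\Graph v\,\ud x \geq 0$, which fixes the sign and forces $v \geq 0$.

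Second, I would upgrade this to strict positivity on $\mVN$ by a boundary-point argument. Suppose, towards a contradiction, that $v(\mv_0) = 0$ for some $\mv_0 \in \mVN$. Since $v \geq 0$ everywhere, each incident edge $\me \in \mE_{\mv_0}$, parametrized so that $\mv_0$ corresponds to $x_\me = 0$, satisfies $v_\me \geq 0 = v_\me(0)$ on $[0,\ell_\me]$, hence $v_\me'(0) \geq 0$. The Kirchhoff-type condition~\eqref{eq:kirchh} forces the sum of these non-negative outward derivatives to vanish, so each of them is individually zero. But then, on each such edge, $v_\me$ solves the initial value problem $-v_\me'' = 1$ with $v_\me(0) = v_\me'(0) = 0$, whose unique solution is $v_\me(x_\me) = -x_\me^2/2$, which is strictly negative for $x_\me > 0$: this contradicts $v \geq 0$. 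Hence $v > 0$ on $\mVN$, as required.

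The step requiring the most care is the sign-determination in the second paragraph: one must invoke the lattice-ideal structure of $H^1_0(\Graph;\mVD)$ together with the one-dimensionality of the space of Pólya maximizers, and combine them without confusing \emph{``$v$ is, up to scaling, a maximizer''} with \emph{``$v$ itself is the unique maximizer''}. The closing Hopf-type argument is, by contrast, almost mechanical thanks to the explicit second-order ODE satisfied by $v$ on each edge, so no discrete maximum principle or $M$-matrix machinery is actually needed; a purely discrete proof via $\mathcal{L}$ being an irreducible $M$-matrix would also work but would obscure the role of the natural vertex conditions that drive the contradiction.
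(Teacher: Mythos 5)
Your proof is correct, but it takes a genuinely different route from the paper and, in effect, reverses the paper's logical order. The paper gives a short, purely \emph{discrete} argument: it takes a minimizing vertex $\mv_{\min}$ of $g$ and reads off directly from $\mathcal L g = \1_{\mVN}$ that $c(\mv_{\min})\,g(\mv_{\min}) = m(\mv_{\min}) + \sum_{\me\in\mE_{\mv_{\min}}\cap\mEN}\mu(\me)\bigl(g(\mw)-g(\mv_{\min})\bigr) > 0$, which forces $g(\mv_{\min})>0$; it then \emph{deduces} positivity of $v$ on $\Graph\setminus\mVD$ (Corollary~\ref{cor:positivity-of-torsion}) from the positivity of $g$ combined with the edgewise formula~\eqref{eq:torsion-edgewise}. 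You instead prove the continuous statement first -- sign of $v$ via the lattice-ideal structure and one-dimensionality of the Pólya maximizers, then strict positivity at natural vertices via the Kirchhoff/Hopf contradiction using $v_\me(x)=-x_\me^2/2$ -- and read off $g>0$ through Corollary~\ref{cor:discrete-torsion}. Both are valid and non-circular, since your continuous argument relies only on \eqref{eq:torsion-var}--\eqref{eq:tors-graph-variat} and the regularity $v\in D(\Delta_\Graph)$. The paper's proof is shorter and emphasizes that $\mathcal L$ behaves like an irreducible $M$-matrix; yours is longer but makes transparent how the Kirchhoff conditions and the source term $\1$ conspire to preclude an interior zero, and it is in spirit the argument one would also run for positivity on domains. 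One small nit: when invoking one-dimensionality of the maximizer set, it is cleanest to observe that $(\int_\Graph |v|)^2 \ge (\int_\Graph v)^2$ with equal Dirichlet energy, so $|v|$ also maximizes the quotient in~\eqref{eq:torsion-var}; the rest of your sign argument then goes through as you wrote it.
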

\begin{proof}
	Let \(\mv_{\min}\) be a vertex in \({\mVN}\) with \(g(\mv_{\min})\leq g(\mv)\) for all \(\mv\in{\mVN}\). Then, by definition of \(g\), we have
		\[c(\mv_{\min})g(\mv_{\min})=m(\mv_{\min})+\sum_{\me=\{\mv,\mw\}\in\mE_\mv\cap{\mEN}}\mu(\me)(g(\mw)-g(\mv_{\min}))>0.\]
	Since \(c\) is non-negative on \({\mVN}\), we obtain \(g(\mv_{\min})>0\) and thus \(g\) is strictly positive on \({\mVN}\).
	%\footnote{\DM{Eine alternative Beweisstrategie lautet: bekanntlich erzeugt $-\mathcal L$ eine positive irreduzible Halbgruppe, also hat sie positive irreduzible Resolventen $(\lambda+\mathcal L)^{-1}$ für alle $\lambda>s(-\mathcal L)$, somit ist $\mathcal L^{-1}\1_{{\mVN}}$ strikt positiv.	Zugegeben: Wie auf Spatzen mit Kanon schießen. Vielleicht hilft es aber, wenn man komplizierter Operatoren und/oder andere Randbedingungen betrachtet. Nimm das zur Kenntnis, dann kannst Du  die Fußnote aber auch herauskommentieren.}}
\end{proof}
As an immediate consequence of \eqref{eq:torsion-edgewise} and Lemma \ref{lem:positivity-of-discrete-torsion} we obtain:
\begin{corollary}\label{cor:positivity-of-torsion}
	The torsion function \(v\) on \(\mathcal G\) is strictly positive on \(\mathcal G\setminus{{\mVD}}\).
\end{corollary}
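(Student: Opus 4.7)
The plan is to assemble the statement from two ingredients that are now at hand: strict positivity of the discrete torsion at all natural vertices (Lemma~\ref{lem:positivity-of-discrete-torsion}, via Corollary~\ref{cor:discrete-torsion}) and the explicit edgewise formula~\eqref{eq:torsion-edgewise} for $v$ on every individual edge. The whole argument is little more than a careful rewriting: no maximum principle on $\Graph$ is invoked directly, only the discrete one that has already produced Lemma~\ref{lem:positivity-of-discrete-torsion}.

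First I would handle the vertex values. By Corollary~\ref{cor:discrete-torsion} one has $v(\mv)=\frac{1}{2}g(\mv)$ for every $\mv\in\mVN$, where $g=\mathcal{L}^{-1}\1_{\mVN}$; Lemma~\ref{lem:positivity-of-discrete-torsion} then gives $v(\mv)>0$ for every natural vertex $\mv$. Combined with the Dirichlet condition $v(\mv)=0$ on $\mVD$, this already shows $v\geq 0$ on all of $\mV$, and in fact $v>0$ on $\mVN$.

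Next I would pass from vertices to the interiors of the edges. Fix any edge $\me=\{\mv,\mw\}$ parametrized as $[0,\ell_\me]$, and rewrite the affine part of~\eqref{eq:torsion-edgewise} as a convex combination of the endpoint values:
\[
v_\me(x_\me)=\frac{1}{2}(\ell_\me-x_\me)x_\me+\left(1-\frac{x_\me}{\ell_\me}\right)v_\me(0)+\frac{x_\me}{\ell_\me}v_\me(\ell_\me),\qquad x_\me\in[0,\ell_\me].
\]
By the previous paragraph, both $v_\me(0)$ and $v_\me(\ell_\me)$ are $\geq 0$ (each endpoint of $\me$ is either a Dirichlet vertex, where $v=0$, or a natural vertex, where $v>0$), so the convex combination is non-negative on the whole edge. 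The first summand $\tfrac12(\ell_\me-x_\me)x_\me$ is strictly positive for every $x_\me\in(0,\ell_\me)$. Hence $v_\me(x_\me)>0$ on the open edge.

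Combining the two steps yields $v>0$ on $\Graph\setminus\mVD$, which is the claim. There is no real obstacle; the only point to be slightly careful about is that the convex-combination rewriting requires the endpoint values of $v$ to be non-negative, which is exactly what Lemma~\ref{lem:positivity-of-discrete-torsion} and the Dirichlet boundary condition supply.
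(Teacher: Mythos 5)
Your argument is correct and matches the paper's: the paper also presents this corollary as an immediate consequence of the edgewise formula~\eqref{eq:torsion-edgewise} together with Lemma~\ref{lem:positivity-of-discrete-torsion}, and your convex-combination rewriting of the affine part of~\eqref{eq:torsion-edgewise} is exactly the right way to make that implication explicit. Nothing is missing.
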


Let us now collect a few properties of the torsion function, for the sake of future reference.

\begin{lemma}\label{lem:no-local-min}
The torsion function is a Lipschitz continuous over the metric space $\Graph$. It is edgewise strictly concave and has no local minima outside ${{\mVD}}$.\end{lemma}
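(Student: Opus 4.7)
The plan is to treat the three assertions separately, all following from the explicit edgewise representation \eqref{eq:torsion-edgewise}–\eqref{eq:torsion-edgewise-deriv} together with continuity across vertices and the Kirchhoff condition.

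\textbf{Lipschitz continuity.} Since the torsion function belongs to $D(\Delta_\Graph)\subset C(\Graph)\cap \bigoplus_{\me\in\mE}C^1([0,\ell_\me])$, it is continuous at every vertex and continuously differentiable on each edge. From \eqref{eq:torsion-edgewise-deriv} one has, on the edge $\me=\{\mv,\mw\}$ identified with $[0,\ell_\me]$,
\[
|v_\me'(x_\me)|\le \frac{\ell_\me}{2}+\frac{|v(\mw)-v(\mv)|}{\ell_\me},\qquad x_\me\in[0,\ell_\me],
\]
so $v_\me$ is Lipschitz on $\me$. Since $\Graph$ has only finitely many edges, the supremum $L$ of these edgewise Lipschitz constants is finite, and continuity at vertices allows to chain estimates along any path: for $x,y\in\Graph$ and any path $\gamma$ joining them of length $|\gamma|$ one obtains $|v(x)-v(y)|\le L|\gamma|$, hence $|v(x)-v(y)|\le L\,\mathrm{dist}_\Graph(x,y)$.

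\textbf{Edgewise strict concavity.} On each edge $\me$, the torsion function satisfies $-v_\me''\equiv 1$, so $v_\me''\equiv -1<0$ pointwise on $(0,\ell_\me)$, which is the defining property of strict concavity of $v_\me$.

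\textbf{Absence of local minima outside ${{\mVD}}$.} There are two cases. If $x_0$ lies in the interior of an edge $\me$, then a local minimum of $v$ at $x_0$ would yield $v_\me''(x_0)\ge 0$, contradicting $v_\me''(x_0)=-1$. Suppose instead that $\mv\in{\mVN}$ is a local minimum. Then on each edge $\me\in\mE_\mv$, sufficiently close to $\mv$ one has $v_\me\ge v(\mv)$, so the outgoing (i.e.\ into the edge) derivative $\tfrac{\partial v_\me}{\partial n}(\mv)$ is non-negative. The Kirchhoff condition \eqref{eq:kirchh} forces the sum of these non-negative numbers to vanish, hence $\tfrac{\partial v_\me}{\partial n}(\mv)=0$ for every $\me\in\mE_\mv$. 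Parameterizing $\me$ so that $\mv$ corresponds to $0$, this means $v_\me'(0)=0$; combined with $v_\me''\equiv-1$ this yields $v_\me'(x_\me)=-x_\me<0$ for $x_\me\in(0,\ell_\me]$, so $v_\me$ is strictly decreasing away from $\mv$, contradicting the assumption that $\mv$ is a local minimum.

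The only genuinely delicate step is the vertex case, where one has to combine strict concavity with the Kirchhoff condition; the rest is immediate from the explicit representation of $v$ on each edge.
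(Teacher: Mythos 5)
Your proof is correct and follows the same overall strategy as the paper's: edgewise strict concavity from $-v_\me''\equiv 1$, ruling out interior local minima by the second-derivative test and vertex local minima via the Kirchhoff condition, and Lipschitz continuity by chaining edgewise Lipschitz bounds along paths. Two genuine differences are worth flagging. For the Lipschitz claim, the paper proves a separate general lemma (Lemma~\ref{lem:lip}) asserting that $C(\Graph)\cap\bigoplus_{\me\in\mE} H^2(0,\ell_\me)$ embeds continuously into $\mathrm{Lip}(\Graph)$, and then applies it to $v$; you instead read the edgewise Lipschitz constant directly off the explicit representation \eqref{eq:torsion-edgewise-deriv}. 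Your route is more self-contained, but it is specific to the torsion function, whereas the paper's lemma is reused elsewhere (e.g.\ for eigenfunctions and the heat kernel in Remark~\ref{rem:geo-infin}). For the vertex case, the paper asserts directly that a local minimum at $\mv\in\mVN$ would make all outgoing derivatives strictly positive and thereby violate Kirchhoff; you argue instead that they are $\ge 0$, that Kirchhoff then forces each to vanish, and that $v_\me'(0)=0$ together with $v_\me''\equiv-1$ yields a strict decrease along each edge, contradicting the local minimum. The two arguments are equivalent, but yours makes explicit the role of strict concavity in upgrading $\ge 0$ to a contradiction — a step the paper leaves implicit behind the phrase ``strictly monotonically increasing.''
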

\begin{proof}
Because the torsion function satisfies edgewise $-v_\me''=\1$, and because it is twice (in fact, infinitely often) differentiable in the interior of each edge, we conclude from~\eqref{eq:torsion-edgewise}	 that
it is edgewise a strictly concave function;
%In view of~\eqref{eq:torsion-edgewise}	we deduce that
%	
%However, 
%	\[v_\me''(x_\me)=-1,
%		\]
any critical point of $v$ in the interior of an edge would then necessarily be a \emph{strict} local maximum, rather than a minimum. 

Would a local minimum of $v$ occur instead at a vertex $\mv\in \mV$, then $v$ would be strictly monotonically increasing, in a neighborhood of $\mv$, along all outgoing edges: hence the normal derivatives at $\mv$ would all be strictly positive: a contradiction to the fact that $v$ lies in the domain of $\Delta_{\Graph}$ and, thus, satisfies the Kirchhoff-type condition.

Finally, it follows from Lemma~\ref{lem:lip} below that 
$v$ is Lipschitz continuous.
\end{proof}

{In order to complete the proof of Lemma~\ref{lem:no-local-min}, we need the following fact: it is probably already known, but we could not find it in the literature.
\begin{lemma}\label{lem:lip}
Under the Assumptions~\ref{ass:graph}, the space
\[
C(\Graph)\cap \bigoplus_{\me\in\mE}H^2(0,\ell_\me)
\]
is continuously embedded in the space $\mathrm{Lip}(\Graph)$ of Lipschitz continuous functions over the metric space $\Graph$.
\end{lemma}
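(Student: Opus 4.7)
The plan is to reduce the claim to a Sobolev embedding on each individual edge and then ``glue'' the edgewise Lipschitz bounds into a global one, exploiting the continuity across vertices and the finiteness of the edge set.

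First, I would invoke the classical one-dimensional Sobolev embedding: for each edge $\me\in\mE$, one has $H^2(0,\ell_\me)\hookrightarrow C^1([0,\ell_\me])$ with an embedding constant $C_\me$ depending only on $\ell_\me$. In particular, for any $u\in\bigoplus_{\me\in\mE} H^2(0,\ell_\me)$,
\[
\|u_\me'\|_{L^\infty(0,\ell_\me)}\le C_\me\,\|u_\me\|_{H^2(0,\ell_\me)}.
\]
Since $u_\me$ is then in $C^1([0,\ell_\me])$, it is Lipschitz on the edge $\me$ (parametrized as $[0,\ell_\me]$) with Lipschitz constant bounded by $\|u_\me'\|_{L^\infty}$. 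By Assumption~\ref{ass:graph} the edge set $\mE$ is finite, so the quantity $L:=\max_{\me\in\mE}\|u_\me'\|_{L^\infty(0,\ell_\me)}$ is finite and satisfies $L\le(\max_{\me\in\mE}C_\me)\,\|u\|_{\bigoplus_{\me\in\mE}H^2(0,\ell_\me)}$.

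Next, I would promote this edgewise bound to a global Lipschitz estimate with respect to the path metric $\dist_\Graph$. Let $x,y\in\Graph$ and fix an arbitrary rectifiable path $\gamma$ from $x$ to $y$. Using the structure of a metric graph, $\gamma$ decomposes into finitely many consecutive pieces $[p_i,p_{i+1}]$, each contained in a single edge, with $p_0=x$, $p_n=y$, and the intermediate breakpoints $p_1,\dots,p_{n-1}$ being vertices. Continuity of $u$ at these vertices (i.e.\ the assumption $u\in C(\Graph)$) ensures that the values $u(p_i)$ are unambiguously defined, and the edgewise Lipschitz bound gives
\[
|u(x)-u(y)|\;\le\;\sum_{i=0}^{n-1}|u(p_i)-u(p_{i+1})|\;\le\;L\sum_{i=0}^{n-1}|p_i-p_{i+1}|\;=\;L\cdot\mathrm{length}(\gamma).
\]
Passing to the infimum over all such paths $\gamma$ yields $|u(x)-u(y)|\le L\cdot\dist_\Graph(x,y)$, i.e.\ $u\in\mathrm{Lip}(\Graph)$ with Lipschitz seminorm at most $L$.

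Finally, for the continuity of the embedding, observe that the $\mathrm{Lip}$-norm is controlled by the sum of $\|u\|_{C(\Graph)}$ (which is finite by assumption on $u$ and by compactness of $\Graph$) and the Lipschitz seminorm, each of which is dominated by the natural norm on $C(\Graph)\cap\bigoplus_{\me\in\mE}H^2(0,\ell_\me)$. I do not foresee any real obstacle: the only subtle point is that the concatenation argument across internal vertices genuinely requires $u\in C(\Graph)$, which is why the claim would fail for a merely edgewise $H^2$-function without the continuity hypothesis.
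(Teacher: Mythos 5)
Your proof takes essentially the same route as the paper: control $\|u'\|_{L^\infty}$ edgewise via the one-dimensional Sobolev embedding $H^2\hookrightarrow W^{1,\infty}$, then integrate $u'$ along a connecting path and take the infimum over paths to pass from edgewise to global Lipschitz continuity using $u\in C(\Graph)$. The paper phrases the path step as a single line integral $|u(x)-u(y)|=\left|\int_\gamma u'\,\ud t\right|\le L(\gamma)\|u'\|_\infty$ while you spell out the decomposition into edge segments, but the content is identical.
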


\begin{proof}
%Let $u\in C(\Graph)\cap \bigoplus_{\me\in\mE}H^2(0,\ell_\me)$. Because $u$ lies edgewise in $H^2\hookrightarrow W^{1,\infty}$, it is  Lipschitz continuous -- say with Lipschitz constant $L_\me$ -- on each edge $\me$: it is hence sufficient to check its Lipschitz continuity in the neighborhood of each vertex $\mv$. Take $x_\me,y_\mf\in \Graph$ so close to a vertex $\mv$ that there is only one path (necessarily containing $\mv$) that connects them. Then,
%\[
%\begin{split}
%|u_{\me}(x_\me)-u_\mf (y_\mf)|&\le 
%|u_{\me}(x_\me)-u (\mv)|+|u(\mv)-u_\mf (y_\mf)|\\
%&\le L_\me d_{\Graph}(x_\me,v)+L_\mf d_{\Graph}(\mv,y_\mf)\\
%&\le \max\{L_\me,L_\mf\} (d_{\Graph}(x_\me,v)+ d_{\Graph}(\mv,y_\mf)\\
%&= \max\{L_\me,L_\mf\} d_{\Graph}(x_\me,y_\mf).
%\end{split}
%\]
%The assertion follows from the Closed Graph Theorem.
Let $u\in W^{1,\infty}(\Graph)=C(\Graph)\cap \bigoplus_{\me\in\mE} W^{1,\infty}(0,\ell_\me)$. Let \(x,y\in\Graph\) and let \(\gamma\) be a path in \(\Graph\) connecting \(x\) and \(y\). We then have
	\[|u(x)-u(y)|=\left|\int_\gamma u'(t) \ud t\right|\leq L(\gamma)\|u'\|_\infty\]
where \(L(\gamma)\) denotes the length of \(\gamma\). Since \(\gamma\) is arbitrary, we obtain
	\[|u(x)-u(y)|\leq \|u'\|_\infty d_\Graph(x,y).\]
Therefore, \(W^{1,\infty}(\Graph)\) is continuously embedded in \(\mathrm{Lip}(\Graph)\). This however already yields the assertion, because the embedding $H^2\hookrightarrow W^{1,\infty}$ is continuous edgewise.
\end{proof}
\begin{remark}\label{rem:geo-infin}
(1) Trivially, Lemma~\ref{lem:lip} also implies that all eigenfunctions of self-adjoint realizations of metric graph Laplacians (or, in fact, Schrödinger operators) with continuity and/or Dirichlet conditions at all vertices are Lipschitz continuous functions. Remarkably, this is also true of the heat kernel of $(\e^{t\Delta_{\Graph}})_{t\ge 0}$ with respect to each of its two spacial variables. (Observe that \emph{joint} Hölder continuity (with exponent $\frac12$) of the heat kernel has been recently proved in~\cite[Theorem~5.2]{KosMugNic21}.)

(2) 
Observe that Lemma~\ref{lem:lip} also extends to infinite metric graphs, as the proof merely uses that \(\Graph\) is a length space.
\end{remark}
}

The following result bridges the distance between the torsional rigidity of metric and underlying weighted combinatorial graphs.

\begin{theorem}\label{thm:discrete-and-metric-torsion}
	The discrete and metric torsional rigidity are related via the formula
	\begin{equation}
		T(\Graph)=\sum_{\me\in\mE}\frac{\ell_\me^3}{12}+\frac{1}{4}\torsd(\mG).
	\end{equation}
\end{theorem}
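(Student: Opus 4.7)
The proof is a direct computation that uses the closed-form edgewise expression \eqref{eq:torsion-edgewise} together with Corollary~\ref{cor:discrete-torsion}. The plan is to integrate the torsion function $v$ edge-by-edge in terms of its boundary values $v(\mv)$, $\mv\in\mV$, sum over the edge set, and then reassemble the vertex sum into $\torsd(\mG)$.

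First, I would use \eqref{eq:torsion-edgewise} to compute, on a single edge $\me=\{\mv_0,\mv_1\}\in\mE$ of length $\ell_\me$, the three contributions
\[
\int_0^{\ell_\me}\tfrac{1}{2}(\ell_\me-x_\me)x_\me\ud x_\me=\frac{\ell_\me^3}{12},\qquad \int_0^{\ell_\me}\frac{v(\mv_1)-v(\mv_0)}{\ell_\me}x_\me\ud x_\me=\frac{(v(\mv_1)-v(\mv_0))\ell_\me}{2},
\]
and $\int_0^{\ell_\me} v(\mv_0)\ud x_\me=v(\mv_0)\ell_\me$. Adding them yields the clean expression
\[
\int_0^{\ell_\me} v_\me(x_\me)\ud x_\me=\frac{\ell_\me^3}{12}+\frac{\ell_\me}{2}\bigl(v(\mv_0)+v(\mv_1)\bigr).
\]

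Next, I would sum over $\me\in\mE$ and exchange the order of summation, so that the vertex value $v(\mv)$ gets multiplied by $\sum_{\me\in\mE_\mv}\ell_\me=d_\mv^\ell$. Since $v(\mv)=0$ for $\mv\in{{\mVD}}$ by definition, only vertices in ${\mVN}$ survive, and for these we have $d_\mv^\ell=m(\mv)$. Thus
\[
T(\Graph)=\sum_{\me\in\mE}\frac{\ell_\me^3}{12}+\frac{1}{2}\sum_{\mv\in{\mVN}}m(\mv)\,v(\mv).
\]

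Finally, Corollary~\ref{cor:discrete-torsion} gives $v_{|{\mVN}}=\tfrac{1}{2}\mathcal L^{-1}\1_{{\mVN}}=\tfrac{1}{2}g$, so that in view of~\eqref{eq:tors-discrtors}
\[
\sum_{\mv\in{\mVN}}m(\mv)\,v(\mv)=\frac{1}{2}\sum_{\mv\in{\mVN}}m(\mv)\,g(\mv)=\frac{1}{2}\torsd(\mG),
\]
which upon insertion produces the claimed identity. No real obstacle is anticipated here; the only point requiring attention is the correct bookkeeping of the factor $\tfrac{1}{2}$ arising from Corollary~\ref{cor:discrete-torsion}, which together with the $\tfrac{1}{2}$ coming from the edge integration gives rise to the prefactor $\tfrac{1}{4}$ in front of $\torsd(\mG)$.
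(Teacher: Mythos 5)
Your proposal is correct and follows essentially the same route as the paper's own proof: integrate the edgewise formula~\eqref{eq:torsion-edgewise}, reindex the resulting boundary-value sum as a sum over vertices (with coefficient $d_\mv^\ell$), note that Dirichlet vertices contribute zero, and use Corollary~\ref{cor:discrete-torsion} together with~\eqref{eq:tors-discrtors} to produce the $\tfrac{1}{4}\torsd(\mG)$ term. The factor bookkeeping ($\tfrac12\cdot\tfrac12=\tfrac14$) is also exactly as in the paper.
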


%In this proof and in the following we will use the fact that the torsion function is edgewise a quadratic interpolation between the values of $v$ at the vertices that are given by~\eqref{eq:discr-tors}: more precisely, the torsion function \(v\) solves the equation \(-v_\me''=1\) edgewise, hence it has to be of the form
%	\begin{equation}\label{eq:torsion-edgewise}	
%	\begin{split}
%		v_\me(x_\me)&=\frac{1}{2}(\ell_\me-x_\me)x_\me+\frac{v_\me(\ell_\me)-v_\me(0)}{\ell_\me}x_\me+v_\me(0)
%%		\\
%%		&=-\frac{x_\me^2}{2}+\left(\frac{\ell_\me}{2}+\frac{v_\me(\ell_\me)-v_\me(0)}{\ell_\me}\right)+v_\me (0)
%,\qquad x_\me\in [0,\ell_\me].
%		\end{split}
%	\end{equation}

\begin{proof}
	Let \(v\) be the torsion function of \(\Graph\) and let \(g\) be the discrete torsion function of \(\mG\). 
By Corollary \ref{cor:discrete-torsion} we have \(v_{|{\mVN}}=\frac{1}{2}g\). Then, using \eqref{eq:torsion-edgewise} we obtain
	\begin{align*}
		T(\Graph) & =\sum_{\me\in\mE}\int_0^{\ell_\me}v_\me(x_\me)\ud x_\me\\
		& =\sum_{\me\in\mE}\int_0^{\ell_\me}\frac{1}{2}(\ell_\me-x_\me)x_\me\ud x_\me+\sum_{\me\in\mE}\int_0^{\ell_\me} \left(\frac{v_\me(\ell_\me)-v_\me(0)}{\ell_\me}x_\me+v_\me(0)\right)\ud x_\me,
	\end{align*}
with
	\begin{equation*}
		\int_0^{\ell_\me}\frac{1}{2}(\ell_\me-x_\me)x_\me\ud x_\me = \frac{\ell_\me^3}{12},
	\end{equation*}
cf.~\eqref{eq:tors-1d}, and
	\begin{align*}
	\sum_{\me\in\mE}\int_0^{\ell_\me} \left(\frac{v_\me(\ell_\me)-v_\me(0)}{\ell_\me}x_\me+v_\me(0)\right)\ud x_\me
		 & = \frac{1}{2}\sum_{\me\in\mE}\ell_\me(v_\me(\ell_\me)+v_\me(0))\\
		 & = \frac{1}{2}\sum\limits_{\mv\in\mV}d_\mv^\ell v(\mv)\\
		 & = \frac{1}{4}\sum\limits_{\mv\in{\mVN}}m(\mv)g(\mv)\\
		 & = \frac{1}{4}\torsd(\mG).
	\end{align*}
This completes the proof.
\end{proof}

\begin{example}\label{exa:coll-tors-comput}
Let \(\Graph\) be a \emph{stower graph}, i.e.\ the edge set \(\mE\) consists of a finite number of loops (the \emph{petals}) and leaves, all of them connected to a single center vertex \(\mv_c\) -- see Figure \ref{fig:stower-graph}. 

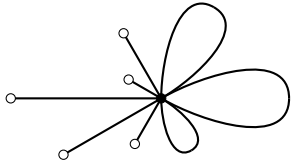
\begin{figure}[H]
\centering
\begin{tikzpicture}
	\coordinate (w0) at (120+30*0:1);
	\coordinate (w1) at (120+30*1:0.5);
	\coordinate (w2) at (120+30*2:2);
	\coordinate (w3) at (120+30*3:1.5);
	\coordinate (w4) at (120+30*4:0.7);
	\foreach \i in {0,1,2,3,4} {
		%\coordinate (w\i) at (120+30*\i:1);
		\draw[thick] (0,0) -- (w\i);
		}
	\coordinate (v1) at (60*1:1.4);
	\coordinate (v5) at (60*5:0.8);
	\coordinate (v6) at (60*6:1.7);
	\foreach \i in {1,5,6} {
		%\coordinate (v\i) at (60*\i:1.4);
		\draw[thick] (0,0) to [out=60*\i-30,in=60*\i+270] (v\i);
		\draw[thick] (0,0) to [out=60*\i+30,in=60*\i+90] (v\i);
	}
		\draw[fill] (0,0) circle (1.75pt);
		\foreach \i in {0,1,2,3,4} {
		%\coordinate (w\i) at (60*\i:1);
		\draw[fill=white] (w\i) circle (1.75pt);
		}
\end{tikzpicture}
\caption{A stower graph with \(|\mE_p|=3\) petals and \(|\mE_l|=5\) leaves: Dirichlet conditions are imposed at the white vertices.}\label{fig:stower-graph}
\end{figure}

Let \(\mE_p\) denote the set of petals and \(\mE_l\) denote the set of leaves. We impose Dirichlet conditions at all of the leaf vertices; in particular \({\mVN}=\{\mv_c\}\) and, in our notation, $\mE_l={\mED}$ and $\mE_p={\mEN}$. The vertex weight \(m\) and discrete potential \(c\) associated with \(\Graph\) are given by
	\begin{align*}
		m(\mv_c) & = \sum_{\me\in \mE_l}\ell_\me+2\sum_{\me\in \mE_p}\ell_\me, &
		c(\mv_c) & = \sum_{\me\in \mE_l}\frac{1}{\ell_\me},
	\end{align*}
	and thus the discrete torsion function \(g\) and the discrete torsional rigidity are given by
	\begin{align*}
		g(\mv_c) & = \left(\sum_{\me\in \mE_l}\ell_\me+2\sum_{\me\in \mE_p}\ell_\me\right)\cdot \left(\sum_{\me\in \mE_l}\frac{1}{\ell_\me}\right)^{-1}
	\end{align*}
	and
	\begin{align*}
		\torsd(\mG) & = m(\mv_c)g(\mv_c)= \left(\sum_{\me\in \mE_l}\ell_\me+2\sum_{\me\in \mE_p}\ell_\me\right)^2\cdot \left(\sum_{\me\in \mE_l}\frac{1}{\ell_\me}\right)^{-1}.
	\end{align*}
	Therefore, by Theorem \ref{thm:discrete-and-metric-torsion}, the torsional rigidity of the stower graph is
	\begin{equation}\label{eq:stower-tors}
		T(\Graph) = \frac{1}{12}\sum_{\me\in\mE}\ell_\me^3+\frac{1}{4}\left(\sum_{\me\in \mE_l}\ell_\me+2\sum_{\me\in \mE_p}\ell_\me\right)^2\cdot \left(\sum_{\me\in \mE_l}\frac{1}{\ell_\me}\right)^{-1};
	\end{equation}
in particular, in the equilateral case we find
	\begin{equation}\label{eq:stower-tors-equi}
		T(\Graph) 
%	= \frac{|\mE|}{12}\ell^3+\frac{1}{4}\ell^3 \frac{(|\mE_l|+2|\mE_p|)^2}{|\mE_l|}.
=\frac{|\Graph|^3}{4|\mE|^3}\left(\frac{|\mE|}{3}+\frac{(|\mE_l|+|\mE_p|)^2}{|\mE_l|}\right).
	\end{equation}
(Observe that we recover Example~\ref{exa:basic-tors}.(3) in the case of $\mE_p=\emptyset$.)

Because $g(\mv_c)$ is explicitly known, the torsion function $v$ can be written down explicitly, much like in the case of the lasso graph in Example~\ref{exa:basic-tors}.(4).

\end{example}
\subsection{A Hadamard-type formula
% for the torsion
}
As an application of the above results on the discrete torsion function, let us conclude this section by 
studying the behavior of the torsion {function} and torsional rigidity under perturbation of the graph's edge lengths. We fix one edge \(\me_0\in\mE\). For \(s>0\), we consider the perturbed graph \(\Graph_s\) with the same topology as \(\Graph\) and whose edge lengths \((\ell_{s,\me})_{\me\in\mE}\) are given by
\[
\ell_{s,\me} = \begin{cases}
	s, & \me=\me_0,\\
	\ell_\me, & \me\neq\me_0,
\end{cases}
\]
i.e., the lengths of the edges \(\me\neq \me_0\) are fixed, while the lengths of the edge \(\me_0\) is variable, and for \(s=\ell_{\me_0}\) we obtain our original graph \(\Graph\). For \(s>0\), the torsion on \(\Graph_s\) will be denoted by \(v_s=(v_{s,\me})_{\me\in\mE}\).

Analogously to known Hadamard-type formulas for eigenvalues for Laplacians on metric graphs (see \cite[Proof of the Lemma]{Fri05}, \cite[Appendix A]{Col15}, and \cite[Lemma 5.2]{BanLev17}), we can derive a Hadamard-type formula for the torsion{al rigidity} of metric graphs:
\begin{proposition}\label{prop:hada}
	The map \((0,\infty)\ni s\mapsto T(\Graph_s)\) is continuously differentiable and its derivative in \(s=\ell_{\me_0}\) is given by
	\begin{equation}\label{eq:hadamard-torsion}
		\frac{\mathrm{d}}{\mathrm{d} s}_{|s=\ell_{\me_0}}T(\Graph_s)=v'(x)^2+2v(x)>0,
	\end{equation}
	where \(v\) is the torsion on \(\Graph\) and \(x\) is an arbitrary element of \(\me_0\). (In particular, the right-hand-side in \eqref{eq:hadamard-torsion} does not depend on the particular choice of \(x\in\me_0\).)
\end{proposition}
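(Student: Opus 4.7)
I would combine the Pólya variational characterization~\eqref{eq:tors-graph-variat} with a change-of-variables trick that moves the $s$-dependence from the domain to the functional. Define the bijection $\Phi_s\colon \Graph\to\Graph_s$ which is the identity on every edge $\me\ne\me_0$ and which, on $\me_0\simeq[0,\ell_{\me_0}]$, is the affine rescaling $x\mapsto \lambda(s)\,x$ with $\lambda(s):=s/\ell_{\me_0}$. A standard one-dimensional substitution on $\me_0$ shows that for any $\tilde u\in H^1_0(\Graph)$, setting $u:=\tilde u\circ\Phi_s^{-1}\in H^1_0(\Graph_s)$, the Pólya quotient on $\Graph_s$ equals
\[
F(\tilde u,s):=\frac{\bigl(\int_{\Graph\setminus\me_0}\tilde u\,\ud x+\lambda(s)\int_{\me_0}\tilde u\,\ud x\bigr)^2}{\int_{\Graph\setminus\me_0}|\tilde u'|^2\,\ud x+\lambda(s)^{-1}\int_{\me_0}|\tilde u'|^2\,\ud x}.
\]
By~\eqref{eq:tors-graph-variat}, $T(\Graph_s)=\max_{\tilde u\in H^1_0(\Graph)} F(\tilde u,s)$, and since the maximizer of~\eqref{eq:tors-graph-variat} is (up to scaling) unique, the maximizer of $F(\cdot,s)$ is $\tilde v_s:=v_s\circ\Phi_s$.

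\textbf{Differentiation via an envelope argument.} The Euler--Lagrange equation for $F(\cdot,s)$ is a well-posed linear elliptic problem on $H^1_0(\Graph)$ whose coefficients depend smoothly (in fact rationally) on $s$, so the implicit function theorem yields that $s\mapsto \tilde v_s$ is of class $C^1$. Because $\tilde v_s$ is a critical point of $F(\cdot,s)$ and $F$ is scale-invariant in $\tilde u$, the envelope (Feynman--Hellmann) theorem gives
\[
\frac{\ud}{\ud s} T(\Graph_s)=\frac{\partial F}{\partial s}(\tilde v_s,s).
\]
Evaluating at $s=\ell_{\me_0}$, where $\lambda=1$, $\lambda'=1/\ell_{\me_0}$ and $\tilde v_s=v$, and using the identity $T(\Graph)=\int_\Graph v\,\ud x=\int_\Graph|v'|^2\,\ud x$ (obtained by testing $-\Delta v=\1$ against $v$), a direct computation yields
\[
\frac{\ud}{\ud s}\bigg|_{s=\ell_{\me_0}}T(\Graph_s)=\frac{2}{\ell_{\me_0}}\int_{\me_0} v\,\ud x+\frac{1}{\ell_{\me_0}}\int_{\me_0}|v'|^2\,\ud x=\frac{1}{\ell_{\me_0}}\int_{\me_0}\bigl(2v+(v')^2\bigr)\ud x.
\]

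\textbf{Conservation law and positivity.} The key algebraic observation is that, since $-v''=1$ on $\me_0$, the quantity $(v')^2+2v$ is a first integral of this ODE; indeed
\[
\frac{\ud}{\ud x}\bigl((v')^2+2v\bigr)=2v'(v''+1)=0\quad\text{on }\me_0,
\]
so $(v')^2+2v$ is constant on $\me_0$. Hence the right-hand side of the previous display reduces to $v'(x)^2+2v(x)$ for any $x\in\me_0$, which is~\eqref{eq:hadamard-torsion}. Strict positivity is immediate from Corollary~\ref{cor:positivity-of-torsion}: at any interior point $x\in\me_0$ one has $v(x)>0$, so the (edge-constant) value $v'(x)^2+2v(x)$ is strictly positive.

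\textbf{Main obstacle.} The only nonroutine point is the rigorous justification that $\tilde v_s$ depends $C^1$-smoothly on $s$, which legitimates the envelope-theorem step. This can be handled either as sketched above via the implicit function theorem applied to the Euler--Lagrange equation, or, alternatively, avoided altogether by invoking Theorem~\ref{thm:discrete-and-metric-torsion}: the first summand $\sum_\me \ell_{s,\me}^3/12$ is manifestly $C^\infty$ in $s$, and the discrete part $\tfrac14\torsd(\mG_s)=\tfrac14(\mathcal L_s^{-1}\1,\1)_{\ell_{m_s}^2}$ depends rationally on $s$ (since the entries of $\mathcal L_s$ and of the weight $m_s$ do), hence is smooth. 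The variational route is preferable because it transparently explains why the answer assembles into the elegant edge-invariant combination $(v')^2+2v$.
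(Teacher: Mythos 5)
Your proof is correct and takes a genuinely different route from the paper's. After establishing the $C^1$-dependence (where you offer the same route the paper actually uses, namely Cramer's rule applied to $\mathcal L_s$), the paper differentiates directly on the moving domain $\Graph_s$: it writes $T(\Graph_s)=2\int_{\Graph_s}v_s-\int_{\Graph_s}|v_s'|^2$, applies the Leibniz rule to each term, integrates by parts, and then observes that all the boundary contributions cancel by the vertex conditions, leaving exactly $v_{s,\me_0}'(s)^2+2v_{s,\me_0}(s)$ evaluated at the endpoint of $\me_0$. You instead pull the variational problem back to a fixed domain via the affine map $\Phi_s$, apply the envelope theorem to the one-parameter family of Pólya quotients, and land on the edge-average $\tfrac{1}{\ell_{\me_0}}\int_{\me_0}\bigl(2v+(v')^2\bigr)\ud x$; the conservation law then collapses this to the constant value. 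Both proofs invoke the same first integral $(v')^2+2v=\text{const}$ on $\me_0$, but the paper uses it only to justify that the right-hand side is point-independent, while in your version it does genuine work by converting an average into a pointwise value. What your approach buys is transparency: no boundary-term bookkeeping, and a clear structural reason for why the answer has this form (the envelope theorem localizes the $s$-derivative to the explicit rescaling on $\me_0$). What the paper's approach buys is self-containedness: it does not need to justify the envelope step or the domain-fixing diffeomorphism, relying only on differentiating under the integral sign. Your envelope step does require that $s\mapsto\tilde v_s$ be $C^1$ in a suitable topology; your suggested fallback via Theorem~\ref{thm:discrete-and-metric-torsion} and the rational dependence of $\mathcal L_s^{-1}$ is precisely how the paper resolves this, so no gap remains.
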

\begin{proof}
	We first observe that the right-hand-side in \eqref{eq:hadamard-torsion} is independent of \(x\in \me_0\simeq [0,\me_0]\), since
	\[
		\frac{\mathrm{d}}{\mathrm{d} x}(v'(x)^2+2v(x))=2v''(x)v'(x)+2v'(x)=0,
	\]
	where we used that the torsion satisfies the equation \(-v''={\mathbf 1}\) edgewise.
	
	Next, we establish that \((0,\infty)\ni s\mapsto T(\Graph_s)\) is indeed continuously differentiable. For that purpose, let \(\mathcal L_s\) be the discrete operator defined via \eqref{eq:discrete-operator} associated with the metric graph \(\Graph_s\) and let \(g_s=\mathcal L_s^{-1}\mathbf 1_{{\mVN}}\) be the associated discrete torsion. Since the entries of \(\mathcal L_s\) with respect to the canonical basis of \(\mathbb R^{{\mVN}}\) are continuously differentiable functions of \(s\) it follows from Cramer's rule that the \(g_s(\mv)\) is a continuously differentiable function of \(s\). Therefore \eqref{eq:tors-discrtors} yields that \((0,\infty)\ni s\mapsto T(\Graph_s)\) is continuously differentiable. We obtain even more: by Corollary \ref{cor:discrete-torsion} we have \(v_{s|{\mVN}}=g_s\). It thus follows from \eqref{eq:torsion-edgewise} and \eqref{eq:torsion-edgewise-deriv} that for each edge \(\me\in\mE\) and each \(x_\me\in\me\simeq [0,\ell_{s,\me}]\) both \(v_{s,\me}(x_\me)\) and \(v_{s,\me}'(x_\me)\) are continuously differentiable functions of \(s\).
	
	Our next aim is to derive \eqref{eq:hadamard-torsion}. Note that it suffices to derive it for one point on \(\me_0\). We first recall that the torsion \(v_s\) satisfies
	\begin{equation}\label{eq:s-dependent-torsion}
		T(\Graph_s) = \int_{\Graph_s} v_s(x)\,\mathrm{d} x = \int_{\Graph_s} v_s'(x)^2\,\mathrm{d} x.
	\end{equation}
	Using the Leibniz rule we find
	\begin{equation}\label{eq:derivative-hadamard-l1}
		\begin{split}
		\frac{\mathrm{d}}{\mathrm{d} s}\int_{\Graph_s} v_s(x)\,\mathrm{d} x
		& =
		\sum_{\me\in \mE}\frac{\mathrm{d}}{\mathrm{d} s}\int_0^{\ell_{s,\me}} v_{s,\me}(x)\,\mathrm{d} x_\me\\
		& =
		v_{s,\me_0}(s)+\sum_{\me\in \mE}\int_0^{\ell_{s,\me}} \frac{\mathrm{d}}{\mathrm{d} s}v_{s,\me}(x)\,\mathrm{d} x_\me\\
		& =
		v_{s,\me_0}(s)+\int_{\Graph_s} \frac{\mathrm{d}}{\mathrm{d} s}v_s(x)\,\mathrm{d} x
		\end{split}
	\end{equation}
	and
	\begin{equation*}
	\begin{split}
 \frac{\mathrm{d}}{\mathrm{d} s}\int_{\Graph_s} |v_s'(x)|^2\,\mathrm{d} x= &
		\sum_{\me\in \mE}\frac{\mathrm{d}}{\mathrm{d} s}\int_0^{\ell_{s,\me}} v_{s,\me}'(x_\me)^2\,\mathrm{d} x_\me\\
		= &
		v_{s,\me_0}'(s)^2+2\sum_{\me\in \mE}\int_0^{\ell_{s,\me}} v_{s,\me}'(x_\me)\frac{\mathrm{d}}{\mathrm{d} s}v_{s,\me}'(x_\me)\,\mathrm{d} x_\me\\
		= &
		v_{s,\me_0}'(s)^2-2\sum_{\me\in \mE}\int_0^{\ell_{s,\me}} v_{s,\me}''(x_\me)\frac{\mathrm{d}}{\mathrm{d} s}v_{s,\me}(x_\me)\,\mathrm{d} x_\me\\
		&\quad+2\sum_{\me\in \mE}v_{s,\me}'(\ell_{s,\me})\frac{\mathrm{d}}{\mathrm{d} s}v_{s,\me}(\ell_{s,\me})-v_{s,\me}'(0)\frac{\mathrm{d}}{\mathrm{d} s}v_{s,\me}(0)\\
		= &
		v_{s,\me_0}'(s)^2+2\int_{\Graph_s} \frac{\mathrm{d}}{\mathrm{d} s}v_{s}(x)\,\mathrm{d} x+2\sum_{\me\in \mE}v_{s,\me}'(\ell_{s,\me})\frac{\mathrm{d}}{\mathrm{d} s}v_{s,\me}(\ell_{s,\me})-v_{s,\me}'(0)\frac{\mathrm{d}}{\mathrm{d} s}v_{s,\me}(0)
	\end{split}
	\end{equation*}
	where we used \(-\Delta_{\Graph_s}v_{s}=\mathbf 1_{\Graph_s}\) in the last step. Now, for \(\me\in\mE\), we have
	\[
		\frac{\mathrm{d}}{\mathrm{d} s}\left(v_{s,\me}(\ell_{s,\me})\right)=\begin{cases}
			\frac{\mathrm{d}}{\mathrm{d} s}v_{s,\me}(\ell_{\me}), & \me\neq\me_0,\\
			\frac{\mathrm{d}}{\mathrm{d} s}v_{s,\me_0}(s)+v'_{s,\me_0}(s), & \me=\me_0.
		\end{cases}
	\]
	Using the boundary conditions imposed at the vertices of \(\mathcal G_s\) we obtain
	\begin{align*}
	& 		\sum_{\me\in \mE}v_{s,\me}'(\ell_{s,\me})\frac{\mathrm{d}}{\mathrm{d} s}v_{s,\me}(\ell_{s,\me})-v_{s,\me}'(0)\frac{\mathrm{d}}{\mathrm{d} s}v_{s,\me}(0)\\
	= &
	-v'_{s,\me_0}(s)^2-\sum\limits_{\mv\in{\mVN}}\frac{\mathrm{d}}{\mathrm{d} s}\left(v_{s}(\mv)\right)\sum_{\me\in\mE_\mv} \frac{\partial v_{s,\me}}{\partial n}(\mv)\\
	& = -v'_{s,\me_0}(s)^2.
	\end{align*}
	We infer that
	\begin{equation}\label{eq:derivative-hadamard-form}
		\frac{\mathrm{d}}{\mathrm{d} s}\int_{\Graph_s} |v_s'(x)|^2\,\mathrm{d} x = -v_{s,\me_0}'(s)^2+2\int_{\Graph_s} \frac{\mathrm{d}}{\mathrm{d} s}v_{s}(x)\,\mathrm{d} x.
	\end{equation}
	Using \eqref{eq:s-dependent-torsion}, \eqref{eq:derivative-hadamard-l1} and \eqref{eq:derivative-hadamard-form}, we finally obtain
	\begin{align*}
		\frac{\ud}{\ud s}T(\Graph_s) & = \frac{\ud}{\ud s}\left(2\int_{\Graph_s} v_s(x)\,\ud x - \int_{\Graph_s} v_s'(x)^2\,\ud x\right)\\
		& = 2v_{s,\me_0}(s)+v_{s,\me_0}'(s)^2
	\end{align*}
	which proves the asserted equation.
\end{proof}

%\footnote{\DM{Marvin, ich habe den Abschnitt über Landschaftsfunktionen herauskommentiert. Ich habe in den letzten Tagen viel darüber nachgedacht und mit maple experimentiert, und bin mittlerweile der Meinung, dass es mehr zu holen gibt -- nicht nur bei metrischen Graphen. Würden wir alles hier, in einem Artikel über Quantengraphen, schreiben, würden diese Ideen völlig untergehen. Außerdem war dieser Abschnitt weitgehend eine Zusammenhangskomponente des Artikels: nach seiner Entfernung gibt's keine Fehlermeldungen, der Rest der Artikel bleibt nach wie vor genießbar.}}
%\input{landscape-draft}

\section{Torsional surgery and inverse problems}\label{sec:surg}

The variational characterization of $T(\Graph)$ in~\eqref{eq:tors-graph-variat} paves the way to the application of surgery methods, similar to what has been systematically done for the {ground-state energy} since~\cite{Nic87}; for instance we can easily derive the following useful surgery principles.

\begin{proposition}\label{prop:surgery-0}
Suppose $\Graph$ is a  metric graph  satisfying Assumption~\ref{ass:graph}. Then the following assertions hold.
\begin{enumerate}[(1)]
\item {[Gluing vertices]} If $\NewGraph$ is obtained from $\Graph$ by gluing two different vertices \(\mv\neq \mw\) in ${\mVN}$, then $T(\NewGraph;{{\mVD}})\le T(\Graph;{{\mVD}})$ with equality if and only if the torsion \(v\) on \(\Graph\) satisfies \(v(\mv)=v(\mw)\).
\item {[Imposing Dirichlet conditions on additional vertices]} If 
%$\NewGraph$ is obtained from $\Graph$ by imposing Dirichlet conditions on a vertex 
$\mv\not\in {{\mVD}}$, then $T(\Graph;{{\mVD}}\cup\{\mv\})< T(\Graph;{{\mVD}})$.
%\footnote{\MP{Ich habe ergänzt, dass die Ungleichung sogar strikt ist.}}
\item {[Attaching pendant graphs with no Dirichlet vertices]} If $\NewGraph$ is formed by attaching a pendant graph $\NewGraph \setminus \Graph$ at only one vertex $\mv\in {\mVN}$ of $\Graph$, and if no Dirichlet conditions are imposed on $\NewGraph \setminus \Graph$, then $T(\Graph) \le T(\NewGraph)$.
%\item If $\NewGraph$ is formed from $\Graph$ by identifying two vertices of $\Graph$ (say, $\mv_1,\mv_2$ are replaced with a new vertex $\mv_0$ and each edge having $\mv_1$ or $\mv_2$ as an endpoint is replaced with a new edge having $\mv_0$ as an endpoint, and in particular the edges between $\mv_1$ and $\mv_2$ are replaced with loops around $\mv_0$), then $\lambdaG \leq \lambda_1(\NewGraph)$.
\item {[Adding edges between points where the torsion function attains the same value]} If $\NewGraph$ is obtained from $\Graph$ by adding an edge $\me=\mv'\mv''$ between two vertices $\mv',\mv''$ of $\Graph$, then $T(\Graph)\le T(\NewGraph)$ provided the torsion function of $\Graph$ attains the same value on both $\mv',\mv''$.
\item {[Lengthening edges]} If $\NewGraph$ is obtained from $\Graph$ by lengthening one edge, then $T(\Graph) < T(\NewGraph)$.
\item {[Scaling the whole graph]} If $ \NewGraph $ is obtained from $ \Graph $ by scaling each edge with the factor $c>0 $, then 
%the torsional rigidity scales as $ c^{-3} $, that is,
$T (\Graph) = c^{-3} T (\NewGraph)$.
\item {[Unfolding parallel edges]} Let $\me_1,\me_2$ be two parallel edges with common endpoints $\mv_1,\mv_2\in \mV$. If $\NewGraph$ is obtained from $\Graph$  by replacing $\me_1,\me_2$ with a single edge $\me_0$ of length $\ell_{\me_0} = \ell_{\me_1}+\ell_{\me_2}$, then $T(\NewGraph)> T(\Graph)$.
\end{enumerate}
\end{proposition}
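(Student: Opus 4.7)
The common tool for all seven assertions is the variational characterization~\eqref{eq:tors-graph-variat} of $T(\Graph)$ as a maximum of Pólya quotients, together with the strict positivity of the torsion $v$ on $\Graph\setminus\mVD$ (Corollary~\ref{cor:positivity-of-torsion}). For (1), I would observe that $H^1_0(\NewGraph;\mVD)$ is naturally identified with the closed subspace of $H^1_0(\Graph;\mVD)$ consisting of functions satisfying the extra constraint $u(\mv)=u(\mw)$; maximizing over a smaller space yields $T(\NewGraph)\leq T(\Graph)$, with equality iff $v$ itself obeys $v(\mv)=v(\mw)$, in which case $v$ remains admissible and hence optimal. Part (2) is analogous: $H^1_0(\Graph;\mVD\cup\{\mv\})\subsetneq H^1_0(\Graph;\mVD)$, and strictness is forced by $v(\mv)>0$, which precludes $v$ from being admissible in the smaller space.

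Parts (3) and (4) are handled by explicit test functions on $\NewGraph$ obtained from $v$ by constant extension: in (3) by the value $v(\mv)$ on the attached pendant subgraph; in (4) by the common value $v(\mv')=v(\mv'')$ on the newly added edge. In both cases continuity across the relevant vertices is automatic, the extension lies in $H^1_0(\NewGraph;\mVD)$, the $L^2$-norm of the derivative is preserved, and the $L^1$-norm does not decrease, so the Pólya quotient does not decrease. Part (5) follows immediately from the Hadamard formula of Proposition~\ref{prop:hada}, since $v'(x)^2+2v(x)>0$ in the interior of any edge by Corollary~\ref{cor:positivity-of-torsion}. Part (6) is a routine change of variables: to each $u\in H^1_0(\Graph)$ one associates $\tilde u\in H^1_0(\NewGraph)$ by $\tilde u(cx):=u(x)$, and computes $\|\tilde u\|_{L^1(\NewGraph)}=c\|u\|_{L^1(\Graph)}$, $\|\tilde u'\|_{L^2(\NewGraph)}^2=c^{-1}\|u'\|_{L^2(\Graph)}^2$, so the Pólya quotient scales by $c^3$.

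Part (7) is the most delicate. I would prove it through the decomposition $T(\Graph)=\sum_{\me\in\mE}\ell_\me^3/12+\torsd(\mG)/4$ from Theorem~\ref{thm:discrete-and-metric-torsion}, tracking both summands under the unfolding. The first sum strictly increases by
\[
\frac{(\ell_{\me_1}+\ell_{\me_2})^3-\ell_{\me_1}^3-\ell_{\me_2}^3}{12}=\frac{\ell_{\me_1}\ell_{\me_2}(\ell_{\me_1}+\ell_{\me_2})}{4}>0.
\]
For the discrete part, note that $\mv_1,\mv_2$ have degree $\geq 2$ in $\Graph$ (by carrying two parallel edges) and therefore necessarily lie in $\mVN$; consequently, the metric vertex weights $m(\mv_i)=d_{\mv_i}^\ell$ remain unchanged by the unfolding. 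The only modification of the quadratic form $q$ on $\ell^2_m(\mVN)$ is that the coupling between $\mv_1$ and $\mv_2$ shrinks from $(1/\ell_{\me_1}+1/\ell_{\me_2})|f(\mv_1)-f(\mv_2)|^2$ to $(1/(\ell_{\me_1}+\ell_{\me_2}))|f(\mv_1)-f(\mv_2)|^2$, which is strictly smaller since $1/\ell_{\me_1}+1/\ell_{\me_2}>1/(\ell_{\me_1}+\ell_{\me_2})$. Thus $\tilde q\leq q$ pointwise in $f$, and the variational characterization of the discrete torsional rigidity yields $\torsd(\widetilde{\mG})\geq\torsd(\mG)$. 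Combined with the strict increase of the edge-sum term, this gives $T(\NewGraph)>T(\Graph)$.

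The main obstacle throughout is distinguishing strict from non-strict inequalities via the sign of $v$; for (7) the delicate point is realizing that the surgery leaves the metric weights $m$ invariant, which is crucial for the monotone comparison of the discrete quadratic forms and is guaranteed precisely because both $\mv_1,\mv_2\in\mVN$.
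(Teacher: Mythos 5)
Your proof is correct, and parts (1)--(6) follow essentially the same route as the paper (with (6) made more explicit via a direct change of variables, which is in fact cleaner than the paper's ``follows essentially from (5)''). Part (7) takes a genuinely different and arguably sharper route: the paper constructs an explicit ``plateau'' test function on $\NewGraph$ by unfolding $v$ around its maximum on $\me_1\cup\me_2$ and inserting a constant stretch of length $\ell_{\me_2}$, and then argues strictness by showing this test function, being constant on a set of positive measure, cannot be a multiple of the torsion of $\NewGraph$ (relying on uniqueness of the Pólya maximizer). Your argument instead uses the decomposition $T(\Graph)=\sum_{\me}\ell_\me^3/12+\torsd(\mG)/4$ from Theorem~\ref{thm:discrete-and-metric-torsion} and tracks the two summands: the cubic term increases by $\ell_{\me_1}\ell_{\me_2}(\ell_{\me_1}+\ell_{\me_2})/4>0$, while the discrete torsional rigidity does not decrease because the vertex weights $m$, the potential $c$, and the underlying vertex set $\mVN$ are all unchanged (here the observation that $\mv_1,\mv_2$ have degree $\ge 2$ and hence lie in $\mVN$ is exactly right), and the only change to the discrete quadratic form is the strict relaxation of the edge weight from $\ell_{\me_1}^{-1}+\ell_{\me_2}^{-1}$ to $(\ell_{\me_1}+\ell_{\me_2})^{-1}$. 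This buys a strict inequality ``for free'' from the cubic term, so you never need to invoke uniqueness of the maximizer; it also handles the loop case $\mv_1=\mv_2$ gracefully, where the edge terms drop out of $q$ entirely. The only thing worth flagging is that the paper's plateau construction and your discrete argument are genuinely complementary: the former is elementary and local, while yours is algebraic and shows more explicitly \emph{where} the increase in $T$ comes from.
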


These surgery principles are taken over from~\cite[Lemma~2.3]{KenKurMal16} and \cite[Theorem~3.18.(2)]{BerKenKur19}, where they were developed to prove similar relations for {the eigenvalues} of modified metric graphs. The proof of the items in Proposition~\ref{prop:surgery-0} is very similar to that of their {eigenvalue} counterparts, up to replacing the role of {the respective eigenfunctions} by the torsion, but we prefer to formulate it for the sake of self-containedness.

\begin{proof}
(1)  For the inequality, we only need to observe that $H^1_0(\NewGraph)$ may be identified with the subspace (of codimension 1) that consists of functions in $H^1_0(\Graph)$ that coincide in \(\mv\) and \(\mw\). For the characterization of equality observe first that \(v(\mv)=v(\mw)\) yields that \(v\) is an admissible test function in the variational characterization of \(T(\NewGraph;{{\mVD}})\) which yields \(T(\NewGraph;{{\mVD}})\geq T(\Graph;{{\mVD}})\). If, on the other hand, \(T(\NewGraph;{{\mVD}})\geq T(\Graph;{{\mVD}})\) holds, then the torsion \(\tilde v\) on \(\NewGraph\) considered as an element of $H^1_0(\Graph)$  maximizes the Poly\'a quotient on \(\Graph\). Therefore, \(\tilde v\) has to be a scalar multiple of \(v\) which implies \(v(\mv)=v(\mw)\).

(2) Likewise, here we observe that $H^1_0(\Graph;{{\mVD}}\cup\{\mv\})$ is a subspace of $H^1_0(\Graph;{{\mVD}})$. Moreover, the inequality has to be strict, since the torsion on \(\Graph\) with respect to the smaller Dirichlet vertex set \({{\mVD}}\) has to be strictly positive in \(\mv\) by Corollary \ref{cor:positivity-of-torsion} and equality would yield that the torsion \(\tilde v\) with respect to the larger Dirichlet vertex set \({{\mVD}}\cup \{\mv\}\) has to be a scalar multiple of \(v\) in contradiction to \(\tilde v(\mv)=0\).

(3) Let $v$ be the torsion function of $\Graph$ and hence the maximizer of the Pólya quotient, and suppose that we obtain $ \NewGraph$ by attaching a pendant graph to a vertex $ \mv \in \Graph$.
 We extend $v$ to a function $\tilde v\in H_0^1(\NewGraph)$ by setting $\tilde v\equiv v(\mv)$ on $\NewGraph \setminus \Graph $. Then $v$ is a valid test function for $T(\NewGraph)$, whose Pólya quotient is not smaller than $T(\Graph)$. 

(4) Like in the proof of (3), extending on the new edge $\me\equiv \mv'\mv''$ the torsion function $v$ of $ \Graph $ by a constant equal to the common value $ v(\mv') =
v (\mv'') $ yields a test function for $ \NewGraph$ whose Pólya quotient is not smaller than $T(\Graph)$. 

(5) This is a direct consequence of Proposition~\ref{prop:hada}.
%We insert a dummy vertex $\mv$ in the middle of $\me$, cut through it\footnote{\MP{Ist das zulässig? Wenn ich das richtig verstehe, könnte man hierdurch einen nichtzusammenhängenden Graphen erhalten, bei dem eine Komponente keine Dirichlet-Knoten enthält. Stattdessen würde ich vorschlagen einfach unsere Hadamard-Formel zu verwenden. Dann erhält man jetzt sogar eine strikte Ungleichung.}}  -- thus obtaining new vertices $\mv',\mv''$ --  and insert a new edge $\me\equiv \mv'\mv''$. Because the torsion function of $\Graph$ attains the same value on $\mv',\mv''$, the assertion is just a special case of (4).

(6) The proof follows essentially (5). One needs to take into account that all edges are scaled with the same factor.

(7)
Let $v$ be the torsion function of $\Graph$. Suppose without loss of generality that $v_{|\me_1\cup\me_2}$ reaches its maximum $M>0$ at some point in $\me_1$, say $x_0 \in [0,\ell_{\me_1}]$. We define a test function $\tilde{v}$ on $\NewGraph$ by $\tilde{v} = v$ on $\NewGraph \setminus \me_0$, and, identifying $\me_0$ with the interval $[0,\ell_{\me_0}]$,
\begin{displaymath}
	\tilde{v} (x) := \begin{cases} v|_{\me_1} (x), \qquad & x \in [0,x_0],\\
	M, & x \in (x_0,x_0+\ell_{\me_2}),\\
	v|_{\me_1}(x+\ell_{\me_2}),\qquad & x \in [x_0+\ell_{\me_2},\ell_{\me_1}].
	\end{cases}
\end{displaymath}
One sees that $\tilde{v} \in H^1_0 (\NewGraph;{{\mVD}})$, hence it  is a valid test function for the Pólya quotient. Additionally, we see that
\[
\begin{aligned}
 \int_{\NewGraph} |\tilde{v}'|^2\ud x &	\le \int_{\mathcal G} |v'|^2\ud x\quad\hbox{and}\quad 
	\int_{\NewGraph} |\tilde{v}|\ud x &\ge \int_{\mathcal G} |v|\ud x,
	\end{aligned}
\]
from which $T(\tilde{\Graph};{{\mVD}})>T(\Graph;{{\mVD}})$ follows.

The strictness of the inequality follows by noting that in each case we have constructed a test function $\tilde{v}$ that cannot be a scalar multiple of the torsion,
% since $\tilde{v}''(x)=0$ for $x\in (x_0,x_0+\ell_{\me_2})$, 
since $\tilde{v}$ is equal to a nonzero constant on a subgraph of positive measure. Since the torsion function is the only admissible test function in $H^1_0(\NewGraph;{{\mVD}})$ whose Pólya quotient amounts to $T(\NewGraph;{{\mVD}})$, this proves the assertion.
\end{proof}

%\footnote{\DM{Es gibt noch einige Chirurgieprinzipien aus~\cite{BerKenKur19}, die ich noch nicht übersetzt habe: z.B. Transplantation, Einfügen eines Graphen an einer Ecke und Entfalten paralleler Kanten. Zumindest die Transplantation dürfte weiterhin funktionieren und vielleicht auch bei der Untersuchung spezifischer Fälle, etwa Beispiel~\ref{eq:tmp-headphone}, auch nützlich sein. Auch die Friedlandersche Symmetrisierungsmethode dürfte erweiterbar sein, vielleicht auch die Idee der Prüfer-Amplitüde. Die Frage ist, was man nun mit all diesen Methoden machen möchte. }\MP{Da wir diese nicht anwenden, bin ich dafür keine weiteren Chirurgieprinzipien einzuführen; Symmetrisierung haben wir mittlerweile. Was Prüfer angeht}}

\begin{remark}
Proposition~\ref{prop:surgery-0}.(1) can arguably solve a quantum version of Braess paradox. In its original form in~\cite{Bra68}, the latter states that the efficiency of a traffic network may counterintuitively \emph{decrease} upon opening a new road; over the decades, it has been theoretically demonstrated and observed in action in many different fields. In the context of diffusion on quantum graphs, it can be reasonably formulated as follows:
\begin{quote}
The heat kernel may be strictly increasing upon increasing the connectivity of a metric graph $\Graph$ by gluing two points.
\end{quote}
This is due to the fact that the heat kernel on $\NewGraph$ (the metric graph arising by gluing two points of $\Graph$) is not dominated by the heat kernel on $\Graph$; indeed, the former is not even \emph{eventually} dominated by the latter, as observed in~\cite{GluMug21}. So, at any time $t$ there are two points $x_t,y_t$ in the metric graph that observe a decrease in diffusion features: in probabilistic terms, the odds of reaching $x_t$ from $y_t$ within $t$ decrease in the new graph configuration. This is perhaps unexpected, given that by \cite[Thm.~3.4]{BerKenKur19} the $k$-th eigenvalue of $\NewGraph$ is not larger than the $k$-th eigenvalue of $\Graph$, for $k=2$ (so convergence to equilibrium is not slower in the new configuration) and in fact for any further $k$, too. What Proposition~\ref{prop:surgery-0}.(1) shows is that, when integrated in time and space as in~\eqref{eq:intheacon}, these pathological effects are indeed negligible.
\end{remark}
{
As a consequence of Proposition~\ref{prop:surgery-0} we mention the following lemma, which can be proved like~\cite[Lemma~5.4]{KenKurMal16}. It will be used to derive an estimate for the torsional rigidity in terms of the \emph{inradius} $\Inr(\Graph;{{\mVD}})$ of \(\Graph\) with respect to \(|{{\mVD}}|\), i.e., 
	\[\Inr(\Graph):=\Inr(\Graph;{{\mVD}}):=\sup\{\dist(x;{{\mVD}})~|~x\in\mathcal G\}\]
where
	\[\dist(x;{{\mVD}}):=\min_{\mv\in{{\mVD}}}d_{\mathcal G}(x,\mv)\]
and \(d_{\mathcal G}\) denotes the path metric on \(\mathcal G\).
}
\begin{lemma}
\label{lem:reduction}
Given any compact, connected, non-empty metric graph $\Graph$, there exists a pumpkin chain $\NewGraph$ such that the following holds:
\begin{enumerate}
\item Exactly one of the outside vertices of \(\NewGraph\) is a Dirichlet vertex. The degree of said Dirichlet vertex is equal to $\Diri(\Graph)$.
\item $\Inr(\NewGraph) = \Inr(\Graph)$, $|\NewGraph| \leq |\Graph|$ and $|{\mVN}(\NewGraph)| \leq |{\mVN}(\Graph)|+1$;
\item $T(\NewGraph) \leq T(\Graph)$.
\end{enumerate}
\end{lemma}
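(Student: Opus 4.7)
The plan is to construct $\NewGraph$ from $\Graph$ by applying a sequence of surgery operations taken from Proposition~\ref{prop:surgery-0}, each of which weakly decreases $T$ while keeping $\Inr$, $|\Graph|$ and $|\mVN|$ under control.

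First I would glue all Dirichlet vertices of $\Graph$ into a single vertex $\mv_D$: since this identification leaves the quadratic form on $H_0^1(\Graph;\mVD)$ unchanged, $T$, $\Inr(\Graph)$, $|\Graph|$ and $|\mVN(\Graph)|$ are all preserved, and the resulting vertex $\mv_D$ has degree $\Diri(\Graph)$, as required by condition~(1). Setting $\psi(x):=\dist_\Graph(x,\mv_D)$, I would then list the critical levels $0=t_0<t_1<\cdots<t_k=\Inr(\Graph)$ as the distinct values taken by $\psi$ at the natural vertices of $\Graph$, completed by $\Inr(\Graph)$ itself. If the inradius is realized only at an interior edge point, I would additionally insert a new degree-$2$ natural vertex $\mw_k$ there; this introduces at most one new natural vertex while preserving $T$, $|\Graph|$ and $\Inr(\Graph)$, and is precisely what accounts for the ``$+1$'' in the bound $|\mVN(\NewGraph)|\leq|\mVN(\Graph)|+1$.

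Next, for each $i$ I would apply Proposition~\ref{prop:surgery-0}.(1) to glue all natural vertices at distance exactly $t_i$ from $\mv_D$ into a single vertex $\mw_i$: each such identification weakly decreases $T$ and preserves $\psi$, $\Inr(\Graph)$ and $|\Graph|$. After this ``horizontal collapse'' every remaining edge of the graph either joins $\mw_{i-1}$ to $\mw_i$ for some $i$, or is a self-loop at some $\mw_i$. Self-loops are pendant subgraphs attached at a single natural vertex, so by Proposition~\ref{prop:surgery-0}.(3) applied in reverse they can be removed, further decreasing both $T$ and $|\Graph|$; and any edge joining consecutive $\mw_{i-1}$ and $\mw_i$ of length strictly greater than $t_i-t_{i-1}$ can be shortened to length exactly $t_i-t_{i-1}$ via Proposition~\ref{prop:surgery-0}.(5), strictly decreasing $T$ and $|\Graph|$. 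The resulting graph is by construction a pumpkin chain $\NewGraph$ satisfying all three conditions.

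The main obstacle will be ensuring that the inradius is \emph{exactly} preserved throughout the loop-removal phase: a self-loop at some $\mw_i$ could in principle contain, at its midpoint, a point realizing $\Inr(\Graph)$, in which case removing it would strictly decrease $\Inr$. This is precisely the reason for the preliminary insertion of the tip vertex $\mw_k$ at a maximizer of $\psi$: once a backbone point at distance $\Inr(\Graph)$ from $\mv_D$ has been guaranteed, removing any self-loop elsewhere can only affect off-backbone points and leaves the backbone-maximum unchanged. With this safeguard in place, the argument follows that of~\cite[Lemma~5.4]{KenKurMal16} nearly verbatim, the only adjustment being to substitute the Rayleigh quotient by the Pólya quotient~\eqref{eq:tors-graph-variat}, which by Proposition~\ref{prop:surgery-0} enjoys all the monotonicity properties that the classical argument invokes.
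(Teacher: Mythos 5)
Your overall strategy (glue Dirichlet vertices into $\mv_D$, look at the distance function $\psi=\dist_\Graph(\cdot,\mv_D)$, insert a tip vertex at an inradius--realizing point, collapse same-level vertices, remove self-loops) is the right one and matches the algorithm from \cite[Lemma~5.4]{KenKurMal16} to which the paper appeals. However, there is a concrete gap in the structural claim that ``after this horizontal collapse every remaining edge of the graph either joins $\mw_{i-1}$ to $\mw_i$ for some $i$, or is a self-loop at some $\mw_i$.'' This is false if you only glue \emph{vertices}: an edge of $\Graph$ can connect $\mw_i$ and $\mw_j$ with $|i-j|\ge 2$, and it can also have a tent-shaped peak of $\psi$ in its interior that crosses intermediate critical levels. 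For example, take $\mv_D$ and one natural vertex $\mv_1$ joined by two parallel edges of lengths $3$ and $5$: then $\psi(\mv_1)=3$, but $\Inr(\Graph)=4$ is attained at the interior point $x=4$ on the long edge, so after inserting $\mw_2$ there and splitting that edge, you are left with a sub-edge of length $4$ joining $\mw_0$ (level $0$) directly to $\mw_2$ (level $4$), skipping level $3$. The resulting graph is not a pumpkin chain, and ``shortening the edge to $t_i-t_{i-1}$'' is not even defined since the two endpoints are not consecutive.

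The missing step is to also insert degree-$2$ natural vertices at \emph{every} point of $\bigcup_i\{\psi=t_i\}$ (not just at the single inradius-realizing point), \emph{before} performing the horizontal collapse. Since $\psi$ is piecewise linear with slope $\pm1$ on edges, after this refinement $\psi$ is strictly monotone between consecutive inserted vertices, so every sub-edge either connects consecutive levels $t_{i-1}$, $t_i$ and has length exactly $t_i-t_{i-1}$, or has both endpoints at the same level and becomes a self-loop after gluing. The inserted degree-$2$ vertices are absorbed into the $\mw_i$'s under the collapse, so they do not affect the bound $|{\mVN}(\NewGraph)|\le|{\mVN}(\Graph)|+1$. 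With this repair, no edge-shortening is needed at all, and the rest of your argument -- including the observation that the tip vertex $\mw_k$ safeguards the inradius under self-loop removal -- goes through.
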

%Here $\Inr(\Graph;{{\mVD}})$ denotes the \emph{inradius} of $\Graph$, i.e., 
%	\[\Inr(\Graph;{{\mVD}}):=\sup\{\dist(x;{{\mVD}})~|~x\in\mathcal G\}\]
%where
%	\[\dist(x;{{\mVD}}):=\min_{\mv\in{{\mVD}}}d_{\mathcal G}(x,\mv)\]
%and \(d_{\mathcal G}\) denotes the path metric on \(\mathcal G\).
\begin{proof}
	After gluing all vertices in \({{\mVD}}(\mathcal G)\) we may assume that there is exactly one Dirichlet vertex \(\mv_0\) in \(\mathcal G\) whose degree is \(\Diri(\Graph)\). Since \(\mathcal G\) is compact there exists some \(x\in\mathcal G\) so that \(\Inr(\Graph;{{\mVD}})=d_\mathcal G(\mv_0,x)\). Then, following the algorithm used in the proof of~\cite[Lemma~5.4]{KenKurMal16} and using Proposition \ref{prop:surgery-0}, one can construct a pumpkin chain as stated.
\end{proof}

{We refer to~\cite[Definition~5.3]{KenKurMal16} for a formal introduction of \textit{pumpkin chains}, which can be visualized as a concatenation of pumpkin graphs, see Figure~\ref{fig:pumpkin-chain}.

\begin{figure}[H]
\begin{tikzpicture}[scale=0.6]
\coordinate (a) at (0,0);
\coordinate (b) at (2,0);
\coordinate (c) at (6,0);
\coordinate (d) at (7,0);
\coordinate (e) at (10,0);
\draw (a) -- (b);
\draw[bend left]  (a) edge (b);
\draw[bend right]  (a) edge (b);
\draw (b) -- (c);
\draw[bend right=90]  (b) edge (c);
\draw[bend left]  (b) edge (c);
\draw (c) -- (d);
\draw[bend left]  (e) edge (d);
\draw[bend right]  (e) edge (d);
\draw[fill] (a) circle (2pt);
\draw[fill] (b) circle (2pt);
\draw[fill] (c) circle (2pt);
\draw[fill] (d) circle (2pt);
\draw[fill] (e) circle (2pt);
\end{tikzpicture}
\caption{A pumpkin chain}
\label{fig:pumpkin-chain}
\end{figure}
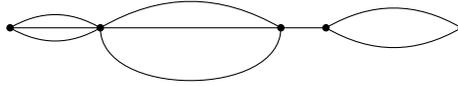}

We are finally in the position to prove our main lower estimate on $T(\Graph)$ in terms of the inradius of $\Graph$.

\begin{proposition}\label{prop:estim-low-inr}
Suppose $\Graph$ is a  metric graph  satisfying Assumption~\ref{ass:graph}. Then the torsional rigidity {with respect to an arbitrary Dirichlet vertex set \(\emptyset\neq{{\mVD}}\subset\mV\)} admits the lower bound
	\begin{equation}\label{eq:estimate-inradius-vertices}
		T(\Graph;{{\mVD}})\geq \frac{\Inr(\mathcal G;{{\mVD}})^3}{3(|\mV\setminus {{\mVD}}|+1)^3}.
	\end{equation}
%	\begin{equation}\label{eq:estimate-inradius-vertices}
%		T(\Graph)\geq \frac{\Inr(\mathcal G;{{\mVD}})^3}{3(|{\mVN}|+1)^3}.
%	\end{equation}
\end{proposition}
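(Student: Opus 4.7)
The plan is to reduce to a pumpkin chain via Lemma~\ref{lem:reduction} and then to apply the variational characterisation~\eqref{eq:tors-graph-variat} with a well-chosen test function. Set $n := |\mV \setminus \mVD| + 1$. By Lemma~\ref{lem:reduction} we may replace $\Graph$ by a pumpkin chain $\NewGraph$ with a single Dirichlet vertex $\mv_0$ such that $\Inr(\NewGraph) = \Inr(\Graph) =: R$, $T(\NewGraph) \le T(\Graph)$, and $|{\mVN}(\NewGraph)| \le n$. It therefore suffices to show $T(\NewGraph) \ge R^3/(3n^3)$.

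Pick a unit-speed geodesic $\gamma \colon [0, R] \to \NewGraph$ from $\mv_0$ to a point $x^* \in \NewGraph$ with $d_\NewGraph(x^*, \mv_0) = R$. Since $\NewGraph$ is a pumpkin chain and $\gamma$ is a simple arc, the image of $\gamma$ visits at most $|{\mVN}(\NewGraph)| \le n$ natural vertices and hence decomposes into at most $n$ consecutive edge-segments whose lengths sum to $R$. By the pigeon-hole principle some segment $\sigma = \gamma([s_0, s_0 + \ell])$ has length $\ell \ge R/n$; denote by $p = \gamma(s_0)$ and $q = \gamma(s_0 + \ell)$ its endpoints along $\gamma$.

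Define a test function $u \in H^1_0(\NewGraph; \mVD)$ by setting $u \equiv 0$ on the $\mv_0$-side of $\sigma$, $u(\gamma(s_0 + s)) = \tfrac{1}{2}s(2\ell - s)$ for $s \in [0, \ell]$ (the torsion function of $J_0$ of length $\ell$; see Example~\ref{exa:basic-tors}(1)), and $u \equiv \ell^2/2$ on the far side of $\sigma$; on each ``straddling'' edge, i.e.\ one with endpoints assigned the values $0$ and $\ell^2/2$ (which can only occur among the parallel edges of the pumpkin containing $\sigma$), one extends $u$ by linear interpolation. From Example~\ref{exa:basic-tors}(1) one reads off the key identities $\int_\sigma u\,\ud x = \int_\sigma (u')^2\,\ud x = \ell^3/3$. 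An AM--HM-type estimate for the contributions of the straddling edges then shows that the P\'olya quotient in~\eqref{eq:tors-graph-variat} is bounded below by $\ell^3/3$, whence $T(\NewGraph) \ge \ell^3/3 \ge R^3/(3n^3)$, as required.

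I expect the main technical obstacle to reside in the straddling-edge analysis. When $\sigma$ is a full edge of $\NewGraph$ (the case in which $x^*$ is itself a vertex), each straddling edge in the same pumpkin has length at least $\ell$, and a direct AM--HM comparison of $\int u$ and $\int (u')^2$ on those edges is enough to absorb their effect into the bound. The delicate subcase is when $\sigma$ is only a partial edge---that is, when $x^*$ lies strictly in the interior of some edge---since the pumpkin containing $\sigma$ may then also contain straddling edges strictly shorter than $\ell$, whose contribution to $\int (u')^2$ requires a refined choice of extension (for instance, allowing $u$ at the far-side vertex of the offending pumpkin to differ from $\ell^2/2$) in order to preserve the $\ell^3/3$ bound.
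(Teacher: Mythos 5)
Your plan is correct and follows essentially the same route as the paper: reduce to a pumpkin chain via Lemma~\ref{lem:reduction}, locate a long segment by pigeonhole, and test the P\'olya quotient with a function that is zero before the segment, quadratic on it, and constant beyond.

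The paper's test function is a bit simpler than yours, however: after the reduction of \cite[Lemma~5.4]{KenKurMal16} each pumpkin is equilateral (all parallel edges between $\mv_{j-1}$ and $\mv_j$ have the same length $\ell_j$), so one may put the quadratic profile $\tfrac12 x(2\ell_k-x)$ on \emph{every} parallel edge of the $k$-th pumpkin; the functions then automatically agree at both endpoints, continuity is free, and there are no straddling edges to worry about. With this choice one gets $\int u \ge p\,\ell_k^3/3$ and $\int(u')^2 = p\,\ell_k^3/3$ (where $p$ is the number of parallel edges in pumpkin $k$), and the quotient is trivially at least $\ell_k^3/3 \ge R^3/(3m^3)$. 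Your variant (quadratic on $\sigma$ only, linear interpolation on the remaining parallel edges) also works: since $\sigma$ lies on a geodesic, any parallel edge has length $\ell_i\ge\ell$, so each straddling edge contributes $\ell_i\ell^2/4$ to the numerator and $\ell^4/(4\ell_i)\le \ell_i\ell^2/4$ to the denominator, and the elementary inequality $\tfrac{(A+B)^2}{A+D}\ge A$ for $A,B,D\ge 0$ with $B\ge D$ closes the argument. Finally, the ``delicate subcase'' you flag, where $x^*$ lies in the interior of an edge, does not actually arise: the construction in \cite[Lemma~5.4]{KenKurMal16} builds the pumpkin chain around a geodesic from the Dirichlet vertex to a point realizing the inradius and makes that point the terminal vertex $\mv_m$, so the inradius of the reduced graph is attained at a vertex and $\sigma$ is always a full edge.
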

\begin{proof}
{For the sake of notational simplicity, let us denote by  $|{\mVN}|$ the number of non-Dirichlet vertices.}
	By Lemma \ref{lem:reduction} it is sufficient to prove the estimate
		\[T(\Graph;\{\mv_0\})\geq \frac{\Inr(\mathcal G;{{\mVD}})^3}{3|{\mVN}|^3}\]
	for any pumpkin chain \(\mathcal G\) where {Dirichlet conditions are imposed on exactly one extremal vertex \(\mv_0\)}. For such a pumpkin chain, beginning from the Dirichlet vertex, let \(\mv_0,\mv_1,\ldots,\mv_m\) denote the vertices of \(\mathcal G\) where \(m=|{\mVN}|\) and let \(\ell_j\) denote the length of the edges connecting \(\mv_{j-1}\) and \(\mv_j\). Then
	\[\Inr(\mathcal G;\mV_{D})=d_\mathcal G(\mv_0,\mv_m)=\sum_{j=1}^m\ell_j\]
	holds and thus, by the pigeonhole principle, there exists some \(k\) with \(\ell_k\geq \frac{\Inr(\Graph;{{\mVD}})}{m}\). We now consider the following test function \(u\) on \(\mathcal G\): on an edge \(\me\simeq [0,\ell_j]\) connecting \(\mv_{j-1}\) and \(\mv_j\) we set \(u(x)=0\) if \(1\leq j <k\), \(u(x)=\frac{1}{2}x(2\ell_j-x)\) if \(j=k\), and \(u(x)=\frac{\ell_k^2}{2} \) if \(k< j\leq m\). Then, it is immediate to check that the Pólya quotient of \(u\) is bounded from below by \(\frac{\Inr(\mathcal G;{{\mVD}})^3}{3|{\mVN}|^3}\). Therefore, using \eqref{eq:torsion-var} yields the asserted estimate.
\end{proof}

If $\Graph$ is a tree, then the previous estimate can be sharpened.

\begin{proposition}
Let $\Graph$ be a  metric graph  satisfying Assumption~\ref{ass:graph}. Let, additionally, $\Graph$ be a tree. 

If $\mG$ has precisely one Dirichlet vertex, then
	\begin{equation}\label{eq:estimate-inradius-vertices-tree}
		T(\Graph)\geq \frac{1}{3}\Inr(\mathcal G;{{\mVD}})^3.
	\end{equation}

If $\mG$ has precisely two Dirichlet vertices $\mv,\mw$,  then
	\begin{equation}\label{eq:estimate-inradius-vertices-2}
		T(\Graph)\geq \frac{1}{12}\dist(\mv,\mw)^3.
	\end{equation}
\end{proposition}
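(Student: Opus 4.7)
My plan for both inequalities is to reduce each to a suitable one-dimensional subgraph by iterated pendant attachment. Specifically, I identify a distinguished path $P$ in $\Graph$ whose endpoints carry \emph{exactly} the Dirichlet conditions, view $\Graph$ as obtained from $P$ by successively attaching pendant subtrees at natural vertices, and then apply Proposition~\ref{prop:surgery-0}.(3) iteratively to conclude $T(\Graph)\geq T(P)$. The value $T(P)$ is then read off directly from Example~\ref{exa:basic-tors}.(1).

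For~\eqref{eq:estimate-inradius-vertices-tree}, I would let $\mv_0\in {{\mVD}}$ be the unique Dirichlet vertex. Since $\Graph$ is a compact tree, $\Inr(\Graph;{{\mVD}})=\sup_{x\in\Graph}\dist(\mv_0,x)$ is attained, and I first observe that the maximum is realized at some leaf $\mv^*\in\mV$: indeed, any interior-of-edge or higher-degree maximizer could be extended along an outgoing edge to a strictly larger distance. Let $P\subset\Graph$ be the unique path from $\mv_0$ to $\mv^*$, of total length $\Inr(\Graph;{{\mVD}})$. Viewed in isolation, $P$ is an interval of length $\Inr(\Graph;{{\mVD}})$ with Dirichlet condition at $\mv_0$ and natural condition at $\mv^*$, because the vertices in the interior of $P$ have degree $2$ in $P$ and the continuity--Kirchhoff conditions there merely encode smoothness. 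Hence $P\simeq J_0$ and by~\eqref{eq:tors-1d-DN} we have $T(P)=\Inr(\Graph;{{\mVD}})^3/3$.

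It then remains to compare $T(\Graph)$ with $T(P)$. Because $\Graph$ is a tree, each connected component of the closure of $\Graph\setminus P$ is a subtree attached to $P$ at a single vertex $\mv^{(i)}$ in the interior of $P$; moreover, since ${{\mVD}}=\{\mv_0\}\subset P$, no Dirichlet vertex lies on any such subtree, and each attachment point $\mv^{(i)}$ belongs to ${\mVN}$. Building $\Graph$ from $P$ by attaching these subtrees one at a time, each step meets the hypotheses of Proposition~\ref{prop:surgery-0}.(3), and iteration yields $T(\Graph)\geq T(P)$, proving~\eqref{eq:estimate-inradius-vertices-tree}.

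For~\eqref{eq:estimate-inradius-vertices-2} the same argument goes through verbatim with $P$ taken to be the unique path in $\Graph$ joining the two Dirichlet leaves $\mv$ and $\mw$; now $P$ carries Dirichlet conditions at both endpoints, so $P\simeq J_1$ with length $\dist(\mv,\mw)$, and~\eqref{eq:tors-1d} gives $T(P)=\dist(\mv,\mw)^3/12$. I do not anticipate any real obstacle; the only piece of bookkeeping that requires care is checking that the iterative pendant-decomposition of $\Graph$ into $P$ plus pendant subtrees actually satisfies the hypotheses of Proposition~\ref{prop:surgery-0}.(3), but this follows immediately from the tree structure together with ${{\mVD}}\subset P$.
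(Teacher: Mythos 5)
Your proof is correct and is essentially the paper's argument, just narrated in the opposite direction: the paper recursively \emph{prunes} pendant subtrees from $\Graph$ (invoking Proposition~\ref{prop:surgery-0} to show the torsional rigidity only decreases) until only the distinguished path $P$ remains, whereas you start from $P$ and iteratively \emph{attach} the pendant subtrees via Proposition~\ref{prop:surgery-0}.(3); in both cases one lands on $T(\Graph)\ge T(P)$ with $P\simeq J_0$ or $P\simeq J_1$ and then reads off the value from Example~\ref{exa:basic-tors}.(1). Your additional observations — that the inradius of a compact tree from $\mv_0$ is realized at a leaf, and that ${{\mVD}}\subset P$ guarantees every attachment point lies in ${\mVN}$ with no Dirichlet conditions on the pendants — are exactly the bookkeeping the paper leaves implicit, so there is no gap.
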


\begin{proof}
Since $\mG$ is a tree, we can by Proposition~\ref{prop:surgery-0} recursively prune it -- only lowering its torsion function --, ending up with a path graph whose length is $\Inr(\mathcal G;{{\mVD}})$ or $\dist(\mv,\mw)$, respectively. Now the claim follows by Example~\ref{exa:basic-tors}.
\end{proof}

We have seen in Example~\ref{exa:basic-tors} that the torsional rigidity of open subsets of $\R$ can be explicitly computed. An interval is, of course, just a very elementary metric graph. Owing to Proposition~\ref{prop:surgery-0} we can extend~\eqref{eq:tors-1d} and~\eqref{eq:tors-1d-DN} to an estimate valid for all metric graphs: the following is the metric graph counterpart of the celebrated \emph{Saint-Venant inequality} for domains.

\begin{theorem}\label{thm:polya}
For any metric graph $\Graph$ one has
\begin{equation}\label{eq:tors-first-1}
T(\Graph)\le \frac{|\Graph|^3}{3};
\end{equation}
the inequality is actually an equality if and only if $\Graph$ is an interval with mixed Dirichlet/Neumann conditions.

If, additionally, $\Graph$ is doubly connected upon gluing all vertices in \({{\mVD}}\), then we have the improved estimate
\begin{equation}\label{eq:tors-first-2}
T(\Graph)\le \frac{|\Graph|^3}{12};
\end{equation}
in this case, the inequality is actually an equality if and only if $\Graph$ is a caterpillar graph.
\end{theorem}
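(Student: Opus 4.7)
My plan is to exploit the variational characterization~\eqref{eq:tors-graph-variat} together with the surgical toolbox of Proposition~\ref{prop:surgery-0} in order to reduce an arbitrary admissible $\Graph$ to the appropriate extremal graph of the same total length. A first reduction: since gluing (or, conversely, splitting) Dirichlet vertices affects neither $T(\Graph)$ nor $|\Graph|$, I may assume throughout that $\mVD=\{\mv_0\}$ is a singleton.

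For Part~1, the target is the interval $J_0$ of length $|\Graph|$ with one Dirichlet and one Neumann endpoint, whose torsional rigidity equals $|\Graph|^3/3$ by Example~\ref{exa:basic-tors}.(1). The plan is to perform a sequence of length-preserving surgeries that each (weakly) increase $T$: Proposition~\ref{prop:surgery-0}.(7) to unfold every remaining pair of parallel edges, and the reverse of Proposition~\ref{prop:surgery-0}.(1) to split non-Dirichlet vertices of degree $\ge 3$ -- carried out on cycles so that connectivity is preserved. Once no vertex of degree $\ge 3$ and no parallel edges remain, $\Graph$ is a path of length $|\Graph|$ carrying its single Dirichlet vertex at one end, i.e., $J_0$, and the desired bound follows. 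For the equality statement, note that (7) is always strict and that equality in (1) requires the torsion function to agree on the glued vertices; hence equality forces every surgical step to have been trivial, which in turn forces $\Graph$ to have been $J_0$ from the start.

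For Part~2, the assumption that $\Graph$ (after gluing $\mVD$) is doubly connected can be maintained throughout the reduction if the surgeries are sequenced carefully. The natural target becomes $J_1$, the interval of length $|\Graph|$ with both endpoints Dirichlet, which after identification of its two Dirichlet vertices is a loop -- hence doubly connected -- and satisfies $T(J_1)=|\Graph|^3/12$ by Example~\ref{exa:basic-tors}.(1). The reduction now halts one step before that of Part~1: a bundle of $k\ge 3$ parallel edges is unfolded only down to a 2-edge pumpkin, because any further unfolding would sever double connectivity. The extremal graphs are therefore the pumpkin chains whose individual pumpkins all consist of exactly two parallel edges -- the caterpillar graphs of the statement; the identity $T=|\Graph|^3/12$ for any such caterpillar can be verified directly from Theorem~\ref{thm:discrete-and-metric-torsion}. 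The hardest part is the algorithmic bookkeeping: sequencing the surgeries so the procedure terminates in the intended target while preserving connectivity at every step (and, in Part~2, double connectivity), together with a careful analysis of when each surgery is strict so as to pin down the equality cases -- especially in Part~2, where the family of extremizers is strictly richer than a single graph.
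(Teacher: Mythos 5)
Your approach is genuinely different from the paper's, which does not use surgery at all but rather a Schwarz-type symmetrization: the torsion function $v$ is rearranged into a monotone function $v^*$ on the interval $(0,|\Graph|)$ so that superlevel sets have equal measure; a coarea/Cavalieri argument then shows that the P\'olya quotient of $v$ is at most $\bigl(\essinf_t n(t)\bigr)^{-2}$ times that of $v^*$, where $n(t)$ is the number of preimages of the level $t$. Continuity gives $n\ge 1$ in general and double connectivity gives $n\ge 2$ off the vertex values, which yields the constants $3$ and $12$, and the equality cases follow from forcing $n\equiv 1$ (resp.\ $n\equiv 2$). This sidesteps the combinatorial difficulties inherent in surgery.

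Unfortunately, the surgical reduction you propose has a genuine gap, and it is not just bookkeeping. Your two tools are unfolding parallel edges (Proposition~\ref{prop:surgery-0}.(7)) and splitting vertices, the reverse of gluing (Proposition~\ref{prop:surgery-0}.(1)); the latter is only admissible so long as it does not disconnect the graph, and -- more importantly -- so long as every resulting component still contains at least one Dirichlet vertex, otherwise the torsion problem becomes ill-posed. Already the simplest tree defeats this: take the $3$-star with center $\mathsf c$ of degree $3$ and leaves $\mathsf a,\mathsf b,\mathsf d$, with $\mVD=\{\mathsf d\}$. There are no parallel edges to unfold, and $\mathsf c$ is a cut vertex, so every way of splitting $\mathsf c$ produces a component (containing $\mathsf a$ and/or $\mathsf b$) with no Dirichlet vertex. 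Hence no admissible surgery is available, yet the graph is not $J_0$ and the theorem asserts strict inequality. More generally, any graph with a non-Dirichlet cut vertex of degree $\ge 3$ -- in particular every nontrivial tree -- cannot be straightened into a path by these two operations. The same obstruction persists in Part~2 if the graph is doubly connected only through part of its structure and retains tree-like appendages. Your algorithm therefore cannot terminate at $J_0$ (resp.\ a caterpillar) in general, and the inequality does not follow. Any repair would need either a fundamentally new surgery (something like an area-preserving folding/rearrangement at branch points), which is essentially what the symmetrization in the paper accomplishes, or an entirely different argument.
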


We recall that a \textit{caterpillar graph} is a 2-regular pumpkin chain where one of the two endpoints (of degree two) is equipped with Dirichlet conditions, see Figure~\ref{fig:caterpillar}. Note that the torsional rigidity of a caterpillar graph of length \(|\Graph|\) is indeed \(\frac{|\Graph|^3}{12}\). This follows from Proposition \ref{prop:surgery-0}(1), \eqref{eq:tors-1d}, and the fact that the torsion on an interval with Dirichlet conditions imposed at both end points is symmetric with respect to the center point of the interval.
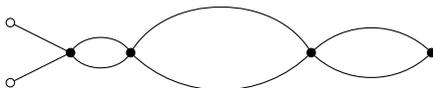
\begin{figure}[H]
\begin{tikzpicture}[scale=0.8]
\coordinate (a1) at (1,.5);
\coordinate (a2) at (1,-.5);
\coordinate (b) at (2,0);
\coordinate (c) at (3,0);
\coordinate (d) at (6,0);
\coordinate (e) at (8,0);
\draw (a1) -- (b);
\draw (a2) -- (b);
\draw[bend left=60] (b) edge (c);
\draw[bend right=60] (b) edge (c);
\draw[bend left=60] (c) edge (d);
\draw[bend right=45] (c) edge (d);
\draw[bend left=45] (d) edge (e);
\draw[bend right=45] (d) edge (e);
\draw[black,fill=white] (a1) circle (2pt);
\draw[black,fill=white] (a2) circle (2pt);
\draw[fill] (b) circle (2pt);
\draw[fill] (c) circle (2pt);
\draw[fill] (d) circle (2pt);
\draw[fill] (e) circle (2pt);
\end{tikzpicture}
\caption{A caterpillar graph: we recall that, as far as spectral or torsional properties are concerned, gluing Dirichlet vertices makes no difference.}
\label{fig:caterpillar}
\end{figure}

In the case of planar domains, this result was proved in \cite[Section~5]{Pol48} using a method based on Steiner symmetrization. We, too, will use a rearrangement method, but in this case we will rather follow an approach first proposed in~\cite{Fri05} and closer in spirit to Schwarz symmetrization.

\begin{proof}
First of all, take the torsion function $v$ and observe that each value $t$ between its minimum $0$ and its maximum $\|v\|_\infty$ is attained $n(t)$ times along $\Graph$, where
\begin{itemize}
\item $n(t)\ge 1$, since $v$ is continuous on the metric space $\Graph$;
\item $n(t)\ge 2$ for all $t\in [0,\|v\|_\infty]\setminus v(\mV)$ if $\Graph$ is doubly connected, as one sees adapting (with very minor modifications) the proof of \cite[Lemma~3.7]{BerKenKur17}.
\end{itemize}
Now, define the symmetrized version $v^*$ of the torsion function $v$ as in the proof of~\cite[Lemma~3]{Fri05}: letting $m_v(t)$ be the measure of the superlevel set $\{x\in \Graph:v(x)>t\}$ we can uniquely define a new function $v^*\in H^1(0,|\Graph|)$ in such a way that $v^*(0)=v(0)$, $v^*$ is monotonically increasing, and $m_v(t)=m_{v^*}(t)$ for a.e.\ $t\in [0,\|v\|_\infty]$
%; or, in the doubly connected case, we can even assume that $v^*\in H^1_0(0,|\Graph|)$ and its graph is symmetric about the midpoint
. Then one checks as in the proof of \cite[Theorem~2.1]{BanLev17} that 
\[
\int_{\Graph} |v'(x)|^2\ud x\ge \essinf\limits_{t\in [0,\|v\|_\infty]}n(t)^2 \int_0^{|\Graph|}|(v^*)'(y)|^2\ud y,
\]
see~\cite[equation (3.13)]{BanLev17}. On the other hand, by Cavalieri's principle 
\[
\int_{\Graph} |v(x)|^p\ud x=\int_0^{|\Graph|}|v^*(y)|^p\ud y
\]
for all $p\ge 1$. It follows in particular that 
\[
\frac{\|v\|^2_{L^1(\Graph)}}{\|v'\|^2_{L^2(\Graph)}}\le \frac{1}{\essinf\limits_{t\in [0,\|v\|_\infty]}n(t)^2}\frac{\|v^*\|^2_{L^1(0,|\Graph|)}}{\|(v^*)'\|^2_{L^2(0,|\Graph|)}};
\]
because $v^*$ is an admissible test function for the Pólya quotient of the interval $(0,|\Graph|)$ with mixed Dirichlet/Neumann conditions, this yields the asserted estimates \eqref{eq:tors-first-1} and \eqref{eq:tors-first-2}. Now as in the proof of \cite[Lemma~4.3]{BerKenKur17} equality in \eqref{eq:tors-first-1} yields \(n(t)=1\) for all \(t\in [0,\|v\|_\infty]\) while equality in \eqref{eq:tors-first-2} yields \(n(t)=2\) for all \(t\in [0,\|v\|_\infty]\setminus\psi(\mV)\) from which we infer the claimed characterizations on equality in \eqref{eq:tors-first-1} and \eqref{eq:tors-first-2} respectively.
%\footnote{\MP{Ich habe einen kurzen Beweis der Charakterisierung der Gleichheit ergänzt. Reicht das so? Sonst müssten wir oben noch ein paar Zwischenschritte ergänzen.}}
\end{proof}

\begin{remark}\label{rem:polya-uplow}
(1) Theorem~\ref{thm:polya} is the metric graph counterpart of a celebrated isoperimetric inequality, which for domains was conjectured by Saint-Venant and eventually proved in~\cite{Pol48}. Let us remark that Makai provided in~\cite{Mak66} a different proof, which in turn yields an estimate in terms of volume and inradius: in our case, a hypothetical Makai-type inequality would read \( T(\Graph)<4|\Graph|\Inr(\Graph;{{\mVD}})^2\), but we have not been able to prove it.

(2) The \emph{upper} estimate in Theorem~\ref{thm:polya} does not have a \emph{lower} counterpart. To see this, let $\Graph_n$ be the graph consisting of $n$ disjoint intervals of length $n^{-1}$, each with Dirichlet conditions at both endpoints. Then
\[
|\Graph_n|\equiv 1\quad\hbox{but}\quad T(\Graph_n)=\sum_{k=1}^n \frac{|\Graph_n|^3}{12}=\frac{1}{12 n^2}\searrow 0.
\]

(3) Let us stress that~\eqref{eq:tors-first-1} can be regarded as a Faber--Krahn-type inequality: it states that
\[
|J_0|^3 T(J_0)^{-1}\le |\Graph|^3 T(\Graph)^{-1},
\]
where $J_0$ is the path graph with mixed Dirichlet/Neumann conditions;  equality holds if and only if $\Graph=J_0$ (up to isomorphism). Likewise, if $\Graph$ is doubly connected after gluing all vertices in ${{\mVD}}$, then by~\eqref{eq:tors-first-2}
\[
|J_1|^3 T(J_1)^{-1}\le |\Graph|^3 T(\Graph)^{-1},
\]
where if $J_1$ is the path graph with both Dirichlet conditions;
 equality if and only if $\Graph=J_1$ (up to isomorphism).
 
%{ 
%(4) Carefully examining the proof of Theorem~\ref{thm:polya} reveals that the highest possible $\|v\|_\infty$, among all metric graphs of same total length, is attained when $\Graph$ is an interval with mixed Dirichlet/Neumann conditions. In particular $\|v\|_\infty\le \frac{|\Graph|^2}{2}$.}\footnote{\MP{Ich habe darüber nachgedacht und mir ist leider nicht klar, wie das hier so einfach gefolgert werden kann.}}
\end{remark}

The variational formulation \eqref{eq:tors-graph-variat} makes it possible to derive lower estimates on $T(\Graph)$: for instance, the distance function 
\[
\Graph\ni x\mapsto \dist (x;{{\mVD}})\in [0,\infty)
\]
is an admissible test function for the Pólya quotient. In fact, an even simpler test function is given by
\[
\psi:=\sum_{\me\in\mE}v_\me \1_\me,
\]
where $\1_\me$ is the characteristic function of the edge $\me$ and $v_\me$ is the torsion function of the individual edge $\me\equiv (0,\ell_\me)$ (with Dirichlet conditions imposed at all endpoints), i.e.,
\[
v_\me(x):=\frac{1}{2}x(\ell_\me-x),\qquad x\in (0,\ell_\me),\ \me\in\mE:
\]
the torsional rigidity of $\Graph$ is not smaller than the Pólya quotient evaluated at $\psi$, i.e.,
\begin{equation}\label{eq:firstlower}
T(\Graph)\ge \frac{1}{12}\sum_{\me\in \mE} \ell_\me^3
\end{equation}
we know from Theorem~\ref{thm:discrete-and-metric-torsion} that the accuracy of this estimate is measured by the discrete torsion function $T(\mG)$.

By Jensen's inequality, we deduce from~\eqref{eq:firstlower} that
\begin{equation}\label{eq:firstlower-2}
T(\Graph)\ge \frac{|\Graph|^3}{12 |\mE|^2}:
\end{equation}
this estimate becomes an equality if and only if $\Graph$ {is an equilateral flower with Dirichlet conditions imposed in the (only) vertex (equivalently, if $\Graph$} is 
a disconnected metric graph consisting of {equilateral} path graphs with Dirichlet conditions  imposed at all endpoints): {this is arguably the metric graph counterpart of Makai's inequality
\[
T(\Omega)> \frac{4|\Omega|^3}{3|\partial \Omega|^2}
\]
for convex planar domains $\Omega$, see~\cite{Mak59,Pol61}.}

The estimate \eqref{eq:firstlower} can indeed be improved.
In Example~\ref{exa:coll-tors-comput} we have computed the torsional rigidity of  \emph{stowers}: this class of graphs will prove useful when proving the following lower bound.

\begin{proposition}\label{thm:polya-nic}
There holds
\[
\frac{1}{12}\sum_{\me\in\mE}\ell_\me^3+\frac{1}{4}\left(\sum_{\me\in {\mED}}\ell_\me+2\sum_{\me\in \mE\setminus {\mED}}\ell_\me\right)^2\cdot \left(\sum_{\me\in {\mED}}\frac{1}{\ell_\me}\right)^{-1}
\le T(\Graph),
\]
where ${\mED}$ is the set of edges incident to vertices in ${{\mVD}}$; the inequality becomes an equality if $\Graph$ is a stower graph.
\end{proposition}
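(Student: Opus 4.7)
The plan is to realize the right-hand side as the torsional rigidity of a stower graph built from $\Graph$ by surgery, and then to invoke Proposition~\ref{prop:surgery-0}. Note first that, by Assumption~\ref{ass:graph}, each Dirichlet vertex has degree one; hence, as long as ${\mVN}\neq\emptyset$, every edge of ${\mED}$ has exactly one Dirichlet and one non-Dirichlet endpoint, and every edge of ${\mEN}$ has two non-Dirichlet endpoints. (The degenerate case ${\mVN}=\emptyset$ forces $\Graph=J_1$, which can be set aside.)

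I would perform two successive surgical moves on $\Graph$. First, glue all vertices of ${{\mVD}}$ into a single Dirichlet vertex $\mv_0$: by the remark following the definition of torsional rigidity, this leaves $a_\Graph$ (and hence $T(\Graph;{{\mVD}})$) unchanged. Second, iteratively apply Proposition~\ref{prop:surgery-0}(1) to glue all vertices of ${\mVN}$ into a single non-Dirichlet vertex $\mv_c$; this produces a graph $\NewGraph$ with $T(\NewGraph)\le T(\Graph)$. In $\NewGraph$ every edge of ${\mEN}$ has become a loop at $\mv_c$ (a petal) and every edge of ${\mED}$ is a pendant edge joining $\mv_c$ to $\mv_0$ (a leaf); hence $\NewGraph$ is a stower graph with $\mE_p={\mEN}$ and $\mE_l={\mED}$ in the notation of Example~\ref{exa:coll-tors-comput}. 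Substituting into formula~\eqref{eq:stower-tors} gives precisely the right-hand side of the claim, proving the inequality. Moreover, when $\Graph$ is already a stower, ${\mVN}$ is a singleton and the second surgical step is vacuous, so equality follows.

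Beyond routine bookkeeping, no single step is delicate; the proof essentially reduces to recognising the stower formula inside the iterative gluing, together with verifying that the edges of the original graph retain their lengths and that petals and leaves in $\NewGraph$ match ${\mEN}$ and ${\mED}$ correctly. As an independent cross-check, I would re-derive the same bound from the discrete picture: by Theorem~\ref{thm:discrete-and-metric-torsion} one has $T(\Graph)=\tfrac{1}{12}\sum_{\me\in\mE}\ell_\me^3+\tfrac{1}{4}\torsd(\mG)$, and plugging the test function $f\equiv \mathbf{1}_{{\mVN}}$ into the variational formula for $\torsd(\mG)$ yields $\sum_{\mv\in{\mVN}} m(\mv)=\sum_{\me\in{\mED}}\ell_\me+2\sum_{\me\in{\mEN}}\ell_\me$ in the numerator and $q(\mathbf{1}_{{\mVN}})=\sum_{\mv\in{\mVN}}c(\mv)=\sum_{\me\in{\mED}}1/\ell_\me$ in the denominator, reproducing exactly the claimed lower bound.
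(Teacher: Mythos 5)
Your main argument is correct and coincides with the paper's proof: after noting (via the remark following the definition of torsional rigidity) that gluing Dirichlet vertices changes nothing, you glue all vertices of $\mVN$ using Proposition~\ref{prop:surgery-0}(1) to produce a stower with petals $\mEN$ and leaves $\mED$, then invoke formula~\eqref{eq:stower-tors}. The paper does exactly this, except that it skips your preliminary Dirichlet-gluing step since the stower formula already covers the case of distinct Dirichlet leaf vertices; that extra step is harmless but unnecessary.

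Your cross-check deserves a separate comment, because it is in fact a genuinely different (and arguably cleaner) route to the same estimate. Rather than performing graph surgery at the metric level and then computing the torsional rigidity of the resulting stower, it works entirely in the discrete reduction: by Theorem~\ref{thm:discrete-and-metric-torsion} the only thing to be bounded is $\torsd(\mG)$, and feeding the constant function $\mathbf{1}_{\mVN}$ into the discrete variational characterization yields precisely $\bigl(\sum_{\me\in\mED}\ell_\me+2\sum_{\me\in\mEN}\ell_\me\bigr)^2\bigl(\sum_{\me\in\mED}\ell_\me^{-1}\bigr)^{-1}$ as a lower bound. This bypasses both the surgery principle and the explicit stower computation, and it also makes transparent why equality holds for stowers (there $\mVN$ is a singleton, so $\mathbf{1}_{\mVN}$ \emph{is} the discrete torsion function). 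One small caveat worth stating explicitly: the claimed inequality implicitly requires $\mVN\neq\emptyset$; in the degenerate case $\mVN=\emptyset$ the graph is $J_1$ and the right-hand side equals $\ell^3/3>\ell^3/12=T(J_1)$, so the bound fails rather than holding trivially. You flag this case but describe it as one to be ``set aside'' --- it would be cleaner to say that the proposition needs $\mVN\neq\emptyset$, a hypothesis the paper likewise leaves implicit.
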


\begin{proof}
The proof is similar to that of \cite[Theorem~4.2]{KenKurMal16}. First of all, we glue all vertices in ${\mVN}$: by Proposition~\ref{prop:surgery-0}, this can only reduce the torsional rigidity. In this way we obtain a stower whose set of loop edges is $\mE\setminus {\mED}$ and whose set of leaf edges is ${\mED}$. The estimate now immediately follows from~\eqref{eq:stower-tors}.
\end{proof}

\section{Spectral estimates}\label{sec:spectralest}

In this section we are going to slightly change our viewpoint and study upper and lower bounds on products of (suitable powers of) the torsional rigidity and the ground-state energy in terms of metric quantities. Of course, this ansatz can still be understood as a kind of shape optimization assignment, but it additionally paves the road to the use of torsional rigidity as a spectral quantity in its own right, much like the Cheeger constant.

\subsection{Upper estimates: Pólya--Szeg\H{o} Inequality}
We being by observing that the product $\lambda_1(\mathcal G)$ admits an upper bound only depending on the total length of $\Graph$ and its torsional rigidity $T(\mathcal G)$.
%\sout{Combining Theorem~\ref{thm:polya} and [...]
%%\cite[Proposition B.1]{HofKenMug21b} (or~\cite[Theorem~3.8]{DelRos16}) 
%we can deduce that 
%$\lambdaG T(\Graph)\le \frac{\pi^2 |\mE|^2}{3}|\Graph|$: 
%this can be substantially improved.}
%\footnote{\DM{Mittlerweile finde ich diesen Satz überflüssig, denn die Abschätzung ist gar nicht so bahnbrechend und die Beweismethode ohnehin direkt aus \cite{PolSze51} hergeleitet ist.}}

\begin{proposition}\label{prop:polya-type}
For any subset \(\emptyset\neq{{\mVD}}\subset {\mV}\), there holds
	\begin{equation}\label{eq:polya-upper}
\lambda_1(\mathcal G;{{\mVD}})T(\mathcal G;{{\mVD}})< |\mathcal G|.
	\end{equation}
\end{proposition}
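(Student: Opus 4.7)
The plan is to use the torsion function $v$ as a test function in the Rayleigh quotient for $\lambda_1(\Graph;\mVD)$, combined with an integration by parts identity and the Cauchy--Schwarz inequality. Since $v \in H^1_0(\Graph;\mVD)$ by construction, it is admissible in the variational characterization of the ground-state energy.

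First, I would record the key identity
\[
\|v'\|^2_{L^2} = \int_\Graph |v'(x)|^2 \ud x = -\int_\Graph v''(x) v(x) \ud x = \int_\Graph v(x)\ud x = T(\Graph;\mVD),
\]
which follows from edgewise integration by parts; the boundary terms vanish because $v=0$ at the Dirichlet vertices and because $v$ satisfies the Kirchhoff condition~\eqref{eq:kirchh} together with continuity at the natural vertices (so the contributions from the two sides of each non-Dirichlet vertex sum to zero when multiplied by the common value $v(\mv)$). Using $v$ as a test function in the Rayleigh quotient, one obtains
\[
\lambda_1(\Graph;\mVD)\,\|v\|^2_{L^2} \le \|v'\|^2_{L^2} = T(\Graph;\mVD).
\]

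Next, I would apply the Cauchy--Schwarz inequality on the compact metric space $\Graph$ to the pair $(v,\1)$:
\[
T(\Graph;\mVD)^2 = \|v\|_{L^1}^2 = \left(\int_\Graph v(x)\cdot 1\ud x\right)^2 \le |\Graph|\cdot\|v\|^2_{L^2}.
\]
Substituting the bound $\|v\|^2_{L^2}\le T(\Graph;\mVD)/\lambda_1(\Graph;\mVD)$ from the previous step gives
\[
T(\Graph;\mVD)^2 \le |\Graph|\,\frac{T(\Graph;\mVD)}{\lambda_1(\Graph;\mVD)},
\]
and since $T(\Graph;\mVD)>0$ (by Corollary~\ref{cor:positivity-of-torsion} the torsion function is strictly positive off ${{\mVD}}$), we may divide through to conclude $\lambda_1(\Graph;\mVD)\,T(\Graph;\mVD)\le|\Graph|$.

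The only point still to argue is the \emph{strictness} of the inequality. Equality in the Cauchy--Schwarz step would force $v$ to be proportional to the constant function $\1$ on $\Graph$, which is incompatible with $v(\mv)=0$ for the non-empty Dirichlet set ${{\mVD}}$ together with Corollary~\ref{cor:positivity-of-torsion}. Hence the Cauchy--Schwarz inequality is strict, and therefore so is~\eqref{eq:polya-upper}. I do not expect any real obstacle in this argument; the only subtlety worth flagging is the vanishing of the boundary terms in the integration by parts, which relies simultaneously on the Dirichlet conditions on ${{\mVD}}$ and on the Kirchhoff--continuity conditions on ${\mVN}$, i.e., on $v\in D(\Delta_\Graph)$.
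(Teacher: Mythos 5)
Your proof is correct and follows essentially the same approach as the paper's: use the torsion function as the test function in the Rayleigh quotient and apply Cauchy--Schwarz to compare $\|v\|_{L^1}$ with $\|v\|_{L^2}$. The only cosmetic differences are that you make the identity $T(\Graph;\mVD)=\|v'\|_{L^2}^2$ explicit via integration by parts (while the paper invokes the Pólya-quotient characterization $T=\|v\|_{L^1}^2/\|v'\|_{L^2}^2$, which amounts to the same thing), and that you justify strictness by noting a constant cannot vanish on the nonempty Dirichlet set while being positive elsewhere, rather than by appealing to $-v''=1$; both are valid.
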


The corresponding estimate is well-known in the case of open domains $\Omega\subset \R^d$: indeed, the proof in~\cite[Section~5.4]{PolSze51} carries over verbatim to the case of metric graphs, yielding~\eqref{eq:polya-upper}. We write it down for the sake of self-containedness.

\begin{proof}
	Let $v$ be the torsion function  on $\mathcal G$. Because $v$ is also a valid test function for the Rayleigh quotient, the stated inequality follows using the Cauchy--Schwarz inequality and the variational characterization of \(\lambda_1(\mathcal G)\) from
\[
T(\Graph)= \frac{\|v\|^2_{L^1}}{\|v'\|^2_{L^2}} < \frac{|\Graph|\|v\|^2_{L^2}}{\|v'\|^2_{L^2}}\leq \frac{|\mathcal G|}{\lambda_1(\mathcal G)}.
\]
Indeed, the first inequality is strict, since equality in the Cauchy--Schwarz inequality
\[
\|v\cdot \1\|_{L^1}\le \|v\|_{L^2} \|\1\|_{L^2}
\]
holds precisely if $v'$ is a multiple of $\1$: this is not possible, since otherwise $v''=0$, against the definition of torsion.
\end{proof}

{
\begin{remark}
(1) Combining Proposition~\ref{prop:polya-type} and \cite[Theorem~4.4.3]{Plu21b} we get the upper bound
\[
T(\Graph;\mVD)<\Inr(\Graph;\mVD) |\Graph|^2 .
\]
This is partially reminiscent of the inequality
\[
T(\Omega)\le \kappa \Inr(\Omega)^2 |\Omega|,
\]
in terms of the inradius $ \Inr(\Omega)$ and volume $|\Omega|$,
proved by Makai for simply connected ($\kappa=4$, with strict inequality) or convex ($\kappa=\frac{4}{3}$) domains $\Omega\subset \R^2$~\cite{Mak62,Mak66}; see also~\cite[Section~6]{Bra22} for generalizations and historical remarks.

(2) Also, combining Proposition~\ref{prop:polya-type} and Proposition~\ref{prop:estim-low-inr} we obtain the upper estimate
\begin{equation}\label{eq:estimate-inradius-vertices-upper}
\lambda_1(\mathcal G;\mVD)<
%\frac{|\Graph|}{T(\Graph;\mVD)}\leq
 \frac{3|\Graph|(|\mV\setminus {{\mVD}}|+1)^3}{\Inr(\mathcal G;\mVD)^3}.
	\end{equation}
A lower estimate on \(\lambda_1(\mathcal G;\mVD)\) in terms of inradius for a special class of simply connected metric graphs (viz, metric trees with a center point) has been recently observed in~\cite{DufKenMug22}.
\end{remark}}

Let us elaborate on this simple method: in analogy with the case of manifolds with non-empty boundary~\cite{Che70,Yau75}, and following~\cite[Section~6]{Pos09b}, in the case of quantum graphs with non-empty Dirichlet vertex set ${{\mVD}}$ the \emph{Cheeger constant} we define the \emph{Cheeger constant} $h(\Graph;{{\mVD}})$ of $\Graph$ as
\[
h(\Graph):=h(\Graph;{{\mVD}}):=\inf \frac{|\partial \tilde{\Graph}|}{|\tilde{\Graph}|}.
\]
%\footnote{\DM{Streng betrachtet gibt es hier ein Problem. Nicaise beweist die Cheeger-Ungleichung für Quantengraphen OHNE Dirichlet-Ecken (metrischer Graph als randlose Mannigfaltigkeit!). Wiederum beweisen Del Pezzo und Rossi die variationelle Charakterisierung der Cheeger-Konstante für Quantengraphen mit Dirichlet-Ecken, nicht aber die Cheeger-Ungleichung. Man sollte also die Cheeger-Ungleichung neu für Dirichlet beweisen.}}
where the infimum is taken over all non-empty subgraphs $\tilde{\Graph}$ with ${\tilde{\Graph}}\cap {{\mVD}}=\emptyset$,  and $|\partial \tilde{\Graph}|$ is the number of points in the boundary of $\tilde{\Graph}$. (A different definition for graphs without Dirichlet vertices appears in~\cite{Nic87}, but is not appropriate for our purposes.) {A similar but more general result for domains is already known, see~\cite[Theorem 2]{BueErc11}.}

\begin{proposition}\label{prop:cheeger}
There holds
\begin{equation}\label{eq:polya-cheeger}
h^2(\Graph;{{\mVD}})T(\Graph;{{\mVD}})< |\Graph|.
\end{equation}
\end{proposition}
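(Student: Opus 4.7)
The plan is to mimic the proof of Proposition~\ref{prop:polya-type}, replacing the variational characterization of $\lambda_1$ by the one of the Cheeger constant $h(\Graph;\mVD)$ provided by the co-area formula. More precisely, I would use the torsion function $v$ itself as a test function both for $h$ (via its superlevel sets) and for a Cauchy--Schwarz bound on $\int_\Graph |v'|\ud x$.

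First I would recall the following consequence of the co-area formula on metric graphs: for every nonnegative $u\in H^1_0(\Graph;\mVD)$,
\begin{equation*}
\int_\Graph |u'(x)|\ud x=\int_0^\infty \#\partial\{u>t\}\ud t\ge h(\Graph;\mVD)\int_0^\infty |\{u>t\}|\ud t=h(\Graph;\mVD)\int_\Graph u\ud x,
\end{equation*}
where in the middle step we use that, for a.e.\ $t>0$, the superlevel set $\{u>t\}$ is a subgraph whose boundary is a finite set of points disjoint from $\mVD$ (since $u=0$ on $\mVD$), so it is admissible in the infimum defining $h(\Graph;\mVD)$.

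Next I would plug in $u=v$, the torsion function: by Corollary~\ref{cor:positivity-of-torsion} this is nonnegative and belongs to $H^1_0(\Graph;\mVD)$, and $\int_\Graph v\ud x=T(\Graph;\mVD)$, so
\begin{equation*}
h(\Graph;\mVD)\,T(\Graph;\mVD)\le \int_\Graph |v'(x)|\ud x.
\end{equation*}
On the right-hand side I apply the Cauchy--Schwarz inequality together with the identity $\|v'\|_{L^2}^2=\int_\Graph v\,(-\Delta v)\ud x=\int_\Graph v\ud x=T(\Graph;\mVD)$ (obtained by integration by parts, exploiting $-\Delta v=\1$ and the Dirichlet/Kirchhoff conditions satisfied by $v$), yielding
\begin{equation*}
\int_\Graph |v'(x)|\ud x\le |\Graph|^{1/2}\,\|v'\|_{L^2}=|\Graph|^{1/2}\,T(\Graph;\mVD)^{1/2}.
\end{equation*}
Combining the two displays and squaring produces $h(\Graph;\mVD)^2\,T(\Graph;\mVD)\le |\Graph|$.

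The main (and only really delicate) point is the strictness of the inequality. I would argue exactly as in the proof of Proposition~\ref{prop:polya-type}: equality in the Cauchy--Schwarz step $\|v'\cdot\1\|_{L^1}\le \|v'\|_{L^2}\|\1\|_{L^2}$ would force $|v'|$ to be a.e.\ constant on $\Graph$; but this contradicts the fact that $v''=-1$ edgewise, so $v'$ is strictly monotone on every edge and cannot coincide in modulus with a constant. Hence the inequality is strict, which is~\eqref{eq:polya-cheeger}.
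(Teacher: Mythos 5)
Your proof is correct and follows essentially the same route as the paper's: both rest on the variational (co-area) characterization of the Cheeger constant together with a strict Cauchy--Schwarz comparison between $\|v'\|_{L^1}$ and $\|v'\|_{L^2}$. The only cosmetic differences are that you re-derive the Cheeger bound from the co-area formula rather than citing \cite[Theorem~6.1]{DelRos16}, and that you invoke the identity $\|v'\|_{L^2}^2=T(\Graph)$ from integration by parts where the paper instead plugs $v$ into the Pólya quotient $T(\Graph)=\|v\|_{L^1}^2/\|v'\|_{L^2}^2$; these are equivalent uses of the same facts.
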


\begin{proof}
Again, let $v$ be the torsion function  on $\mathcal G$ and observe that, in particular, $v$ is a function of bounded variation vanishing in the Dirichlet vertices of the quantum graph. Again, the stated inequality follows using the Cauchy--Schwarz inequality and the variational characterization of the Cheeger constant in~\cite[Theorem~6.1]{DelRos16} from
\[
T(\Graph)= \frac{\|v\|^2_{L^1}}{\|v'\|^2_{L^2}} < \frac{|\Graph|\|v\|^2_{L^1}}{\|v'\|^2_{L^1}}\le 
%\frac{|\Graph|\|v\|^2_{L^1}}{\|v'\|^2_{BV}}
 \frac{|\mathcal G|}{h^2(\mathcal G)}.
\]
Again, the first inequality is strict, since equality in the Cauchy--Schwarz inequality
\[
\|v'\cdot \1\|_{L^1}\le \|v'\|_{L^2} \|\1\|_{L^2}
\]
holds precisely if $v'$ is a multiple of $1$: this is not possible, since the torsion function  vanishes on ${{\mVD}}$.
\end{proof}

%Clearly, the above proof does not use the fact that $T$ is the torsional rigidity of a \emph{metric graph}. We can hence state the following result, for which we could not find a reference in the literature: its proof is identical to that of Proposition~\ref{prop:cheeger}.
%
%\begin{proposition}\label{prop:cheeger-dom}
%Let $\Omega\subset \R^d$ be an open domain of finite measure. Then
%\begin{equation}\label{eq:polya-cheeger-dom}
%h^2(\Omega)T(\Omega)< |\Omega|.
%\end{equation}
%\end{proposition}

Different but comparable upper bounds for {$d$-dimensional} domains or manifolds can be found in~\cite[Corollary~2.5]{GioSmi10} and~\cite[Theorem~1.10]{CadGalLoi15}.

\begin{example}
(1) It is unclear how sharp the estimate~\eqref{eq:polya-upper} is. 
For stars on $k$ edges with natural conditions in the center and Dirichlet conditions in the leaves (and in particular for intervals with mixed Dirichlet/Neumann conditions, $k=1$; or with both Dirichlet endpoints, $k=2$) we indeed have in view of Example~\ref{exa:basic-tors}.(3)
\[
\lambda_1(\mathcal G)T(\mathcal G)=\frac{\pi^2k^2}{4|{\mathcal S}_k|^2}\frac{|{\mathcal S}_k|^3}{3k^2}=\frac{\pi^2}{12}|{\mathcal S}_k|\approx 0.8225\cdot |{\mathcal S}_k|,\qquad k=1,2,\ldots.
\]

{
In the case of domains $\Omega\subset \R^d$, $d\ge 2$,
%it is known that $\frac{\lambda_1(\Omega)T(\Omega)}{|\Omega|}\le 1$ is not optimal, either. T
the improved estimate
\begin{equation}\label{eq:notopt}
\frac{\lambda_1(\Omega)T(\Omega)}{|\Omega|}\le 1-\frac{2d\omega_d^\frac{2}{d}}{d+2}\frac{T(\Omega)}{|\Omega|^{1+\frac{2}{d}}}
\end{equation}
was proved in~\cite[Theorem~1.1]{BerFerNit16}, where $\omega_d$ is the measure of the ball of unit radius in $\R^d$. (However, again for $d\ge 2$, by~\cite[Thm.~1.2]{BerFerNit16} the quantity $\frac{T(\Omega)\lambda_1(\Omega)}{|\Omega|}$ is known \emph{not} to have a maximizer.) 

For $d=1$,  the formal metric graph counterpart
\[
\frac{\lambdaG T(\Graph)}{|\Graph|}\le 1-\frac{8}{3}\frac{T(\Graph)}{|\Graph|^{3}}\quad\hbox{is not true}
\]
 in general, as one sees already considering the case of path graphs with either mixed or purely Dirichlet boundary conditions.
}
%For $d=1$, in particular, the inequality \eqref{eq:notopt} formally reads
%\[
%\frac{\lambda_1(\Omega)T(\Omega)}{|\Omega|}\le 1-\frac{8}{3}\frac{T(\Omega)}{|\Omega|^{3}}.
%\]
%Indeed, the metric graph counterpart
%\[
%\frac{\lambdaG T(\Graph)}{|\Graph|}\le 1-\frac{8}{3}\frac{T(\Graph)}{|\Graph|^{3}}\quad\hbox{is not true}
%\]
% in general, as one sees already considering the case of an interval with mixed Dirichlet/Neumann boundary conditions. However, for equilateral stars $\mathcal S_k$ with natural conditions in the center and Dirichlet conditions in the leaves we find that
%\[
%\frac{\lambda_1({\mathcal S}_k)T({\mathcal S}_k)}{|{\mathcal S}_k|}\le 1-\frac{2}{3}\frac{T({\mathcal S}_k)}{|{\mathcal S}_k|^{3}},\qquad k=2,3,\ldots
%\]
%does hold, though, as the right hand side agrees with $1-\frac{8}{9k^2}$.
%}

(2) Also the estimate~\eqref{eq:polya-cheeger} seems to be far from sharp. Again for the class of metric graphs considered in (1) (i.e., equilateral stars on $k$ edges with natural conditions in the center and Dirichlet conditions in the leaves), we can reason as in~\cite[Example~6.3]{DelRos16} and immediately see that $h(\Graph)=\frac{k}{|\Graph|}$: in view of Example~\ref{exa:basic-tors}.(3) we conclude that 
\[
h^2({\mathcal S}_k)T({\mathcal S}_k)=\frac{k^2}{|{\mathcal S}_k|^2}\frac{|{\mathcal S}_k|^3}{3k^2}=\frac{1}{3}|{\mathcal S}_k|,\qquad k=1,2,\ldots.
\]

(3) {If, again, $\Graph$ is equilateral, then combining Propositions~\ref{thm:polya-nic} and~\ref{prop:cheeger}
% and hence $|\Graph|=\ell |\mE|$, 
we deduce
\begin{equation}\label{eq:cheeger2}
\begin{split}
h^2(\Graph;{{\mVD}})&<\frac{|\Graph|}{T(\Graph)}\\
%&<\frac{1}{|\Graph|^2} \frac{|\Graph|^3}{T(\Graph;{{\mVD}})}\\
&\le\frac{4|\mE|^2}{|\Graph|^2} \frac{3|{\mED}|}{|{\mED}|+3(|{\mED}|+2|\mE\setminus {\mED}|)^2},
\end{split}
\end{equation}
%then we obtain the estimate
%\[
%\frac{|{\mED}|+3(|{\mED}|+2|\mE\setminus {\mED}|)^2}{12|{\mED}||\mE|^2}\le \frac{T(\Graph)}{|\Graph|^3}.
%\]
this can be compared with the bound $h(\Graph)\le \frac{2|\mE|}{|\Graph|}$ observed in \cite{KenMug16}  for \emph{possibly non-equilateral} quantum graphs \emph{without} any Dirichlet vertices.}
\end{example}

%An overview can be found in~\cite[Chapter~II.2]{Ban80}. Some results, like
%\[
%T(\Omega)\ge \frac{|\Omega|^3}{3|\partial\Omega|^2},
%\]
%can also be extended to $p$-Laplacians~\cite[Thm.~5.1]{DelGav14}.

%In~\cite{BerButVel15}, the upper one was extended to $d\ge 2$ and its sharpness was shown. 
%In the 1-dimensional case it follows directly from~\eqref{eq:tors-1d} that if $\Omega$ is a bounded interval of any length $|\Omega|=a<\infty$, then
%\[
%\frac{T(\Omega)\lambda_1(\Delta^D_\Omega)}{|\Omega|}=\frac{\pi^2}{12}.
%\]

\subsection{Lower estimates: Kohler-Jobin Inequality}

{
Like in~\cite{FilMay12} it can be proved that for any eigenfunction $\varphi$ with associated eigenvalue $\lambda$ of $\Delta_{\Graph}$ the bound
\begin{equation}\label{eq:landscape-abstr-0}
|\varphi(x)|\le |\lambda|\|\varphi\|_\infty v(x)\qquad\hbox{for all }x\in \Graph
\end{equation}
holds. In Section \ref{sec:torsland}, we provide a general framework to obtain this estimate {and, in particular, a slightly sharper version, see Proposition~\ref{eq:stein-0}}.
}

A simple lower estimate on the {ground-state energy} $\lambda_1(\Graph)$ based on the torsion function can be easily deduced from~\eqref{eq:landscape-abstr-0}, taking the supremum of both sides, {see also Corollary~\ref{cor:abstr-giosmi} below.}

% from Proposition~\ref{prop:steiner}.

\begin{corollary}
There holds
%{
%\begin{equation}\label{eq:stein-2}
%1\le \lambdaG  \left\|\frac{\varphi}{v}\right\|_\infty\inf_{t\ge 0}\e^{\lambdaG t}\|\e^{t\Delta_\Graph} v\|_{\infty}.
%\end{equation}
%}
%\end{corollary}
%{
%In particular, 
%\begin{equation}\label{eq:giosmispec-app-new}
%\|\varphi\|_\infty \le \lambdaG \left\|\frac{\varphi}{v}\right\|_\infty \|v\|_\infty:
%\end{equation}
%this sharpens the estimate 
\begin{equation}\label{eq:giosmispec-app}
1 \le \lambdaGVD  \|v\|_\infty.
\end{equation}
\end{corollary}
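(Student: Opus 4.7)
The plan is to apply the pointwise landscape-type estimate~\eqref{eq:landscape-abstr-0} to the ground-state eigenfunction of $\Delta_{\Graph}$ (with Dirichlet conditions on ${{\mVD}}$), and then evaluate at a maximizing point.

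More precisely, let $\varphi \in H^1_0(\Graph;{{\mVD}})$ be a ground-state eigenfunction associated with the eigenvalue $\lambda_1(\Graph;{{\mVD}})$, i.e., $-\Delta_{\Graph}\varphi = \lambda_1(\Graph;{{\mVD}})\varphi$. Since $\Graph$ is compact, $\varphi$ is continuous on $\Graph$ and $\|\varphi\|_\infty$ is attained at some point $x^\star\in\Graph$; after replacing $\varphi$ by $|\varphi|$ (or $-\varphi$) if necessary, we may assume $\varphi(x^\star) = \|\varphi\|_\infty > 0$. Because $\varphi$ vanishes on ${{\mVD}}$, $x^\star \notin {{\mVD}}$, and we may in particular normalize $\varphi$ so that $\|\varphi\|_\infty = 1$.

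Now I apply~\eqref{eq:landscape-abstr-0} to $\varphi$ with $\lambda = \lambda_1(\Graph;{{\mVD}}) > 0$, obtaining
\[
|\varphi(x)| \le \lambda_1(\Graph;{{\mVD}}) \,\|\varphi\|_\infty\, v(x) \qquad \text{for all } x\in\Graph.
\]
Evaluating at $x = x^\star$ yields $\|\varphi\|_\infty \le \lambda_1(\Graph;{{\mVD}}) \,\|\varphi\|_\infty\, v(x^\star)$, so dividing by $\|\varphi\|_\infty > 0$ and bounding $v(x^\star) \le \|v\|_\infty$ gives the assertion $1 \le \lambda_1(\Graph;{{\mVD}})\,\|v\|_\infty$.

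There is no real obstacle here; the entire content of the argument sits in the earlier inequality~\eqref{eq:landscape-abstr-0}, whose proof is deferred to Section~\ref{sec:torsland}. The only mild subtlety is making sure the landscape bound can indeed be applied to the ground-state eigenfunction (as opposed to, say, complex-valued functions or higher eigenfunctions): for this one uses that $\varphi$ can be chosen real and strictly positive on $\Graph\setminus {{\mVD}}$ (by a Perron–Frobenius-type argument for the semigroup $(\e^{t\Delta_\Graph})_{t\ge 0}$, which is positivity-improving), so the absolute values in~\eqref{eq:landscape-abstr-0} cause no trouble.
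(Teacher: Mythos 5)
Your argument is correct and matches the paper's one-line proof ("taking the supremum of both sides" of~\eqref{eq:landscape-abstr-0}): evaluating at a maximizer $x^\star$ of $|\varphi|$ and bounding $v(x^\star)\le \|v\|_\infty$ is exactly the same step written out. The Perron--Frobenius discussion at the end is harmless but unnecessary, since~\eqref{eq:landscape-abstr-0} is stated for an arbitrary eigenpair and already carries absolute values on both sides.
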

%This has been already observed in~\cite[Lemma 1.1]{GioSmi10}.

We have seen in Example~\ref{exa:basic-tors}.(1) that in the case of path graphs with one or two Dirichlet endpoints, the torsion function  $v$ and the lowest Laplacian eigenvalue $\lambda_1$ satisfy
\(
\lambda_1 \|v\|_\infty=\frac{\pi^2}{8}\simeq 1.2337
\). 
For domains, it was conjectured in \cite{Ber12} that \(\frac{\pi^2}{8}\) is the optimal lower bound for the product \(\lambda_1\|v\|_\infty\).
%\footnote{\MP{Ich vermute, dass das auch im Graphen-Fall optimal ist. Sollen wir das hier erwähnen, oder zurückhalten? Ich schätze, dass ein Beweis für Graphen tatsächlich über Graphen-Verdopplung machbar ist.} \DM{Es wäre sicherlich interessant, es kann aber bis zur Version 2.0 des Papers warten. Zunächst Kohler-Jobin und dann arXivierung. Vielleicht können wir ihn auch im follow-up-Paper über unendliche Graphen beweisen.}}
\begin{example}\label{exa:giosmi}
If $\Graph$ is an equilateral star on $k$ edges, then the estimate~\eqref{eq:giosmispec-app}  yields, in view of Example~\ref{exa:basic-tors}.(3),
\[
\lambdaG \ge \frac{2k^2}{|\Graph|^2}:
\]
this can be compared with 
\begin{equation}\label{eq:nicaisebkkm}
\lambdaG \ge \frac{\pi^2}{|\Graph|^2}\quad\hbox{and}\quad \lambdaG \ge \frac{k}{|\Graph|^2},
\end{equation}
cf.~\cite[Lemma~4.2]{BerKenKur17} and~\cite[Theorem~4.4.3]{Plu21b}, which are worse for all $k\ge 3$.
\end{example}

{
\begin{remark}
In the case where $(\lambda,\varphi)$ is an eigenpair of a Schrödinger operator with a random potential, the inequality~\eqref{eq:landscape-abstr-0} is the starting point in Filoche's and Mayboroda's theory of \emph{landscape functions}: it was interpreted in~\cite{FilMay12} as a means to control the localization of the eigenfunctions $\varphi$ by means of the ``landscape'' of the torsion function $v$: i.e., $\varphi$ may be large only in regions where so is $v$, and in fact it was numerically observed that  the regions of localizations for eigenfunctions 
%of Schrödinger operators on a planar domain with Dirichlet boundary conditions
 correspond very well, at low energies, with the regions enclosed by the ``valleys'' of $v$. {The role of \(v\) as a landscape function in a more abstract setting will be further discussed in Section \ref{sec:torsland}.}
 
  If one tries to carry over these ideas to our quantum graph setting, the valleys of the torsion function $v$ are simply given by the discrete set of local minima of $v$.	Unfortunately, by Lemma~\ref{lem:no-local-min} there exist no local minima of the torsion function outside ${{\mVD}}$: this seems to underpin the belief that the scope of the usage of torsion function as a landscape function for metric graphs is rather limited, as long as natural conditions are imposed outside ${{\mVD}}$; {in the last few years, the role of the ``effective potential'' $\frac{1}{v}$ has been emphasized, instead,  see~\cite{ArnDavFil19,
HarMal20}.} 
\end{remark}
}

This section is devoted to prove a lower estimate of different flavor.

A breakthrough in the torsional theory of higher dimensional bodies came in the 1970s, when Kohler-Jobin could finally prove a sharp bound on the product of the torsional rigidity with the square of the {ground-state energy}: this bound, first conjectured by Pólya and Szeg\H{o} in~\cite{PolSze51}, fully established the torsional rigidity as a relevant quantity in spectral geometry and shape optimization. This section is devoted to derive a Kohler-Jobin-type estimate in the context of metric graphs.

While $\frac{\lambdaG  T(\Graph)}{|\Graph|}=\frac{\pi^2}{12}$ for path graphs with either mixed or purely Dirichlet boundary conditions, \cite[Remark~2.4]{BerButVel15} suggests that the positive quantity $\frac{\lambdaG  T(\Graph)}{|\Graph|}$ can be arbitrarily small, as a sequence $(\Graph_n)_{n\in\N}$ of metric graphs (with each $\Graph_n\setminus{{\mVD}}$ disconnected) displaying this behavior can be written down explicitly. 

It turns out that the trouble with the product $\lambdaG T(\Graph)$ is simply a wrong scaling: as suggested by Proposition~\ref{prop:surgery-0}.(6), we rather have to study
\[
\lambdaG T(\Graph)^\frac{2}{3}.
\]
While Remark~\ref{rem:polya-uplow}.(2) shows that a lower bound on this quantity cannot simply be obtained as a straightforward corollary of the classical isoperimetric inequality for the {ground-state energy} of quantum graphs in \cite[Théorème 3.1]{Nic87}, in the case of domains the celebrated Kohler-Jobin Inequality does state that ``balls minimize the {ground-state energy} among sets with given torsional rigidity''. Conjectured in~\cite{PolSze51}, it was proved in~\cite{Koh78} and extended in~\cite{BerIve13,Bra14} to rougher domains and nonlinear operators.
%In the context of metric graphs, it is natural to translate the Kohler-Jobin Inequality and conjecture that $\lambda_1 T^\frac{2}{3}$ is minimal precisely for path graphs of length $|\Graph|$ with both Dirichlet endpoints. 

{
The following  Kohler-Jobin-type inequality for connected, compact, finite quantum graphs is the main result of this section. Unlike in the case of domains, there are \textit{two} isoperimetric inequalities for the {ground-state energy} of $\Delta_{\Graph}$ with ${{\mVD}}=\emptyset$: one for the general (connected) case (\cite[Théorème~3.1]{Nic87}) and one for the \textit{doubly} connected case (\cite[Lemma~4.3]{BerKenKur17}). We have seen in Theorem~\ref{thm:polya} that the same is true of the torsional rigidity: in either case, the optimizers are intervals (connected case) and caterpillar graphs (doubly connected case). The same phenomenon appears in the following, too.
% Now, the issue is that, in comparison with the latter, the minimizer for the former yields a lower value for the {ground-state energy}, but a higher value for the torsional rigidity, so both cases have to be taken care of.

\begin{theorem}\label{thm:kj-brasco-qg}
We have
\begin{equation}\label{eq:kj-vanilla}
\left(\frac{\pi}{\sqrt[3]{24}}\right)^2\le \lambdaGVD T(\Graph;{{\mVD}})^\frac{2}{3},
\end{equation}
with equality if and only if $\Graph$ is a path graph with mixed Dirichlet/Neumann boundary conditions.

If, additionally, $\Graph$ is doubly connected after gluing all Dirichlet vertices, then
\[
\left(\frac{\pi}{\sqrt[3]{12}}\right)^2\le \lambdaGVD T(\Graph;{{\mVD}})^\frac{2}{3},
\]
with equality if and only if $\Graph$ is a caterpillar graph, see Figure~\eqref{fig:caterpillar}. 
\end{theorem}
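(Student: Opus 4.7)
The plan is to adapt the Kohler-Jobin rearrangement technique (as presented, for the Euclidean setting, in~\cite{Bra14}) to the metric graph framework. Fix a positive first eigenfunction $u$ of $-\Delta_\Graph$ with $\int_\Graph u^2 \, dx = 1$, and denote $\lambda := \lambdaGVD$. For $t \in [0, \|u\|_\infty]$, view the superlevel set $\Graph_t := \{x \in \Graph : u(x) > t\}$ as a compact metric graph obtained by imposing Dirichlet conditions at the $n(t)$ points where $u = t$, and set $T(t) := T(\Graph_t)$; then $T$ is continuous and strictly decreasing from $T(\Graph)$ to $0$.

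Introduce the Kohler-Jobin rearrangement $u^\#$ on the interval $J = [0, L_0]$, where $L_0 := (3 T(\Graph))^{1/3}$, equipped with a Dirichlet condition at $L_0$ and a Neumann condition at $0$ (so that $T(J) = T(\Graph)$), defined as the unique strictly decreasing function whose superlevel set $\{u^\# > t\} = [0, L(t))$ satisfies $L(t)^3/3 = T(t)$, that is, $L(t) := (3T(t))^{1/3}$. Then $u^\# \in H^1_0(J; \{L_0\})$ is admissible for the Rayleigh quotient on $J$. The heart of the argument is the Rayleigh quotient comparison
\begin{equation}\label{eq:kjmain}
\frac{\int_0^{L_0} |u^{\#\prime}|^2 \, dy}{\int_0^{L_0} (u^\#)^2 \, dy} \le \frac{\int_\Graph |u'|^2 \, dx}{\int_\Graph u^2 \, dx} = \lambda,
\end{equation}
which, together with the variational characterization $\lambda_1(J) = \pi^2/(4L_0^2) = (\pi/\sqrt[3]{24})^2 \, T(\Graph)^{-2/3}$, immediately yields~\eqref{eq:kj-vanilla}.

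For the denominator in~\eqref{eq:kjmain}, Cavalieri's principle gives $\int_\Graph u^2 \, dx = \int_0^\infty 2t |\Graph_t| \, dt$ and $\int_0^{L_0} (u^\#)^2 \, dy = \int_0^\infty 2t L(t) \, dt$, and Saint-Venant's inequality (Theorem~\ref{thm:polya}) applied level-set by level-set gives $L(t) \le |\Graph_t|$ for every $t$. For the numerator, a coarea-type identity combined with a shape-derivative formula for $T(t)$ of the form $|T'(t)| = \sum_{x \in \{u=t\}} |v_t'(x)|^2/|u'(x)|$, where $v_t$ is the torsion function of $\Graph_t$ (a continuous analog of the Hadamard-type formula of Proposition~\ref{prop:hada}), paired with a Cauchy-Schwarz argument on the finitely many level-set points, yields the pointwise bound needed to control $\int |u^{\#\prime}|^2$ by $\int |u'|^2$ in the appropriate weighted sense.

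The doubly connected case follows the same scheme, but uses the improved Saint-Venant inequality $T(\Graph) \le |\Graph|^3/12$ and rearranges onto an interval $J_1$ of length $(12 T(\Graph))^{1/3}$ with Dirichlet conditions at both endpoints; the refined bound $n(t) \ge 2$ from the proof of Theorem~\ref{thm:polya} enters at the coarea step. Equality cases propagate through the chain of inequalities back to equality in the Saint-Venant estimate used at each level set, which by Theorem~\ref{thm:polya} forces $\Graph$ to be a path graph with mixed Dirichlet/Neumann conditions (resp., a caterpillar graph, cf.~Figure~\ref{fig:caterpillar}). The hard part will be the rigorous derivation of the shape-derivative formula for $T(t)$, especially at the countably many critical levels $t \in u(\mV)$ where the topology of $\Graph_t$ changes; this should be handled separately via absolute continuity of $T$ and a limit argument.
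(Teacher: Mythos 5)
Your proposal correctly identifies the overall Kohler-Jobin strategy, but the rearrangement you define is the ``naive'' one and does not yield the Rayleigh-quotient comparison you need; what is missing is Kohler-Jobin's reparametrization of the level values, which the paper implements via the \emph{modified} torsional rigidity. Concretely, you set $L(t):=(3T(\Graph_t))^{1/3}$ and define $u^\#$ so that $|\{u^\#>t\}|=L(t)$. Saint-Venant (Theorem~\ref{thm:polya}) gives $L(t)\le|\Graph_t|=\alpha_u(t)$, so Cavalieri yields $\int_0^{L_0}(u^\#)^2\,\ud y\le\int_\Graph u^2\,\ud x$: the $L^2$-norm of your rearrangement is \emph{smaller}, which makes the Rayleigh quotient of $u^\#$ \emph{larger}, and the two one-sided bounds $\int|(u^\#)'|^2\le\int|u'|^2$, $\int(u^\#)^2\le\int u^2$ do not imply the needed comparison of quotients. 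This is precisely the obstruction that the genuine Kohler-Jobin construction is designed to circumvent. In the paper (following Brasco) the superlevel-set object is $T(t):=\Tmod(\Graph_t,(\psi-t)_+)$, whose derivative $T'(t)=-\alpha_\psi(t)^2/\gamma_\psi(t)$ comes for free from the coarea formula (Lemmata~\ref{lem:brasco0} and~\ref{lem:bra2}); and the symmetrized function $\psi^*$ is defined via $\psi^*(x)=\kappa(\tau)$ for $|x|=R(\tau)$, where $\kappa$ solves the ODE $-\kappa'(\tau)R(\tau)=-\phi'(\tau)\alpha_\psi(\phi(\tau))$ with $\phi:=T^{-1}$. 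That choice of $\kappa$ forces the Dirichlet integral of $\psi^*$ to equal that of $\psi$ \emph{exactly}, and then the isoperimetric Lemma~\ref{lem:bra3} gives $\phi(\tau)\le\kappa(\tau)$, which is what produces the correct direction $\int(\psi^*)^2\ge\int\psi^2$. Your sketch has no counterpart of $\kappa$; without it, the chain of inequalities does not close.

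Two secondary comments. First, using the actual $T(\Graph_t)$ in place of $\Tmod$ would require you to prove a Hadamard-type formula and the absolute continuity of $t\mapsto T(\Graph_t)$ across the critical levels where the topology of $\Graph_t$ changes -- you flag this as ``the hard part,'' and indeed $\Tmod$ is introduced precisely to avoid it. Second, your Cauchy--Schwarz observation $\gamma_u(t)|T'(t)|\ge\bigl(\sum_{x\in\{u=t\}}|v_t'(x)|\bigr)^2=\alpha_u(t)^2$ is correct (integrating $-v_t''=1$ over $\Graph_t$ gives $\sum_x|v_t'(x)|=\alpha_u(t)$) and, together with Saint-Venant, does give the Dirichlet-energy inequality for the naive rearrangement, but, as explained above, that alone is insufficient.
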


The proof is rather technical and will be subdivided in several Lemmata. We will follow Kohler-Jobin's construction~\cite[Section~1.3.1]{Koh78}  in the modified and generalized version proposed by Brasco in~\cite{Bra14}, thus eluding technical problem like the failure of analyticity of eigenfunctions on metric graphs.
}

{
Throughout this section, let $\psi$ be a positive eigenfunction for the lowest eigenvalue of $-\Delta_\Graph$ on the metric graph $\Graph$ with Dirichlet conditions at ${{\mVD}}$. Moreover, given a function \(u\in H_0^1(\Graph;{{\mVD}})\) with \(u\geq 0\) on \(\Graph\), we define for $t\in [0,\|u\|_\infty]$ the level and superlevel sets
\[
\{u=t\}:=\{x\in\Graph: u(x)=t\}\quad\hbox{and}\quad \{u>t\}:=\{x\in \Graph: u(x)>t\}
\]
as well as
\[
\gamma_u(t):=\sum_{x\in \{u=t\}}|u'(x)|\quad \hbox{and}\quad
\alpha_u(t):=|\{u>t\}|.
\]
%\footnote{\MP{Ist es klar, dass \(\gamma_u(t)\) für fast alle \(t\) wohldefiniert ist?} \DM{Die Torsionfunktion sowie sämtliche Eigenfunktionen nehmen jeden gegeben Wert auf jeder Kante nur endlich oft. Sicherheitshalber würde ich deshalb \emph{annehmen}, dass $\gamma_u(t)<\infty$ für f.a.\ $t$. Dies gesagt: ist $\gamma_u(t)=\infty$, dann erst recht ist $u$ eine Referenzfunktion.} \MP{Dass \(u\) eine Referenzfunction ist, zweifle ich nicht an. Ich bin mir aber nicht sicher, ob man so ohne weiteres die Koarea-Formel verwenden kann. Sollen wir vorsichtshalber annehmen, dass für Referenzfunktionen zusätzlich \(\{u=t\}\) immer endlich ist?}}
As in \cite{Bra14}, we use the following notion of \emph{reference functions}.
\begin{definition}
	A function \(u\in H_0^1(\Graph;{{\mVD}})\) with \(u\geq 0\) on \(\Graph\) is called a \emph{reference function} if the set $\{u=t\}$ is finite for a.e.\ $t\in (0,\|u\|_\infty)$, and additionally
		\[t\mapsto \frac{\alpha_u(t)}{\gamma_u(t)}\in L^\infty(0,\|u\|_\infty).\]
\end{definition}
To begin with, we are going to prove that $\psi\in H^1_0(\Graph;{{\mVD}})$ is indeed a reference function.
Following~\cite[Lemma~3.2, Remark~3.3]{Bra14}, let us observe the following.
\begin{lemma}\label{lem:brasco0}
Let $\psi\in H^1_0(\Graph;{{\mVD}})$ be a positive eigenfunction for the lowest eigenvalue $\lambda_1(\Graph;{{\mVD}})$ of $-\Delta_\Graph$. Then
\[
\sum_{x\in \Grapht}|\psi'(x)|=\lambda_1(\Graph;{{\mVD}})\int_{\Gammat}\psi(x)\ud x\qquad\hbox{for a.e.\ }t\in [0,\|\psi\|_\infty].
\]
\end{lemma}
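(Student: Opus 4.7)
The plan is to derive the identity by integrating the eigenvalue equation $-\Delta_\Graph \psi = \lambdaGVD \psi$ over the open superlevel set $\Gammat$ and combining the edgewise fundamental theorem of calculus with the Kirchhoff conditions. The identity will be established for every $t$ outside a finite exceptional set, which is more than enough for the claimed a.e.\ assertion.

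First I would identify the admissible values of $t$. On each edge $\me$ the restriction $\psi_\me$ satisfies $-\psi_\me'' = \lambdaGVD \psi_\me$, hence is a $C^\infty$ (in fact real-analytic) trigonometric function. In particular $\psi_\me$ has only finitely many critical points on the compact edge $\overline{\me}$, so the set of edgewise critical values of $\psi$ is finite. Let $\mathcal{N} \subset [0,\|\psi\|_\infty]$ denote the union of this finite set with $\psi(\mV)$; then for every $t \in (0,\|\psi\|_\infty) \setminus \mathcal{N}$ the level set $\{\psi=t\}$ is finite, is contained in the interiors of edges, and satisfies $\psi'(x)\ne 0$ at each of its points. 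Since $\mathcal N$ is finite, it suffices to prove the identity for such $t$.

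Fix now such an admissible $t$. The superlevel set $\Gammat$ is an open subset of $\Graph$ whose topological boundary, viewed as a subset of $\Graph \setminus \mVD$, is exactly $\{\psi = t\}$: Dirichlet vertices lie in $\{\psi = 0\} \not\subset \overline{\Gammat}$ since $t > 0$, and natural vertices $\mv$ with $\psi(\mv) > t$ sit in the interior of $\Gammat$. Integrating the edgewise identity $-\psi_\me'' = \lambdaGVD \psi_\me$ over $\Gammat \cap \me$ and summing over $\me \in \mE$, each edgewise boundary contribution produces a term $\pm \psi_\me'$ evaluated at the endpoints of the components of $\Gammat \cap \me$. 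At interior crossing points $x \in \{\psi = t\}$ the function $\psi$ is strictly larger than $t$ on the $\Gammat$-side and equals $t$ at $x$, so the outward (from $\Gammat$) derivative is $-|\psi'(x)|$, contributing $+|\psi'(x)|$ after the sign in $-\int \psi''$. At natural vertices $\mv\in \mVN$ lying in $\Gammat$ the incident contributions $\sum_{\me \in \mE_\mv} \partial_n \psi_\me(\mv)$ cancel by the Kirchhoff condition~\eqref{eq:kirchh}, and Dirichlet vertices give no boundary terms. Summing yields
\[
\lambdaGVD \int_{\Gammat} \psi(x)\,\ud x \;=\; -\sum_{\me\in\mE}\int_{\Gammat\cap\me} \psi_\me''(x_\me)\,\ud x_\me \;=\; \sum_{x\in\{\psi=t\}}|\psi'(x)|,
\]
which is the desired identity.

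The main obstacle I expect is the bookkeeping in the last paragraph: one must verify carefully that for admissible $t$ every natural vertex in $\overline{\Gammat}$ actually lies in the open set $\Gammat$ (so that the Kirchhoff cancellation is available and no spurious boundary terms survive), and that the outward normal at each element of $\{\psi = t\}$ really points opposite to $\psi'$. Both are straightforward once we have excluded the finite set $\mathcal N$ of vertex and critical values, but they are what make the formula genuinely a statement about generic level sets rather than about all $t$.
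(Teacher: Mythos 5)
Your proof is correct, and it takes a genuinely different route from the paper. The paper follows Brasco's scheme from the domain setting: first two integral identities for $\int_{\Gammas}|\psi'|^2\,\mathrm{d}x$ are established — one via the coarea formula, the other via the weak form of the eigenvalue equation applied to the test function $(\psi-s)_+$ — and then Lebesgue differentiation in $s$ is used to recover $\gamma_\psi(t)$ for a.e.\ $t$. Your argument instead integrates the strong equation $-\psi'' = \lambda\psi$ directly over the open superlevel set $\Gammat$, reads off the boundary terms from the fundamental theorem of calculus edge by edge, and observes that the interior-vertex contributions vanish by Kirchhoff while the level-set crossings contribute $+|\psi'(x)|$ each. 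The two approaches trade off differently: the paper's is robust and lifts verbatim from Brasco's domain argument, requiring no structure beyond coarea and weak differentiability, but only delivers the identity a.e.\ in $t$ and forces a differentiation step that is slightly delicate to state cleanly. Your approach exploits the one-dimensional, piecewise-trigonometric nature of metric graph eigenfunctions to reduce everything to elementary calculus; as a bonus it yields the identity for \emph{every} $t$ outside the explicit finite exceptional set $\psi(\mV)\cup\{\text{critical values}\}$, a mildly stronger conclusion, and makes the role of the Kirchhoff condition transparent. One detail worth stating explicitly: at degree-one vertices in $\mVN$ that lie in $\Gammat$ the Kirchhoff condition degenerates to a Neumann condition, so the single boundary term there also vanishes — your phrasing covers this but a reader may stumble, since there is no "cancellation across edges" at such a vertex, only a single term that is itself zero.
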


\begin{proof}
For all $s\in [0,\|\psi\|_\infty]$ the coarea formula yields immediately
\[
\int_{\Gammas}|\psi'(x)|^2\ud x=\int_s^{\|\psi\|_\infty}\sum_{x\in \Graphtau}|\psi'(x)|\ud \tau
=\int_s^{\|\psi\|_\infty}\gamma_\psi(\tau)\ud \tau.
\]
Also, we see that 
\[
\begin{aligned}
\int_{\Gammas}|\psi'(x)|^2\ud x&=\int_{\Graph}\psi'(x)\left((\psi-s)_+\right)'(x)\ud x\\
&=-\int_{\Graph}\Delta_{\Graph} \psi(x)(\psi-s)_+(x)\ud x\\
&=\lambda_1(\Graph;{{\mVD}})\int_{\Graph}\psi(x)(\psi-s)_+(x)\ud x:
\end{aligned}
\]
in particular, for a.e.\ $t\in [0,\|\psi\|_\infty]$ and small $h$ we have
\[
\begin{aligned}
\int_t^{t+h}\gamma_\psi(s)\ud s&=
\int_t^{\|\psi\|_\infty}\gamma_\psi(t)\ud t-\int_{t+h}^{\|\psi\|_\infty}\gamma_\psi(t)\ud s\\
&=\int_{\Gammat}|\psi'(x)|^2\ud x-\int_{\Graph(=t+h)}|\psi'(x)|^2\ud x\\
&=\lambda_1(\Graph;{{\mVD}})\int_{\Graph}\psi(x)\left((\psi-t)_+(x)-(\psi-t-h)_+(x)\right)\ud x.
\end{aligned}
\]
Dividing by $h$ and passing to the limit $h\to 0$ we find
\[
\begin{aligned}
\gamma_\psi(t)&=\lambda_1(\Graph;{{\mVD}})\int_{\Graph}\psi(x)\left((\psi-t)_+\right)'(x)\ud x\\
&=\lambda_1(\Graph;{{\mVD}})\int_{\Gammat}\psi(x)\ud x,
\end{aligned}
\]
as we wanted to prove.
%\footnote{
%{
%\[
%\begin{aligned}
%\int_{\Gammat}\psi(x)\psi'(x)\ud x
%&=
%\int_{\Graph}\psi(x)(\psi-t)'(x){\1}_{\Gammat}\ud x\\
%&=
%\int_{\Graph}\psi(x)\left((\psi-t)_+\right)'(x)\ud x\\
%&=
%-\int_{\Graph}\psi'(x)(\psi-t)_+(x)\ud x\\
%&=
%-\int_{\Gammat}\psi'(x)(\psi(x)-t)\ud x
%\end{aligned}
%\]
%i.e.,
%\[
%\int_{\Gammat}\psi'(x)\psi(x)\ud x=-\frac{t}{2}\int_{\Gammat}\psi'(x)\ud x.
%\]
%}
%}
\end{proof}

\begin{corollary}\label{cor:brasco1}
The first eigenfunction $\psi\in H^1_0(\Graph;{{\mVD}})$ for the lowest eigenvalue $\lambda_1(\Graph;{{\mVD}})$ of $-\Delta_\Graph$ is a reference function.
\end{corollary}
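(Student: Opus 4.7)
The plan is to verify the two defining properties of a reference function separately, using the edgewise smoothness of $\psi$ for the first and the identity from Lemma~\ref{lem:brasco0} for the second.

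\textbf{Finiteness of level sets.} Since $\psi$ is an eigenfunction of $-\Delta_{\Graph}$ associated with $\lambda_1:=\lambda_1(\Graph;{{\mVD}})>0$, it satisfies edgewise the ODE $-\psi_\me''=\lambda_1\psi_\me$. Hence $\psi_\me$ is real analytic on each edge, of the form $A_\me\cos(\sqrt{\lambda_1}\,x)+B_\me\sin(\sqrt{\lambda_1}\,x)$, and in particular non-constant (as $\lambda_1>0$ and $\psi_\me\not\equiv 0$ on any edge touching the interior). A non-constant analytic function on a compact interval has only finitely many preimages of any value; taking the union over the finite edge set, $\{\psi=t\}$ is finite for \emph{every} $t\in(0,\|\psi\|_\infty)$, which is stronger than the required almost-everywhere statement.

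\textbf{$L^\infty$-bound on $\alpha_\psi/\gamma_\psi$.} By Lemma~\ref{lem:brasco0},
\[
\gamma_\psi(t)=\lambda_1\int_{\{\psi>t\}}\psi(x)\ud x\qquad\hbox{for a.e. }t\in(0,\|\psi\|_\infty).
\]
I would split the parameter range into two overlapping subintervals. Fix an arbitrary $t_0\in(0,\|\psi\|_\infty)$. For $t\in[t_0,\|\psi\|_\infty)$, the pointwise estimate $\psi>t$ on $\{\psi>t\}$ gives
\[
\gamma_\psi(t)\ge \lambda_1 t\,\alpha_\psi(t)\ge \lambda_1 t_0\,\alpha_\psi(t),
\]
so $\alpha_\psi(t)/\gamma_\psi(t)\le 1/(\lambda_1 t_0)$. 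For $t\in(0,t_0)$, the monotonicity $\{\psi>t_0\}\subset\{\psi>t\}$ together with $\psi>0$ on ${\mVN}$ yields
\[
\gamma_\psi(t)\ge \lambda_1\int_{\{\psi>t_0\}}\psi\ud x=:C>0,
\]
while $\alpha_\psi(t)\le |\Graph|$; hence $\alpha_\psi(t)/\gamma_\psi(t)\le |\Graph|/C$ on this range. Combining both estimates shows $\alpha_\psi/\gamma_\psi\in L^\infty(0,\|\psi\|_\infty)$, which is the second requirement.

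The main (minor) subtlety is ensuring that the quantity $C$ above is strictly positive. This is where positivity of $\psi$ on $\Graph\setminus {{\mVD}}$ enters: for any $t_0<\|\psi\|_\infty$ the superlevel set $\{\psi>t_0\}$ is nonempty and open, so $\int_{\{\psi>t_0\}}\psi\ud x>0$. No delicate analysis of $\psi$ near its maxima (which may occur at interior points or at natural vertices) is needed, because the blow-up of $1/t$ at $t=0$ is tamed by the uniform upper bound $\alpha_\psi\le |\Graph|$ and the uniform lower bound on $\gamma_\psi$ away from the supremum.
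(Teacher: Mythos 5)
Your argument is correct and follows essentially the same route as the paper: apply Lemma~\ref{lem:brasco0}, use $\psi>t$ on the superlevel set to control $\alpha_\psi/\gamma_\psi$ away from $t=0$, and use a uniform lower bound on $\int_{\{\psi>t\}}\psi$ (via monotonicity of superlevel sets) together with $\alpha_\psi\le|\Graph|$ near $t=0$. The paper simply fixes the splitting point at $t_0=\tfrac12\|\psi\|_\infty$, whereas you keep it arbitrary; this is an inessential difference. One small added value of your write-up is that you explicitly verify the first defining condition of a reference function (finiteness of the level sets $\{\psi=t\}$) via edgewise analyticity of the eigenfunction, a point the paper leaves tacit.
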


\begin{proof}
To begin with, observe that
\[
 \int_{\Gammat} \psi(x)\ud x\ge  t \alpha_\psi(t),
\]
whence by Lemma~\ref{lem:brasco0}
\[
0\le \frac{\alpha_\psi(t)}{\gamma_\psi(t)}=\frac{\alpha_\psi(t)}{\lambda \int_{\Gammat} \psi(x)\ud x}\le \frac{1}{\lambda t}.
\]
In particular, this shows that 
\[
\frac{\alpha_\psi(t)}{\gamma_\psi(t)}\le \frac{2}{\lambda\|\psi\|_\infty}\qquad \hbox{for }t\ge \frac{1}{2}\|\psi\|_\infty.
\]
If however $t<\frac{1}{2}\|\psi\|_\infty$, then 
\[
\int_{\Gammat}\psi(x)\ud x\ge \int_{\{\psi=\frac{1}{2}\|\psi\|_\infty\}}\psi(x)\ud x
\ge \frac{1}{2}\|\psi\|_\infty \left|\alpha_\psi\left(\frac{1}{2}\|\psi\|_\infty\right)\right|>0
\]
and, again by Lemma~\ref{lem:brasco0},
\[
\frac{\alpha_\psi(t)}{\gamma_\psi(t)}\le \frac{2|\Graph|}{\lambda\|\psi\|_\infty}\qquad \hbox{for }t\ge \frac{1}{2}\|\psi\|_\infty.
\]
This concludes the proof.
\end{proof}

We follow Kohler-Jobin\footnote{Strictly speaking, Kohler-Jobin restricts in~\cite{Koh78} to a class $\mathcal C$ that is actually smaller, as it consists of continuously differentiable functions only. In our context this raises a few technical issues, due to the fact that neither the torsion function  nor the eigenfunctions of $\Graph$ are continuously differentiable on the metric space $\Graph$, as long as $\Graph$ contains vertices of degree larger than 2. Brasco observed that defining $\mathcal C$ by means of Lipschitz continuity is sufficient to make sense of the integrals  in $\Tmod$. This relaxation will be crucial for our study.}
 and introduce the following.
 
\begin{definition}
Let $u$ be a reference function. Then the \emph{modified torsional rigidity with respect to $u$} is
\begin{equation}\label{eq:tmod-def}
\begin{split}
\Tmod(\Graph,u)&:=
%2\sup_{g\in \mathcal C} \mathfrak{F}(g\circ u)\\&=
\sup_{g\in \mathcal C} \left(2\int_{\Graph} (g\circ u)(x)\ud x- \int_{\Graph}|(g\circ u)'(x)|^2\ud x\right),
\end{split}
\end{equation}
where
\[
\mathcal C:=\{g\in W^{1,\infty}(0,\|u\|_\infty):g (0)=0\}.
\]
\end{definition}
We will mostly take $u$ to be $\psi$, a positive eigenfunction associated with the lowest positive eigenvalue $\lambda_1$; in this case, we will simply write
\begin{equation}\label{eq:tpsi}
\Tmod(\Graph):=\Tmod(\Graph,\psi).
\end{equation}
Observe that 
\begin{equation}\label{eq:estimate-tormod-and-tor}
	\Tmod(\Graph,u)\le T(\Graph),
\end{equation}
holds for any reference function \(u\) as $(\mathcal C\circ u)\subset H^1_0(\Graph)$. The crucial observation by Kohler-Jobin, which carries over to the Brasco's generalized modified torsional rigidity, is that the supremum in $\Tmod(\Graph,u)$ is actually attained, see Lemma~\ref{lem:bra2}. Before proving it, we observe that the quantity in~\eqref{eq:tmod-def} does not change upon taking the supremum on the even smaller class
\[
\tilde{\mathcal C}:=\{g\in W^{1,\infty}(0,\|u\|_\infty):g (0)=0,\ g'\ge 0\},
\]
i.e., \(\tilde{\mathcal C}\) is the convex cone of non-decreasing functions \(g\in W^{1,\infty}(0,\|u\|_\infty)\) with \(g(0)=0\).

\begin{lemma}\label{lem:bra2-e}
For any reference function \(u\), there holds
\[
\Tmod(\Graph,u)=
\sup_{g\in \tilde{\mathcal C}} \left(2\int_{\Graph} (g\circ u)(x)\ud x- \int_{\Graph}|(g\circ u)'(x)|^2\ud x\right).
\]
\end{lemma}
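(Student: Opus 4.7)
The plan is to show a two-sided inequality: the easy direction $\sup_{\tilde{\mathcal C}}\le \sup_{\mathcal C}$ is immediate from the trivial inclusion $\tilde{\mathcal C}\subset \mathcal C$, so everything reduces to finding, for each $g\in\mathcal C$, a replacement $\tilde g\in\tilde{\mathcal C}$ that performs at least as well in the functional
\[
\Phi(g):=2\int_{\Graph}(g\circ u)(x)\ud x -\int_{\Graph}|(g\circ u)'(x)|^2\ud x.
\]

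The natural candidate is the \emph{antiderivative of the absolute value of the derivative}, namely
\[
\tilde g(t):=\int_0^t |g'(s)|\ud s,\qquad t\in[0,\|u\|_\infty].
\]
First I would verify that $\tilde g\in\tilde{\mathcal C}$: by construction $\tilde g(0)=0$, $\tilde g$ is non-decreasing, and $\tilde g\in W^{1,\infty}(0,\|u\|_\infty)$ with $\tilde g'=|g'|$ almost everywhere, so $\|\tilde g'\|_\infty=\|g'\|_\infty$. Next, the pointwise inequality
\[
\tilde g(t)=\int_0^t |g'(s)|\ud s\ge \left|\int_0^t g'(s)\ud s\right|=|g(t)|\ge g(t),\qquad t\in[0,\|u\|_\infty],
\]
together with $u\ge 0$ on $\Graph$ (and hence $u(x)\in[0,\|u\|_\infty]$ everywhere), yields
\[
\int_{\Graph}(\tilde g\circ u)(x)\ud x\ge \int_{\Graph}(g\circ u)(x)\ud x.
\]

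For the Dirichlet-type term, the chain rule (which applies edgewise, using that $g,\tilde g$ are Lipschitz and $u\in H^1_0(\Graph;{{\mVD}})$) gives $(g\circ u)'=g'(u)\,u'$ and $(\tilde g\circ u)'=\tilde g'(u)\,u'=|g'(u)|\,u'$ almost everywhere. Taking squares we obtain the \emph{equality}
\[
\int_{\Graph}|(\tilde g\circ u)'(x)|^2\ud x=\int_{\Graph}|g'(u(x))|^2|u'(x)|^2\ud x=\int_{\Graph}|(g\circ u)'(x)|^2\ud x.
\]
Combining these two observations, $\Phi(\tilde g)\ge \Phi(g)$, and passing to the supremum over $g\in\mathcal C$ completes the proof of $\sup_{\mathcal C}\Phi\le \sup_{\tilde{\mathcal C}}\Phi$.

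I do not expect any serious obstacle here: the only subtlety is the edgewise application of the chain rule for compositions of a Lipschitz function on $[0,\|u\|_\infty]$ with an $H^1$ function on each edge, but this is standard and in any case is no more delicate than the implicit assumption, already made in the definition of $\mathcal C$, that $g\circ u$ lies in $H^1_0(\Graph;{{\mVD}})$ for every $g\in\mathcal C$.
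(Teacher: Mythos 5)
Your proof is correct and follows essentially the same route as the paper: both replace $g$ by $\tilde g(t)=\int_0^t|g'|$, observe that $\tilde g\in\tilde{\mathcal C}$, $\tilde g\ge g$, and $|\tilde g'|=|g'|$ a.e., and conclude that the functional does not decrease. You actually spell out the two ingredients (pointwise domination for the linear term, exact equality of the Dirichlet terms via the chain rule) more explicitly than the paper, which merely asserts the inequality ``as $g\le\tilde g$''.
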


\begin{proof}
Clearly, it suffices to prove that 
\[
\sup_{g\in \mathcal C} \left(2\int_{\Graph} (g\circ u)(x)\ud x- \int_{\Graph}|(g\circ u)'(x)|^2\ud x\right)\le \sup_{\tilde{g}\in \tilde{\mathcal C}} \left(2\int_{\Graph} (\tilde{g}\circ u)(x)\ud x- \int_{\Graph}|(\tilde{g}\circ u)'(x)|^2\ud x\right).
\]
Indeed, for all $g\in \mathcal C$ we can consider the function \(\tilde g\) given by
\begin{equation}\label{eq:def-g-tilde}
\tilde{g}(t):=\int_0^t |g'(\tau)|\ud \tau;
\end{equation}
because $\tilde{g}'=|g'|$, $\tilde{g}\in \tilde{\mathcal C}$. Now,
\begin{equation}\label{eq:estimate-tormodfunctional-g-gtilde}
2\int_{\Graph} (g\circ u)(x)\ud x- \int_{\Graph}|(g\circ u)'(x)|^2\ud x\le 2\int_{\Graph} (\tilde{g}\circ u)(x)\ud x- \int_{\Graph}|(\tilde{g}\circ u)'(x)|^2\ud x
\end{equation}
as $g\le \tilde{g}$.
\end{proof}

\begin{remark}
	Suppose \(g\in \mathcal C\setminus \tilde{\mathcal C}\), i.e., there is a measurable subset \(A\subset (0,\|\psi\|_\infty)\) with positive measure, so that \(g'<0\) holds almost everywhere on \(A\). Thus, there exists some \(t_0\in(0,\|\psi\|_\infty)\) with \(\tilde{g}(t)>g(t)\) for all \(t>t_0\) where \(\tilde{g}\) is the function defined in \eqref{eq:def-g-tilde}. It follows that the inequality in \eqref{eq:estimate-tormodfunctional-g-gtilde} is strict if \(g\in \mathcal C\setminus \tilde{\mathcal C}\), and therefore a maximizer of \eqref{eq:tmod-def} has to be an element of \(\tilde{\mathcal C}\). The following lemma proves the existence and uniqueness of such a maximizer and characterizes said maximizer.
\end{remark}
\begin{lemma}\label{lem:bra2}
For any  reference function \(u\), let $g_0$ be the function defined by
\[
	g_0(t):=\int_0^{t} \frac{\alpha_u(s)}{\gamma_u(s)}\ud s, \quad t\in (0,\|u\|_\infty).
\]
Then $g_0\in {\mathcal C}$ and
\[
\|g_0\|_1=\Tmod(\Graph,u)=\int_0^{\|\psi\|_\infty} \frac{\alpha_u(t)^2}{\gamma_u(t)}\ud t;
\] 
indeed, $g_0$ is the unique maximizer of \eqref{eq:tmod-def}.
\end{lemma}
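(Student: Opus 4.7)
The plan is to reduce the variational problem defining $\Tmod(\Graph,u)$ to a one-dimensional pointwise optimization via the coarea formula, and then identify the pointwise maximizer explicitly. By Lemma~\ref{lem:bra2-e} it suffices to restrict the supremum to $g\in \tilde{\mathcal C}$, which is convenient because for such $g$ the composition $g\circ u$ is monotone along level sets.

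For $g\in\tilde{\mathcal C}$, I would first rewrite both terms appearing in~\eqref{eq:tmod-def} as one-variable integrals:
\[
\int_\Graph (g\circ u)(x)\ud x=\int_0^{\|u\|_\infty} g'(t)\,\alpha_u(t)\ud t,
\]
which follows from $g(0)=0$ together with Fubini's theorem applied to $g(u(x))=\int_0^{u(x)} g'(t)\ud t$, and, using the coarea formula on $\Graph$ together with the chain rule $(g\circ u)'=g'(u)\,u'$,
\[
\int_\Graph |(g\circ u)'(x)|^2\ud x=\int_0^{\|u\|_\infty} |g'(t)|^2\gamma_u(t)\ud t.
\]
(Here the reference-function assumption guarantees that $\{u=t\}$ is finite for a.e.\ $t$, so the pointwise coarea expression makes sense.) With the substitution $h:=g'\in L^\infty(0,\|u\|_\infty)$, the supremum becomes
\[
\Tmod(\Graph,u)=\sup_{h\ge 0}\int_0^{\|u\|_\infty}\!\!\bigl(2h(t)\alpha_u(t)-h(t)^2\gamma_u(t)\bigr)\ud t.
\]

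Since $\gamma_u(t)>0$ for a.e.\ $t\in(0,\|u\|_\infty)$, the integrand is strictly concave in $h(t)$ and can be maximized pointwise. Completing the square gives
\[
2h(t)\alpha_u(t)-h(t)^2\gamma_u(t)=\frac{\alpha_u(t)^2}{\gamma_u(t)}-\gamma_u(t)\left(h(t)-\frac{\alpha_u(t)}{\gamma_u(t)}\right)^{\!2},
\]
so the unique maximizing choice is $h(t)=\alpha_u(t)/\gamma_u(t)$, which belongs to $L^\infty$ precisely by the reference-function condition. Integrating this $h$ from $0$ yields exactly $g_0$, showing that $g_0\in\tilde{\mathcal C}$ and that
\[
\Tmod(\Graph,u)=\int_0^{\|u\|_\infty}\frac{\alpha_u(t)^2}{\gamma_u(t)}\ud t.
\]
Substituting the same identity $\int_\Graph g_0(u(x))\ud x=\int_0^{\|u\|_\infty} g_0'(t)\alpha_u(t)\ud t$ used above gives $\|g_0\circ u\|_{L^1(\Graph)}=\int_0^{\|u\|_\infty}\alpha_u(t)^2/\gamma_u(t)\ud t$, which is the asserted norm identity. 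Uniqueness of $g_0$ in $\mathcal C$ combines the uniqueness of the pointwise maximizer (strict concavity in $h$) with the observation in the Remark preceding the lemma that any element of $\mathcal C\setminus\tilde{\mathcal C}$ yields a strictly smaller value of the functional than its replacement~\eqref{eq:def-g-tilde}.

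The main technical obstacle I anticipate is justifying the coarea manipulation at the endpoint $t=\|u\|_\infty$, where $\gamma_u$ may vanish and $\alpha_u/\gamma_u$ could a priori blow up; this is precisely where the assumption that $u$ is a reference function is needed, so the two expressions $\int g'\alpha_u\ud t$ and $\int |g'|^2\gamma_u\ud t$ are indeed finite and the pointwise optimization is legitimate on the full interval.
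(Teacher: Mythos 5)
Your argument is correct and follows the same route as the paper's proof: restriction to $\tilde{\mathcal C}$ via Lemma~\ref{lem:bra2-e}, reduction of both terms of the functional to integrals in $t$ via Cavalieri's principle and the coarea formula, and pointwise quadratic optimization in $g'(t)$ (your ``completing the square'' is the same computation as the paper's $\max_{s\in\R}$), with the $L^\infty$ reference-function bound on $\alpha_u/\gamma_u$ guaranteeing admissibility of $g_0$. Your closing remark correctly identifies $\|g_0\|_1$ as $\|g_0\circ u\|_{L^1(\Graph)}$, which the paper leaves implicit.
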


\begin{proof}
We follow \cite[Proposition~3.5]{Bra14}. For all $g\in \tilde{\mathcal C}$ we find
\begin{align*}
\left(2\int_{\Graph} (g\circ u)(x)\ud x- \int_{\Graph}|(g\circ u)'(x)|^2\ud x\right) & = 2\int_0^{\|u\|_\infty}\left( g'(t)\cdot|\{u>t\}|- \frac{g'(t)^2}{2}\sum_{x\in\{u=t\}}|u'(x)|\right)\ud t\\
 & = 2\int_0^{\|u\|_\infty}\left( g'(t)\alpha_u(t)- \frac{g'(t)^2}{2}\gamma_u(t)\right)\ud t
\end{align*}
by Cavalieri's principle (left addend) and the coarea formula (right addend). Now, one immediately checks that the following the pointwise estimate
\[
	 g'(t)\alpha_u(t)- \frac{g'(t)^2}{2}\gamma_u(t)
	\leq \max_{s\in\mathbb R}\left( s\alpha_u(t)- \frac{s^2}{2}\gamma_u(t)\right) = \frac{1}{2}\frac{\alpha_u(t)^2}{\gamma_u(t)}
\]
holds for almost every \(t\in (0,\|u\|_\infty)\) with equality if and only if $g'(t)=\frac{\alpha_u(t)}{\gamma_u(t)}$. We obtain
\[
\left(2\int_{\Graph} (g\circ u)(x)\ud x- \int_{\Graph}|(g\circ u)'(x)|^2\ud x\right)\le \int_0^{\|u\|_\infty}\frac{\alpha_u(t)^2}{\gamma_u(t)}\ud t,
\]
with equality if and only if \(g=g_0\), hence \(g_0\) necessarily defines the unique maximizer of~\eqref{eq:tmod-def} and
\begin{equation}\label{eq:tmod-def-attain}
\begin{split}
\Tmod(\Graph,u)&=\int_0^{\|u\|_\infty}\frac{\alpha_u(t)^2}{\gamma_u(t)}\ud t,
\end{split}
\end{equation}
and it is immediate to check that \(\|g_0\|_{L^1}=\Tmod(\Graph,u)\). This concludes the proof.
\end{proof}
}
We can now derive, as in~\cite[Proposition~3.8]{Bra14}, a fundamental isoperimetric inequality.
\begin{lemma}\label{lem:bra3}
Let \(u\in H^1(\Graph,{{\mVD}})\) be any reference function. If $J_0$ is a path graph with mixed Dirichlet/Neumann conditions with
\[
T(J_0)=\Tmod(\Graph,u),
\]
then $|J_0|\le |\Graph|$, with equality if and only if $\Graph=J_0$.

If, additionally, $\Graph$ is doubly connected after gluing all Dirichlet vertices, and if $J_1$ is an interval with Dirichlet conditions at both endpoints with
\[
T(J_1)=\Tmod(\Graph,u),
\]
then $|J_1|\le |\Graph|$, with equality if and only if $\Graph$ is a caterpillar graph.
\end{lemma}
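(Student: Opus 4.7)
The plan is to adapt Brasco's rearrangement argument from~\cite[Proposition~3.8]{Bra14} to the metric-graph setting, combining the closed-form expression $\Tmod(\Graph,u) = \int_0^{\|u\|_\infty} \frac{\alpha_u(t)^2}{\gamma_u(t)}\,\ud t$ from Lemma~\ref{lem:bra2} with an isoperimetric-type lower bound on $\gamma_u$ that exploits the combinatorial structure of level sets on a graph. The first step is to apply the coarea formula on $\Graph$ to the indicator function of the superlevel set $\{u > s\}$, obtaining
\[
-\alpha_u'(t) = \sum_{x \in \{u=t\}} \frac{1}{|u'(x)|} \qquad \hbox{for a.e. } t \in (0,\|u\|_\infty),
\]
where, by definition of reference function, the sum ranges over a finite set whose cardinality we denote by $n(t)$.

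The Cauchy--Schwarz inequality applied to the identity $n(t) = \sum_{x \in \{u=t\}} \sqrt{|u'(x)|}\cdot\sqrt{|u'(x)|^{-1}}$ then yields $n(t)^2 \le \gamma_u(t)(-\alpha_u'(t))$ for a.e.\ $t$, whence
\[
\Tmod(\Graph,u) \le \int_0^{\|u\|_\infty} \frac{\alpha_u(t)^2(-\alpha_u'(t))}{n(t)^2}\,\ud t.
\]
For the first assertion, the elementary bound $n(t) \ge 1$ for a.e.\ $t$ (immediate since $u$ is continuous, nonnegative, vanishes on ${{\mVD}} \ne \emptyset$, and is not identically zero) gives
\[
\Tmod(\Graph,u) \le \int_0^{\|u\|_\infty} \alpha_u(t)^2(-\alpha_u'(t))\,\ud t = \frac{\alpha_u(0)^3}{3} \le \frac{|\Graph|^3}{3},
\]
and combining this with $T(J_0) = |J_0|^3/3$ from~\eqref{eq:tors-1d-DN} yields $|J_0| \le |\Graph|$ under the assumption $T(J_0) = \Tmod(\Graph,u)$. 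For the second assertion, the same chain of reasoning, but now with $n(t) \ge 2$ for a.e.\ $t \in [0,\|u\|_\infty] \setminus u(\mV)$ (already recalled in the proof of Theorem~\ref{thm:polya} from~\cite[Lemma~3.7]{BerKenKur17}) and with $T(J_1) = |J_1|^3/12$ from~\eqref{eq:tors-1d}, yields $|J_1| \le |\Graph|$.

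The equality analysis requires a topological argument. In the first case, $|J_0| = |\Graph|$ forces equality throughout: $\alpha_u(0) = |\Graph|$ (so $u > 0$ a.e.\ on $\Graph$) and $n(t) = 1$ for a.e.\ $t$, so every regular level set of $u$ is a singleton; this should rule out both branching vertices and nontrivial cycles in $\Graph$, and hence force $\Graph$ itself to be an interval, i.e., $\Graph = J_0$. Analogously, equality in the doubly connected case enforces $n(t) \equiv 2$ off a null set, pinning $\Graph$ down to a $2$-regular pumpkin chain, hence a caterpillar graph after gluing Dirichlet vertices. I expect the main obstacle to be precisely this topological part of the equality analysis: translating a pointwise cardinality condition on level sets into a global graph structure requires some care at natural (Kirchhoff) vertices, where several edges meet and the reference function need not be locally monotone in every branch.
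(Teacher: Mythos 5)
Your proof of the length inequality is correct, but it takes a genuinely different and longer route than the paper's. You essentially re-derive a Saint-Venant-type bound $\Tmod(\Graph,u)\le\frac{|\Graph|^3}{3}$ (resp.\ $\frac{|\Graph|^3}{12}$) from scratch, running coarea and Cauchy--Schwarz directly on the closed-form expression from Lemma~\ref{lem:bra2}. The paper instead simply recalls $T(J_0)=\Tmod(\Graph,u)\le T(\Graph)$ (Equation~\eqref{eq:estimate-tormod-and-tor}) and then invokes the already-established Theorem~\ref{thm:polya}: the chain $T(J_0)\le T(\Graph)\le\frac{|\Graph|^3}{3}$ together with $T(J_0)=\frac{|J_0|^3}{3}$ gives $|J_0|\le|\Graph|$ in two lines. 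Your route is more self-contained but duplicates the work already done in the proof of Theorem~\ref{thm:polya} — the bounds $n(t)\ge 1$, $n(t)\ge 2$ and the integration against $-\alpha_u'$ are exactly the ingredients used there. It also incurs a small technical cost you glide over: a reference function $u$ may have plateaus, so $\alpha_u$ is only of bounded variation and may jump, and the identity $\int_0^{\|u\|_\infty}\alpha_u(t)^2(-\alpha_u'(t))\ud t=\frac{\alpha_u(0)^3}{3}$ must be downgraded to an inequality $\le$ (fortunately with the correct sign).

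The equality characterization is where you have a genuine gap, and you flag it yourself. Having equality forces $n(t)=1$ (resp.\ $n(t)=2$) a.e.\ and $\alpha_u(0)=|\Graph|$, but converting these pointwise level-set conditions on an \emph{arbitrary} reference function $u$ into the rigidity statement $\Graph=J_0$ is not routine: $u$ carries none of the PDE structure of the torsion function, its behavior at natural vertices is unconstrained, and one also has to rule out plateaus. The paper sidesteps all of this: from $|J_0|=|\Graph|$ and the collapsed chain $T(J_0)=\Tmod(\Graph,u)\le T(\Graph)\le\frac{|\Graph|^3}{3}=\frac{|J_0|^3}{3}=T(J_0)$, one reads off $T(\Graph)=\frac{|\Graph|^3}{3}$, and the equality case of Theorem~\ref{thm:polya} — which analyzes the \emph{torsion function} $v$ of $\Graph$, not $u$ — then delivers $\Graph=J_0$ (resp.\ the caterpillar conclusion). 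The cleanest repair of your argument is to borrow this reduction at the last step: deduce $T(\Graph)=\frac{|\Graph|^3}{3}$ from~\eqref{eq:estimate-tormod-and-tor} and cite Theorem~\ref{thm:polya}, rather than trying to extract the graph's structure from level-set cardinalities of $u$.
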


%Observe that if $\Graph=J_1$, then the eigenfunction $\psi$ is necessarily radially symmetric.

\begin{proof}
By the Faber--Krahn-type inequality for torsional rigidity in Remark~\ref{rem:polya-uplow}.(3) we know that
\[
1\le \frac{T(\Graph)}{T(J_0)}\le \frac{|\Graph|^3}{|J_0|^3 };
\]
and likewise
\[
1\le \frac{T(\Graph)}{T(J_1)}\le \frac{|\Graph|^3}{|J_1|^3 },
\]
in the doubly connected case, upon  gluing all vertices in ${{\mVD}}$.

If moreover $|J_0|=|\Graph|$ in the general case (resp., $|J_1|=|\Graph|$ in the doubly connected case), then $T(J_0)=T(\Graph;{{\mVD}})$ (resp., $T(J_1)=T(\Graph;{{\mVD}})$)  and we know from Theorem~\ref{thm:polya} that this implies that $J_0=\Graph$ (resp., $\Graph$ is caterpillar graph).
%To begin with, let us recall two Faber--Krahn-type inequalities proved in~\cite[Lemma~3]{Fri05} and~\cite[Lemma~4.3]{BerKenKur17}: 
%\[
%\frac{\pi^2}{4}\le \lambdaG |\Graph|^2
%\]
%with equality if and only if $\Graph$ is a path graph with mixed Dirichlet/Neumann conditions. Furthermore, if $\Graph$ is doubly connected after gluing all Dirichlet vertices, then 
%\[
%\pi^2\le \lambdaG |\Graph|^2,
%\]
%with equality if and only if $\Graph$ is an interval with two Dirichlet endpoints, or else a 2-regular pumpkin chain with two edges of equal length attached to one of the endpoints, with Dirichlet conditions at both leaves.
\end{proof}

Let us now complete the last step before attacking the proof of the Kohler-Jobin Inequality: to this aim, we will need to construct the symmetrization $\psi^*$ of the the eigenfunction $\psi$. We see next how to perform this construction, which differs from the classical Schwarz symmetrization. 

\begin{lemma}\label{lem:bra4}
% If $B$ is an interval with mixed Dirichlet/Neumann conditions at both endpoints and
%\[
%T(B)=\Tmod(\Graph),
%\]
%then 
There exists a monotonically decreasing function ${\psi^*}\in H^1\left(0,\sqrt[3]{3\Tmod(\Graph)}\right)$ with ${\psi^*}(\sqrt[3]{3\Tmod(\Graph)})=0$, {where $\Tmod(\Graph)$ is defined as in~\eqref{eq:tpsi},} such that
\begin{equation}\label{eq:kj-rearr-0}
\int_0^{\sqrt[3]{3\Tmod(\Graph)}} |{{\psi^*}}'(x)|^2\ud x=\int_{\Graph} |\psi'(x)|^2\ud x\quad \hbox{and}\quad 
\int_0^{\sqrt[3]{3\Tmod(\Graph)}} |\psi ^*(x)|^2\ud x\ge \int_{\Graph} |\psi (x)|^2 \ud x.
\end{equation}
The second inequality in~\eqref{eq:kj-rearr-0} is actually an equality if and only if $\Graph$ is a path graph of length $\sqrt[3]{3\Tmod(\Graph)}$.

Let, additionally, $\Graph$ be doubly connected after gluing all Dirichlet vertices. 

Then there exists a function ${\psi^*}\in H^1_0\left(-\frac{1}{2}\sqrt[3]{12\Tmod(\Graph)},\frac{1}{2}\sqrt[3]{12\Tmod(\Graph)}\right)$ that is
\begin{itemize}
\item  monotonically decreasing on $[0,\frac{1}{2}\sqrt[3]{12\Tmod(\Graph)}]$, 
\item symmetric about $0$, and
\item such that
\begin{equation}\label{eq:kj-rearr}
\int_0^{\sqrt[3]{12\Tmod(\Graph)}} |{{\psi^*}}'(x)|^2\ud x=\int_{\Graph} |\psi'(x)|^2\ud x\quad \hbox{and}\quad 
\int_0^{\sqrt[3]{12\Tmod(\Graph)}} |\psi ^*(x)|^2\ud x\ge \int_{\Graph} |\psi (x)|^2 \ud x.
\end{equation}
\end{itemize}
The second inequality in~\eqref{eq:kj-rearr} is an equality if and only if $\Graph$ is a caterpillar graph of length $\sqrt[3]{12\Tmod(\Graph)}$.
\end{lemma}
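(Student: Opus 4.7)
We plan to adapt the Kohler--Jobin rearrangement in Brasco's reformulation \cite[Section~4]{Bra14} to the one-dimensional setting of metric graphs. We describe the general (non-doubly-connected) case; the doubly-connected case is analogous, replacing $J_0$ by $J_1$ and extending the rearranged function symmetrically about the midpoint. Set $L_0 := \sqrt[3]{3\Tmod(\Graph)}$ and parametrize $J_0 \simeq [0, L_0]$ so that its torsion function (with Dirichlet condition at $L_0$) is $v_0(x) = (L_0^2 - x^2)/2$, strictly decreasing with $v_0(L_0) = 0$, and satisfies $T(J_0) = L_0^3/3 = \Tmod(\Graph)$. Let $g_0 \in \tilde{\mathcal C}$ be the maximizer from Lemma~\ref{lem:bra2} applied to $\psi$; it is strictly increasing on $[0, \|\psi\|_\infty]$ with $g_0(0) = 0$. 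We define $\psi^* \colon [0, L_0] \to [0, \|\psi\|_\infty]$ implicitly via the identity $g_0(\psi^*(x)) = v_0(x)$, after reconciling the ranges of $v_0$ and $g_0$ (setting $\psi^* \equiv \|\psi\|_\infty$ on an initial subinterval of $[0, L_0]$ whenever $g_0(\|\psi\|_\infty) < \|v_0\|_\infty$).

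Monotonicity and the boundary value $\psi^*(L_0) = 0$ then follow directly from the construction. For the Dirichlet energy identity in \eqref{eq:kj-rearr-0}, differentiating $g_0 \circ \psi^* = v_0$ yields $(\psi^*)'(x) = v_0'(x)/g_0'(\psi^*(x))$; substituting $g_0'(t) = \alpha_\psi(t)/\gamma_\psi(t)$ from Lemma~\ref{lem:bra2}, changing variables $t = \psi^*(x)$, and invoking the coarea formula on $\Graph$ reduces $\int_0^{L_0}|(\psi^*)'|^2 \ud x$ to $\int_0^{\|\psi\|_\infty} \gamma_\psi(t) \ud t = \int_\Graph |\psi'|^2 \ud x$. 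For the $L^2$-comparison, Cavalieri's principle rewrites the two sides as
\[
\int_\Graph \psi^2 \ud x = 2\int_0^{\|\psi\|_\infty} t\,\alpha_\psi(t)\,\ud t, \qquad \int_0^{L_0}(\psi^*)^2 \ud x = 2\int_0^{\|\psi\|_\infty} t\,\alpha_{\psi^*}(t)\,\ud t,
\]
so the inequality reduces to the pointwise bound $\alpha_{\psi^*}(t) \geq \alpha_\psi(t)$ for a.e.\ $t$. This bound is to be established by applying the Faber--Krahn-type estimate for torsional rigidity from Remark~\ref{rem:polya-uplow}(3) to the superlevel subgraph $\{\psi > t\}$ (equipped with induced Dirichlet conditions on $\{\psi = t\}$) and comparing with the corresponding interval in $J_0$ via the Kohler--Jobin relation.

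The equality characterization follows by tracing equality through the above Faber--Krahn step: equality at a.e.\ level $t$ forces each superlevel set $\{\psi > t\}$ to be an interval (respectively, caterpillar-like in the doubly-connected case), which upon letting $t \to 0$ forces $\Graph$ itself to be an interval with mixed conditions (respectively, a caterpillar graph) of the stated length. The principal obstacle will be the localization in the $L^2$-inequality: since $\{\psi > t\}$ is not a metric graph satisfying Assumption~\ref{ass:graph} in the strict sense (its ``boundary'' generically lies in the interior of edges rather than at vertices), one must extend the Faber--Krahn-type estimate for torsional rigidity to this slightly broader setting while preserving the equality characterization. An alternative route would be to verify $\alpha_{\psi^*}(t) \geq \alpha_\psi(t)$ directly from the explicit relation $g_0 \circ \psi^* = v_0$, but this requires a careful analog of Brasco's core isoperimetric estimate \cite[Proposition~4.1]{Bra14} in the graph setting.
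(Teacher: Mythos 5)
Your construction $g_0\circ\psi^*=v_0$ is not the Kohler--Jobin rearrangement and does not preserve the Dirichlet energy. Differentiating the defining identity gives $(\psi^*)'(x)=-x/g_0'(\psi^*(x))$, and after the change of variables $t=\psi^*(x)$ (so that $x=\sqrt{L_0^2-2g_0(t)}$) one finds
\[
\int_0^{L_0}|(\psi^*)'(x)|^2\ud x=\int_0^{\psi^*(0)}\gamma_\psi(t)\cdot\frac{\sqrt{L_0^2-2g_0(t)}}{\alpha_\psi(t)}\ud t,
\]
which equals $\int_0^{\|\psi\|_\infty}\gamma_\psi(t)\ud t=\int_\Graph|\psi'|^2\ud x$ only if $\alpha_\psi(t)^2=L_0^2-2g_0(t)$ for a.e.\ $t$; differentiating that relation would force $-\alpha_\psi'(t)\gamma_\psi(t)=1$, which fails as soon as $\{\psi=t\}$ contains two or more points (by AM--HM the product is then $\ge 4$). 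The $L^2$-step has an even more basic problem: the target bound $\alpha_{\psi^*}(t)\ge\alpha_\psi(t)$ is false in general. At $t\to 0^+$ it reads $L_0\ge|\Graph|$, whereas Theorem~\ref{thm:polya} together with $\Tmod(\Graph)\le T(\Graph)$ gives $L_0=\sqrt[3]{3\Tmod(\Graph)}\le|\Graph|$, with equality only when $\Graph$ is already the interval $J_0$. So the pointwise comparison of measures of superlevel sets at the same height, which is what you need for Cavalieri, goes the wrong way.

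The correct rearrangement reparametrizes by the modified torsional rigidity of the superlevel sets, not by $g_0$: set $T(t):=\Tmod(\{\psi>t\},(\psi-t)_+)=\int_t^{\|\psi\|_\infty}\alpha_\psi(s)^2/\gamma_\psi(s)\ud s$, let $\phi:=T^{-1}$, introduce $R(\tau)$ via $R(\tau)^3/3=\tau$, and define $\psi^*$ by $\psi^*(R(\tau)):=\kappa(\tau)$, where $\kappa$ solves the first-order problem $-\kappa'(\tau)R(\tau)=-\phi'(\tau)\alpha_\psi(\phi(\tau))$, $\kappa(\Tmod(\Graph))=0$. With this choice the Dirichlet energy identity comes out exactly, and the $L^2$-inequality follows from Lemma~\ref{lem:bra3} applied at \emph{matching torsional-rigidity levels}: one obtains $\alpha_\psi(\phi(\tau))\ge\alpha_{\psi^*}(\kappa(\tau))$, hence $-\phi'\le-\kappa'$ and so $\phi\le\kappa$, which is a comparison of heights rather than of distribution functions at a common height. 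Your last concern — that $\{\psi>t\}$ has ``boundary'' inside edges — is in fact harmless: promote the level-$t$ points to degree-one Dirichlet vertices and Theorem~\ref{thm:polya} and Lemma~\ref{lem:bra3} apply unchanged.
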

\begin{proof}
We now follow~\cite[Proposition~4.1]{Bra14}. The main idea of the proof is to consider the modified torsional rigidity of the set $\Gammat$, however with respect to the modified function $(\psi-t)_+$ restricted to the subgraph \(\Graph_t=\Gammat\) with Dirichlet conditions imposed in the boundary \(\partial \Graph_t\). The modified torsional rigidity  $\Tmod(\Graph_t,(\psi-t)_+)$ can be computed using the fact that
\begin{equation}\label{eq:alpha-psi-modified-function}
\alpha_{(\psi-t)_+}(s)=\alpha_\psi(t+s)\quad \text{and} \quad
\gamma_{(\psi-t)_+}(s)=\gamma_\psi(t+s)
\end{equation}
for \(s\in [0,\|\psi\|_\infty-t]\). {Corollary \ref{cor:brasco1} and \eqref{eq:alpha-psi-modified-function} yield that \((\psi-t)_+\) is a reference function on \(\Graph_t\).} Using Lemma \ref{lem:bra2}, we obtain
\[
T(t):=\Tmod(\Graph_t,(\psi-t)_+)=\int_0^{\|\psi\|_\infty-t} \frac{\alpha_{(\psi-t)_+}(s)^2}{\gamma_{(\psi-t)_+}(s)}\ud s=\int_t^{\|\psi\|_\infty} \frac{\alpha_\psi(\tau)^2}{\gamma_\psi(\tau)}\ud \tau;
\]
in particular $T\in W^{1,\infty}(0,\|\psi\|_\infty)$ and 
\[
T'(t)=-\frac{\alpha_\psi(t)^2}{\gamma_\psi(t)}<0\qquad \hbox{for a.e.\ }t\in [0,\|\psi\|_\infty].
\]
It follows that $T$ is invertible as a function between $[0,\|\psi\|_\infty]$ and its range $[0,{T_{\mathrm{mod}}(\Graph)}]$; we thus consider $\phi:=T^{-1}\in W^{1,\infty}$ and find, by the coarea formula and a change of variables,
\[
\int_{\Graph}|\psi'(x)|^2 \ud x=-\int_0^{T_{\mathrm{mod}}(\Graph)} \phi'(\tau)\sum_{x\in \{\psi=\phi(\tau)\}} |\psi'(x)| \ud \tau=-\int_0^{{T_{\mathrm{mod}}(\Graph)}} \phi'(\tau)\gamma_\psi(\phi(\tau)) \ud \tau
\]
and
\[
\phi'(\tau)=-\frac{\gamma_\psi(\phi(\tau))}{\alpha_\psi(\phi(\tau))^2}.
\]
We conclude that
\begin{equation}\label{eq:KJ-Dirichlet-Form-Symmetrized}
\int_{\Graph}|\psi'(x)|^2\ud x=\int_0^{{T_{\mathrm{mod}}(\Graph)}}\alpha_\psi(\phi(\tau))^2\phi'(\tau)^2\ud \tau.
\end{equation}
Kohler-Jobin's original idea was to rearrange the values of $\psi$ using the torsional rigidity of its superlevel sets $\Gammat$. In our context, the first step is to introduce the function $R$ that maps each $\tau\in [0,{T_{\mathrm{mod}}(\Graph)}]$ to the unique $R(\tau)\in \mathbb R$ such that 
\begin{equation}\label{eq:KJ-formula-for-R-doub}
	\tau\stackrel{!}{=}T\left([-R(\tau),R(\tau)]\right)=\frac{2 R(\tau)^3}{3}
\end{equation}
in the doubly connected case where we impose Dirichlet conditions at \(\pm R(t)\), or else
\begin{equation}\label{eq:KJ-formula-for-R-gen}
	\tau\stackrel{!}{=}T\left([0,R(\tau)]\right)=\frac{R(\tau)^3}{3}.
\end{equation}
in the general case where we impose Dirichlet conditions at \(R(t)\) and Neumann conditions at \(0\); i.e., the range of \(R(\tau)\) is \([0,\frac{1}{2}\sqrt[3]{12{T_{\mathrm{mod}}(\Graph)}}]\) in the doubly connected case and \([0,\sqrt[3]{3{T_{\mathrm{mod}}(\Graph)}}]\) in the general case.

We are finally in the position to introduce the Kohler-Jobin symmetrization of $\psi$: this is the function \(\psi^*\) with domain whose domain is \([-\frac{1}{2}\sqrt[3]{12{T_{\mathrm{mod}}(\Graph)}},\frac{1}{2}\sqrt[3]{12{T_{\mathrm{mod}}(\Graph)}}]\) in the doubly connected case and \([0,\sqrt[3]{3{T_{\mathrm{mod}}(\Graph)}}]\) in the general case and that is given by
\[
\psi^*(x)=\kappa(\tau)\quad\hbox{ whenever } |x|=R(\tau),
\]
where the function $\kappa$ will be chosen as the solution of 
\begin{equation}
	\left\{\begin{aligned}
		-\kappa'(\tau)\cdot 2R(\tau) & =-\phi'(\tau)\alpha_\psi(\phi(\tau))\\
		\kappa({T_{\mathrm{mod}}(\Graph)}) & =0	
	\end{aligned}\right.
\end{equation}
in the doubly connected case, and
\begin{equation}
	\left\{\begin{aligned}
		-\kappa'(\tau)\cdot R(\tau) & =-\phi'(\tau)\alpha_\psi(\phi(\tau))\\
		\kappa({T_{\mathrm{mod}}(\Graph)}) & =0	
	\end{aligned}\right.
\end{equation}
in the general case, respectively.
Then, by choice of \(\psi^*\) we have \(\alpha_{\psi^*}(\kappa(\tau))=2R(\tau)\) in the doubly connected case, and \(\alpha_{\psi^*}(\kappa(\tau))=R(\tau)\) in the general case, and we obtain
\begin{equation}\label{eq:compare-psi-and-kappa}
	-\kappa'(\tau)\cdot \alpha_{\psi^*}(\kappa(\tau))  =-\phi'(\tau)\alpha_\psi(\phi(\tau)).
\end{equation}
Now, to abbreviate the notation, we set \(J:=[-\frac{1}{2}\sqrt[3]{12{T_{\mathrm{mod}}(\Graph)}},\frac{1}{2}\sqrt[3]{12{T_{\mathrm{mod}}(\Graph)}}]\) in the doubly connected case, and \(J:=[0,\sqrt[3]{3{T_{\mathrm{mod}}(\Graph)}}]\) in the general case. Then, using \eqref{eq:KJ-formula-for-R-doub} and \eqref{eq:KJ-formula-for-R-gen} 
it can be shown that 
\begin{equation*}
	|(\psi^*)'(x)|=2\kappa'(\tau)R(\tau)^2=\frac{1}{2}\kappa'(\tau) \alpha_{\psi^*}(\kappa(\tau))^2
\end{equation*}
and
\begin{equation*}
		|(\psi^*)'(x)|=\kappa'(\tau)R(\tau)^2=\kappa'(\tau) \alpha_{\psi^*}(\kappa(\tau))^2
\end{equation*}
holds for all \(x\in J\) with \(|x|=R(\tau)\) in the doubly connected and general case, respectively. Then, using the coarea formula along with \eqref{eq:compare-psi-and-kappa} and \eqref{eq:KJ-Dirichlet-Form-Symmetrized} we obtain
\begin{align*}
	\int_J |{\psi^*}'(x)|^2\ud x & =\int_0^{T_{\mathrm{mod}}(\Graph)} \kappa'(\tau) \sum_{x\in \{\psi^*=\kappa(\tau)\}} |{\psi^*}'(x)|\ud \tau\\
	& = \int_0^{T_{\mathrm{mod}}(\Graph)}\kappa'(\tau)^2\alpha_{\psi^*}(\kappa(\tau))^2 \ud \tau\\
	& = \int_0^{{T_{\mathrm{mod}}(\Graph)}}\alpha_\psi(\phi(\tau))^2\phi'(\tau)^2\ud \tau\\
	& = \int_{\Graph}|\psi'(x)|^2\ud x.
\end{align*}
Now, note that for \(\tau\in[0,{T_{\mathrm{mod}}(\Graph)}]\) we have
	\[\tau = \Tmod(\Graph_{\phi(\tau)},(\psi-\phi(\tau))_+)= T(J\cap [-R(\tau),R(\tau)]).\]
Therefore, Lemma~\ref{lem:bra3} yields \(\alpha_{\psi}(\phi(\tau))\geq \alpha_{\psi^*}(\kappa(\tau))\). (In the doubly connected case, we use that \(\Graph_t\) is doubly connected after gluing all points in \(\partial \Graph_t\).) Using \eqref{eq:compare-psi-and-kappa}, we infer \(-\phi'(\tau)\leq  -\kappa'(\tau)\) which in turn yields -- after integrating from \(\tau\) to \({T_{\mathrm{mod}}(\Graph)}\) -- that \(\phi(\tau)\leq \kappa(\tau)\). Thus, Cavalieri's principle gives
\begin{align*}
	\int_{\Graph}|\psi(x)|^2\ud x & = 2\int_0^{\|\psi\|_\infty}t \alpha_\psi(t) \ud t \\
	& = -\int_0^{T_{\mathrm{mod}}(\Graph)} \phi(\tau) \phi'(\tau) \alpha_\psi(\phi(\tau)) \ud \tau \\
	& \leq -\int_0^{T_{\mathrm{mod}}(\Graph)} \kappa(\tau) \kappa'(\tau) \alpha_{\psi^*}(\kappa(\tau)) \ud \tau \\
	& = \int_J|\psi^*(x)|^2\ud x.
\end{align*}

To conclude the proof, we observe that equality in the previous inequality yields \(\alpha_\psi(\phi(t))=\alpha_{\psi^*}(\kappa(\tau))\) for all \(\tau\). By Lemma \ref{lem:bra3} this is only possible if \(\Graph_t\) is a caterpillar graph in the doubly connected case or a path graph in the general case.
\end{proof}

{
We are now finally in the position to complete the proof of Theorem~\ref{thm:kj-brasco-qg}.
}
% Unlike in the case of domains, there are two isoperimetric inequalities for the {ground-state energy} of $\Delta_{\Graph}$ with ${{\mVD}}=\emptyset$: one for {{{the general case}}} (\cite[Théorème~3.1]{Nic87}) and one for the doubly connected case (\cite[Lemma~4.3]{BerKenKur17}). Now, the issue is that, in comparison with the latter, the minimizer for the former yields a lower value for the {ground-state energy}, but a higher value for the torsional rigidity, so both cases have to be taken care of.

\begin{proof}[Proof of Theorem~\ref{thm:kj-brasco-qg}]
We now adapt the proof of \cite[Theorem~1.1]{Bra14}, see \cite[Section~5]{Bra14}.

To begin with, we consider the interval 
\[
J_0=\left[0,\sqrt[3]{3\Tmod(\Graph)}\right]\ :
\]
 by Lemma~\ref{lem:bra4} we can consider the decreasing function ${\psi^*}\in H^1(J_0)$ with ${\psi^*}(\sqrt[3]{3\Tmod(\Graph)})=0$ that satisfies~\eqref{eq:kj-rearr-0}. Therefore,
\begin{equation}\label{eq:chain-inequalities}
\left(\frac{\pi}{\sqrt[3]{24}}\right)^2=\lambda_1({J_0})T(J_0)^\frac{2}{3}\le \frac{\|{{\psi^*}}'\|_2^2}{\|{{\psi^*}}\|_2^2}\Tmod(\Graph)^\frac{2}{3}\le \frac{\|{\psi}'\|_2^2}{\|{\psi}\|_2^2}\Tmod(\Graph)^\frac{2}{3}\le \lambdaG T(\Graph)^\frac{2}{3}
\end{equation}
{where we used the min-max principle and Lemma~\ref{lem:bra3} for the first inequality, Lemma~\ref{lem:bra4} for the second inequality, and the min-max principle and~\eqref{eq:estimate-tormod-and-tor} for the third inequality.}

Likewise, in the doubly connected case we consider the interval
	\[J_1=\left[-\frac{1}{2}\sqrt[3]{12\Tmod(\Graph)},\frac{1}{2}\sqrt[3]{12\Tmod(\Graph)}]\right],\]
impose Dirichlet conditions on both endpoints, and consider the radially symmetric function ${\psi^*}\in H^1_0\left(J_1\right)$ that satisfies~\eqref{eq:kj-rearr}.
Accordingly, {using the same arguments as in \eqref{eq:chain-inequalities},} we obtain
\[
\left(\frac{\pi}{\sqrt[3]{12}}\right)^2=\lambda_1({J_1})T(J_1)^\frac{2}{3}\le \frac{\|{{\psi^*}}'\|_2^2}{\|{{\psi^*}}\|_2^2}\Tmod(\Graph)^\frac{2}{3}\le \frac{\|{\psi}'\|_2^2}{\|{\psi}\|_2^2}\Tmod(\Graph)^\frac{2}{3}\le \lambdaG T(\Graph)^\frac{2}{3}.
\]
Equality in both chains of inequalities implies that 
\[
\frac{\|{{\psi^*}}'\|_2^2}{\|{{\psi^*}}\|_2^2}= \frac{\|{\psi}'\|_2^2}{\|{\psi}\|_2^2},
\]
thus the statement about equality follows from the corresponding statement in Lemma \ref{lem:bra4}.
\end{proof}

{
\section{Appendix: Torsion as landscape function}\label{sec:torsland}

Let us formulate an abstract version of a comparison principle between eigenfunctions and torsion function.
The following generalizes known bounds for Laplace or Schrödinger operators with Dirichlet boundary conditions on open domains of $\R^d$ in \cite[Theorem 1, Equation (0.5)]{BanCar01},
\cite[Lemma 1.1]{GioSmi10}, 
 \cite[Theorem 5, Equations (23) and (25)]{Ber12},
%\footnote{\MP{Ich habe hier sowohl Referenzen zu dem Theorem als auch der Gleichung gegeben. Wie ist es hier üblicher zu zitieren?} \DM{Es ist ok, wie Du es gemacht hast.}} 
\cite{FilMay12}, and \cite[Theorem 2]{Ste17}.

\begin{proposition}\label{lem:landscape-abstr}
Let $A$ be a closed linear operator on $C_b(X)$, where $X$ is a locally compact metric space; or else on $L^p(X)$,  for some $p\in [1,\infty]$ and some $\sigma$-finite measure space $X$. Let $(\lambda,\varphi)$ be an eigenpair of $A$. 

Finally, let $\rho$ be a positive function in $C(X)$, resp.\  in $L^p(X)$, such that $\frac{\varphi}{\rho}$ is bounded, resp.\ essentially bounded. Then the following assertions hold.

\begin{enumerate}[(1)]
\item If \(A\) has positive inverse (in the sense of Banach lattices), then
\begin{equation}\label{eq:landscape-superabstr}
|\varphi(x)|\le |\lambda|\left\|\frac{\varphi}{\rho}\right\|_\infty A^{-1}\rho (x)\qquad\hbox{for all/a.e.\ }x\in X.
\end{equation}
\item If $-A$ generates a positive semigroup, then
\begin{equation}\label{eq:landscape-steiner}
|\varphi(x)|\le \left\|\frac{\varphi}{\rho}\right\|_\infty \e^{t\real \lambda}\e^{-tA} \rho(x)\qquad\hbox{for all/a.e.\ }x\in X\hbox{ and all }t\ge 0.
\end{equation}
\item If \(A\) has a positive inverse \emph{and} \(-A\) generates a positive semigroup, then
\begin{equation}\label{eq:landscape-steiner-doch-general}
|\varphi(x)|\le |\lambda|\left\|\frac{\varphi}{\rho}\right\|_\infty \e^{t\real \lambda}\e^{-tA} A^{-1}\rho(x)\qquad\hbox{for all/a.e.\ }x\in X\hbox{ and all }t\ge 0.
\end{equation}
\end{enumerate}
\end{proposition}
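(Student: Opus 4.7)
The plan is to use the eigenvalue equation to express $\varphi$ in terms of $A^{-1}\varphi$ (for (1)) and of $e^{-tA}\varphi$ (for (2)), and then exploit the positivity/monotonicity of the operators $A^{-1}$, $e^{-tA}$ to transfer the obvious pointwise bound $|\varphi|\le \|\varphi/\rho\|_\infty\,\rho$ to a bound on $\varphi$ itself. The single key fact underlying all three items is the abstract Kato-type inequality: a positive linear operator $T$ on a complex Banach lattice satisfies $|Tf|\le T|f|$. This is standard, but for self-containedness I would record the one-line argument: for every $\theta\in[0,2\pi)$ one has $\real(\e^{i\theta}f)\le |f|$ pointwise (resp.\ a.e.), so positivity yields $\real(\e^{i\theta}Tf)=T\bigl(\real(\e^{i\theta}f)\bigr)\le T|f|$, and taking the supremum over $\theta$ gives $|Tf|\le T|f|$.

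For part (1), since $A\varphi=\lambda\varphi$ and $A$ is invertible, we have $\varphi=\lambda A^{-1}\varphi$. Taking moduli and applying the Kato-type inequality to the positive operator $A^{-1}$,
\[
|\varphi(x)|=|\lambda|\,|A^{-1}\varphi(x)|\le |\lambda|\,A^{-1}|\varphi|(x)\le |\lambda|\left\|\tfrac{\varphi}{\rho}\right\|_\infty A^{-1}\rho(x),
\]
where in the last step we used monotonicity of the positive operator $A^{-1}$ on the pointwise bound $|\varphi|\le \|\varphi/\rho\|_\infty\rho$.

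For part (2), the eigenvalue equation propagates through the semigroup: $\e^{-tA}\varphi=\e^{-t\lambda}\varphi$ for every $t\ge 0$, so $\varphi=\e^{t\lambda}\e^{-tA}\varphi$. The same argument as in (1), applied now with the positive operator $\e^{-tA}$ in place of $A^{-1}$, gives
\[
|\varphi(x)|=\e^{t\real\lambda}\,|\e^{-tA}\varphi(x)|\le \e^{t\real\lambda}\,\e^{-tA}|\varphi|(x)\le \left\|\tfrac{\varphi}{\rho}\right\|_\infty \e^{t\real\lambda}\,\e^{-tA}\rho(x).
\]

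For part (3), I would simply compose the two identities: starting from $\varphi=\lambda A^{-1}\varphi$ and then applying $\varphi=\e^{t\lambda}\e^{-tA}\varphi$ to the right-hand side (using that $\e^{-tA}$ and $A^{-1}$ commute by standard semigroup theory) yields $\varphi=\lambda\,\e^{t\lambda}\,\e^{-tA}A^{-1}\varphi$. Since both $\e^{-tA}$ and $A^{-1}$ are positive, so is their composition, and the Kato-type inequality can be applied to $T:=\e^{-tA}A^{-1}$, whence
\[
|\varphi(x)|=|\lambda|\e^{t\real\lambda}|T\varphi(x)|\le |\lambda|\e^{t\real\lambda}T|\varphi|(x)\le |\lambda|\left\|\tfrac{\varphi}{\rho}\right\|_\infty \e^{t\real\lambda}\e^{-tA}A^{-1}\rho(x).
\]
The only genuine subtlety -- and the place where the hypothesis that $\varphi/\rho$ be (essentially) bounded is used -- is to guarantee that every quantity on the right-hand side is well defined and finite; beyond that, the proof is purely algebraic once the Kato inequality is in hand, so I do not expect any serious obstacle.
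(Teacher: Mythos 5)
Your proof is correct and follows essentially the same route as the paper's: use $\varphi=\lambda A^{-1}\varphi$ (resp.\ $\varphi=\e^{t\lambda}\e^{-tA}\varphi$, via the spectral mapping theorem), then transfer the pointwise bound $|\varphi|\le\|\varphi/\rho\|_\infty\,\rho$ through the positive operator. The only difference is that you make explicit the intermediate modulus inequality $|Tf|\le T|f|$ for a positive operator $T$ (and give its standard one-line proof), whereas the paper passes directly from $|A^{-1}\varphi(x)|$ to $A^{-1}(\|\varphi/\rho\|_\infty\rho)(x)$ in a single step — a harmless compression of the same argument.
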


\begin{proof}
(1) We find for all/a.e.\ $x\in X$
\begin{equation}\label{eq:filmay-veryabs}
|\varphi(x)|=|\lambda A^{-1}\varphi(x)|\le |\lambda| A^{-1}\left(\left\|\frac{\varphi}{\rho}\right\|_\infty \rho\right)(x)=|\lambda|\left\|\frac{\varphi}{\rho}\right\|_\infty A^{-1}{\mathbf \rho}(x).
\end{equation}

(2) In order to prove \eqref{eq:landscape-steiner}, observe that by the Spectral Mapping Theorem $(\e^{-\lambda t},\varphi)$ is an eigenpair of $\e^{-tA}$ and, hence, due to positivity of $(\e^{\real \lambda t}\e^{-tA})_{t\ge 0}$,
\begin{equation}\label{eq:landscape-steiner-proof}
|\varphi(x)|=|\e^{\lambda t}\e^{-tA}\varphi(x)|\le \left\|\frac{\varphi}{\rho}\right\|_\infty  \e^{t\real \lambda}\e^{-tA} \rho(x)\qquad\hbox{for all/a.e.\ }x\in X.
\end{equation}
Taking the infimum over $t\ge 0$ concludes the proof of \eqref{eq:landscape-steiner}.

Finally, (3) is obtained by combining the arguments in (1) and (2).
\end{proof}

Recall that, if $A$ is invertible, then $A^{-1}$ is certainly positive whenever $-A$ generates a positive \textit{contraction} semigroup.

The following corollary is obtained by choosing  \(\rho=\mathbf 1\) and \(\rho=A^{-1}\mathbf 1\) in Proposition \ref{lem:landscape-abstr}. The assumptions on $X$ in the following can clearly be weakened: we omit the details for the sake of simplicity.

\begin{corollary}\label{cor:abstr-giosmi}
Let $A$ be a closed linear operator on $C(X)$, where $X$ is a compact metric space; or else on $L^p(X)$,  for some $p\in [1,\infty]$ and some finite measure space $X$. Let $A$ have positive inverse, and denote by $v$ the \emph{abstract torsion function}, i.e., the  (positive) solution of $Av={\mathbf 1}$. 

Then the following assertions hold for any eigenpair $(\lambda,\varphi)$.
	\begin{enumerate}[(1)]
	\item
If \(\varphi\) is bounded, then
\begin{equation}\label{eq:landscape-abstr}
|\varphi(x)|\le |\lambda|\|\varphi\|_\infty v(x)\qquad\hbox{for all/a.e.\ }x\in X.
\end{equation}
	\item If $-A$ generates a positive semigroup and \(\frac{\varphi}{v}\) is bounded, then
\begin{equation}\label{eq:landscape-steiner-maybe}
|\varphi(x)|\le \left\|\frac{\varphi}{v}\right\|_\infty  \e^{t\real \lambda}\e^{-tA} v(x)\qquad\hbox{for all/a.e.\ }x\in X \hbox{ and all }t\ge 0.
\end{equation}
\item If $-A$ generates a positive semigroup and \(\varphi\) is bounded, then
\begin{equation}\label{eq:landscape-steiner-doch}
|\varphi(x)|\le |\lambda|\left\|\varphi\right\|_\infty  \e^{t\real \lambda}\e^{-tA} v(x)\qquad\hbox{for all/a.e.\ }x\in X\hbox{ and all }t\ge 0.
\end{equation}
	\end{enumerate}
\end{corollary}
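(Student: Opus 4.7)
The plan is to derive all three claims by direct specialization of Proposition~\ref{lem:landscape-abstr}, choosing convenient auxiliary functions $\rho$. First I would observe that because $X$ is compact metric (respectively, of finite measure), the constant function $\mathbf{1}$ lies in $C(X)$ (respectively, in $L^p(X)$ for every $p\in[1,\infty]$), so it is an admissible $\rho$ in the proposition. Since $A$ is assumed to have a positive inverse, the abstract torsion function $v=A^{-1}\mathbf{1}$ is well-defined and strictly positive, so $v$ is itself also admissible as $\rho$ whenever needed.

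For assertion (1), I would apply Proposition~\ref{lem:landscape-abstr}(1) with $\rho=\mathbf{1}$. Then $A^{-1}\rho=v$, and because $\rho\equiv 1$ one has $\left\|\varphi/\rho\right\|_\infty=\|\varphi\|_\infty$, which is finite by hypothesis. The inequality \eqref{eq:landscape-superabstr} immediately becomes $|\varphi(x)|\le|\lambda|\|\varphi\|_\infty v(x)$, as required.

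For assertion (2), I would apply Proposition~\ref{lem:landscape-abstr}(2) with $\rho=v$. The positivity of $v$ was already noted, and boundedness of $\varphi/v$ is the standing hypothesis of this clause, so \eqref{eq:landscape-steiner} directly yields \eqref{eq:landscape-steiner-maybe}. For assertion (3), I would apply Proposition~\ref{lem:landscape-abstr}(3) with $\rho=\mathbf{1}$: then $\|\varphi/\rho\|_\infty=\|\varphi\|_\infty$ (finite by hypothesis) and $A^{-1}\rho=v$, so \eqref{eq:landscape-steiner-doch-general} specializes to \eqref{eq:landscape-steiner-doch}.

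There is really no substantial obstacle: the corollary is a pure instantiation, and the only bookkeeping required is verifying admissibility of the test functions $\mathbf{1}$ and $v$ in the ambient Banach lattice. This is where compactness of $X$ (or finiteness of the measure) enters, guaranteeing $\mathbf{1}\in C(X)$ or $\mathbf{1}\in L^p(X)$; positivity of $A^{-1}$ then transfers this admissibility to $v$.
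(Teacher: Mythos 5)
Your proof matches the paper's exactly: the corollary is obtained by specializing Proposition~\ref{lem:landscape-abstr} with $\rho=\mathbf{1}$ for assertions (1) and (3) and with $\rho=v=A^{-1}\mathbf{1}$ for assertion (2), and you have correctly identified the admissibility checks needed. Nothing to add.
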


In particular, we deduce from \eqref{eq:landscape-abstr} and \eqref{eq:landscape-steiner-maybe} that
\begin{equation}\label{eq:giosmispec}
	1 \leq |\lambda|_{\min} \|{v}\|_\infty\quad \hbox{and}\quad 
		1 \leq |\lambda|_{\min} \e^{t\real \lambda_{\min}}\|\e^{-tA} v\|_\infty\hbox{ for all }t\ge 0,
\end{equation}
 if additionally \(v\) is bounded, where $|\lambda|_{\min}:=\min\{|\lambda|\ |\ \lambda\in\sigma_p(A)\}$: we have already encountered the first of these estimates in~\eqref{eq:giosmispec-app}.

\medskip
Observe that if $A$ admits compactly supported eigenfunctions -- this is, e.g., often the case for Laplacians on metric graphs -- then the proof of Proposition \ref{lem:landscape-abstr} even yields the sharper estimates
\begin{equation}\label{eq:even}
\begin{split}
|\varphi(x)|&\le |\lambda|\|\varphi\|_\infty A^{-1} {\mathbf 1}_{\supp \varphi}(x),\\
|\varphi(x)|&\le \left\|\varphi\right\|_\infty \e^{t\real \lambda}\e^{-tA} {\mathbf 1}_{\supp \varphi}(x),\\
|\varphi(x)|&\le \left\|\frac{\varphi}{v}\right\|_\infty \e^{t\real \lambda}\e^{-tA} A^{-1} {\mathbf 1}_{\supp \varphi}(x),\\
|\varphi(x)|&\le |\lambda|\|\varphi\|_\infty \e^{t\real \lambda}\e^{-tA} A^{-1} {\mathbf 1}_{\supp \varphi}(x),
\end{split}
\qquad\hbox{for all/a.e.\ }x\in X\hbox{ and all }t\ge 0.
\end{equation}

Indeed, the assumptions of Proposition~\ref{lem:landscape-abstr} are satisfied if we take $X=\Graph$ and $A=-\Delta_{\Graph}$: in fact, all eigenfunctions of $-\Delta_{\Graph}$ are bounded, since they belong to $H^1(\Graph)\hookrightarrow L^\infty(\Graph)$; furthermore, the quotient $\frac{\varphi}{v}$ is bounded for any eigenfunction $\varphi$, in view of the properties of the torsion function discussed in Section~\ref{sec:reduction} and the well-known fact that eigenfunctions of $-\Delta_{\Graph}$ behave like $o(x)$ as $x\to \mv\in\mVD$, see Example~\ref{exa:basic-tors}.(1).

Bearing in mind that $\e^{t\Delta_\Graph}v$ is for all $t\ge 0$ a continuous function, we hence obtain the following, a close analog of~\cite[Theorem~4.1 and Remark~4.1]{HarMal20}. 

\begin{proposition}\label{prop:steiner}
Let $-\Delta_\Graph \varphi=\lambda\varphi$, and let as usual $v$ denote the torsion function on a metric graph $\Graph$ that satisfies the Assumption~\ref{ass:graph}. Then
\begin{equation}\label{eq:stein-0}
|\varphi(x)|\le \lambda\left\|\varphi\right\|_\infty (-\Delta_\Graph^{-1} {\mathbf 1}_{\supp \varphi})(x)\le\lambda\left\|\varphi\right\|_\infty v(x) \qquad\hbox{for all }x\in \Graph.
\end{equation}
Furthermore,
\begin{equation}\label{eq:stein-1}
|\varphi(x)|\le \left\|\frac{\varphi}{v}\right\|_\infty \inf_{t\ge 0}\e^{\lambda t}\e^{t\Delta_\Graph} (-\Delta_\Graph^{-1} {\mathbf 1}_{\supp \varphi})(x)\le  \left\|\frac{\varphi}{v}\right\|_\infty \inf_{t\ge 0}\e^{\lambda t}\e^{t\Delta_\Graph} v(x)\qquad\hbox{for all }x\in \Graph
\end{equation}
as well as
\begin{equation}\label{eq:stein-2}
|\varphi(x)|\le \lambda \left\|\varphi\right\|_\infty \inf_{t\ge 0}\e^{\lambda t}\e^{t\Delta_\Graph} (-\Delta_\Graph^{-1} {\mathbf 1}_{\supp \varphi})(x)\le \lambda \left\|\varphi\right\|_\infty \inf_{t\ge 0}\e^{\lambda t}\e^{t\Delta_\Graph} v(x)\qquad\hbox{for all }x\in \Graph.
\end{equation}
\end{proposition}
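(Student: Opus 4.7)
The plan is to deduce this proposition directly from the abstract machinery of Proposition~\ref{lem:landscape-abstr} and its support-sharpened form~\eqref{eq:even}, specialising to $X=\Graph$ (equipped with Lebesgue measure) and $A=-\Delta_{\Graph}$ with Dirichlet conditions on $\mVD$. The chains of inequalities in~\eqref{eq:stein-0}--\eqref{eq:stein-2} then correspond exactly to~\eqref{eq:even} together with the monotonicity $\mathbf{1}_{\supp\varphi}\le \mathbf{1}$.

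First I would check the three structural hypotheses. Invertibility of $A$ with positive inverse follows from the solvability of~\eqref{eq:polya0-bis} together with Corollary~\ref{cor:positivity-of-torsion}: indeed, $-\Delta_{\Graph}^{-1}f$ is the unique solution of $-\Delta u=f$, $u_{|\mVD}=0$, and if $f\ge 0$ then an obvious adaptation of Lemma~\ref{lem:positivity-of-discrete-torsion} (or the maximum principle obtained edgewise from $-u''=f\ge 0$ combined with the Kirchhoff condition, exactly as in Lemma~\ref{lem:no-local-min}) yields $u\ge 0$. That $\Delta_{\Graph}$ generates a positive (in fact, sub-Markovian) semigroup is recalled in the introduction; see~\cite[Thm.~4.7]{Mug07}. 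Finally, any eigenfunction $\varphi\in D(\Delta_{\Graph})\subset H^1_0(\Graph;\mVD)$ is bounded since, by Assumption~\ref{ass:graph}, $\Graph$ is compact and the continuous embedding $H^1(\Graph)\hookrightarrow L^\infty(\Graph)$ holds edgewise.

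The only genuinely delicate point is the boundedness of $\varphi/v$, required to invoke Proposition~\ref{lem:landscape-abstr} with $\rho=v$ for~\eqref{eq:stein-1}. Away from $\mVD$ the quotient is continuous because $v>0$ on $\Graph\setminus\mVD$ by Corollary~\ref{cor:positivity-of-torsion}, and $\Graph\setminus U$ is compact for any neighbourhood $U$ of $\mVD$. Near a Dirichlet vertex $\mv\in\mVD$ of degree one, sitting at the endpoint $0$ of an edge $\me$, both $v_\me$ and $\varphi_\me$ vanish at $0$ and are $C^1$ up to the boundary; the explicit form~\eqref{eq:torsion-edgewise} shows $v_\me'(0)=\tfrac{\ell_\me}{2}+\tfrac{v_\me(\ell_\me)}{\ell_\me}>0$, so L'H\^opital (or a first-order Taylor expansion) yields
\[
\lim_{x\to 0^+}\frac{\varphi_\me(x)}{v_\me(x)}=\frac{\varphi_\me'(0)}{v_\me'(0)}\in\R,
\]
which together with the compactness argument above proves $\varphi/v\in L^\infty(\Graph)$.

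With all hypotheses in place, the first inequalities in each of~\eqref{eq:stein-0},~\eqref{eq:stein-1},~\eqref{eq:stein-2} follow by substituting $\rho=\mathbf{1}_{\supp\varphi}$ in the refined bounds~\eqref{eq:even}; the estimate $\tfrac{\varphi}{v}\le\tfrac{\varphi}{\mathbf{1}_{\supp\varphi}}$ (interpreted as $\|\varphi\|_\infty$ where both sides live) and the trivial bound $|\varphi|\le \|\varphi\|_\infty\mathbf{1}_{\supp\varphi}$ used in the proof of Proposition~\ref{lem:landscape-abstr} give exactly the stated constants. The second inequality in each chain is then immediate from $\mathbf{1}_{\supp\varphi}\le\mathbf{1}$ and the positivity of $-\Delta_{\Graph}^{-1}$ and $\e^{t\Delta_{\Graph}}$: namely $-\Delta_{\Graph}^{-1}\mathbf{1}_{\supp\varphi}\le -\Delta_{\Graph}^{-1}\mathbf{1}=v$, and applying the positive operator $\e^{t\Delta_{\Graph}}$ (which commutes with $-\Delta_{\Graph}^{-1}$) preserves this inequality, so that the passage to the pointwise infimum over $t\ge 0$ is legitimate thanks to the continuity in $x$ of $\e^{t\Delta_{\Graph}}v$ guaranteed by the smoothing property of the heat semigroup.
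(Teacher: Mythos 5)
Your proof takes exactly the route the paper does: specialize the abstract Proposition~\ref{lem:landscape-abstr} and the support-sharpened bounds~\eqref{eq:even} to $X=\Graph$, $A=-\Delta_\Graph$, after verifying positivity of the resolvent and semigroup, boundedness of eigenfunctions, boundedness of $\varphi/v$, and continuity of $\e^{t\Delta_\Graph}v$; the passage from $\mathbf 1_{\supp\varphi}$ to $\mathbf 1$ is, as you say, positivity plus $v=-\Delta_\Graph^{-1}\mathbf 1$. Your L'H\^opital treatment near Dirichlet vertices is a cleaner justification of the boundedness of $\varphi/v$ than the paper's brief ``$o(x)$'' remark (in fact, an eigenfunction vanishes \emph{linearly} at a Dirichlet vertex, like $\varphi_\me'(0)\,x$, which is precisely what your argument uses, while $v_\me'(0)\ge \ell_\me/2>0$ by \eqref{eq:torsion-edgewise-deriv}); only the parenthetical ``$\tfrac{\varphi}{v}\le\tfrac{\varphi}{\mathbf 1_{\supp\varphi}}$'' clause is garbled and should simply be dropped, since the constants in~\eqref{eq:even} already appear in the form needed.
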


Observe that Corollary~\ref{cor:abstr-giosmi} can be also applied to Laplacians on \textit{combinatorial} graphs (with Dirichlet conditions) or even more general $Z$-matrices; see also~\cite{FilMayTao21} for recent, more sophisticated localization results for such matrices.

\begin{remark}
(1) {The proofs of estimates \eqref{eq:landscape-abstr} and \eqref{eq:giosmispec} already available in the literature rely upon the additional 
assumption that $A$ is self-adjoint and/or that it has compact resolvent.} The estimate \eqref{eq:landscape-steiner-doch} was first obtained in \cite[Theorem~2]{Ste17}, whose proof is based on a Feynman--Kac-type formula that is assumed to hold with respect to the Brownian motion generated by the relevant Schrödinger operator.

(2) Also, \eqref{eq:landscape-abstr} yields
\begin{equation}\label{eq:giosmispec-eff}
	\frac{\|\varphi\|_1}{\|\varphi\|_\infty} \leq |\lambda| T(X)
\end{equation}
 for the \emph{abstract torsional rigidity} $T(X):=\|v\|_1$ and any eigenpair $(\lambda,\varphi)$ of $A$.
% ; {or even 
% \begin{equation}\label{eq:giosmispec-eff}
%	\frac{\|\varphi\|_1}{\|\varphi\|_\infty} \leq |\lambda| \frac{T(X)}{\|v\|_\infty}.
%\end{equation}
%if $A$ is Markovian}.
 
%If $\lambda_1$ is the {ground-state energy} of $-A$ and $\varphi_1$ the associated ground state, then 
The  \emph{efficiency} $E(X):=\frac{\|\varphi_1\|_1}{|X|\|\varphi_1\|_\infty}$ is commonly studied for the ground state $\varphi_1$ of domains, see~\cite{BerDelDiB21} and references therein.

(3) Following a suggestion in~\cite[Section~4]{HarMal20}, Proposition~\ref{lem:landscape-abstr}.(1) can be generalized by observing that the inequality \eqref{eq:filmay-veryabs} is satisfied not only by the torsion function $v=A^{-1}{\mathbf 1}$, but by any -- possibly better behaved -- ``super-torsion function'', i.e., by any $v\ge A^{-1}{\mathbf 1}$ (in fact, by any $v\ge A^{-1}{\mathbf 1}_{\supp \varphi}$). However, it is not clear if this brings any advantages in our setting, given that a reasonably explicit formula for the torsion function is available on metric graphs.

(4) Localization for operators that do not satisfy a maximum principle is a popular topic, see~\cite{LefGonDub16} and references therein.
 Let us explicitly observe that if $A$ is merely invertible but its inverse is not positive, \eqref{eq:landscape-superabstr}--\eqref{eq:landscape-abstr}--\eqref{eq:landscape-steiner}--\eqref{eq:landscape-steiner-maybe} can still be replaced by corresponding estimates involving the terms $|A^{-1}|\mathbf 1(x)$, $ |A^{-1}|\mathbf 1(x)$, $|\e^{-tA}| \rho(x)$, $|\e^{-tA}| v(x)$, respectively, provided $A^{-1}$ and $\e^{-tA}$ have a modulus in the sense of~\cite[Section IV.1]{Sch74}, see also~\cite[Section~C-II]{Nag86}: for instance, on $C(X)$ or $L^p(X)$ as in Proposition~\ref{lem:landscape-abstr}, this is especially the case if $A^{-1}$, resp.\ $\e^{-tA}$, is a kernel operator, and in particular for general square matrices with complex coefficients.
%  
%\begin{equation}\label{eq:landscape-superabstr'}
%|\varphi(x)|\le |\lambda| \left\| \frac{\varphi}{\rho}\right\|_\infty |A^{-1}\rho(x)|,\qquad\hbox{for all/a.e.\ }x\in X,
%\end{equation}
%and in particular
%\begin{equation}\label{eq:landscape-superabstr''}
%|\varphi(x)|\le |\lambda| \left\| \varphi\right\|_\infty |A^{-1}\mathbf 1(x)|\qquad\hbox{for all/a.e.\ }x\in X,
%\end{equation}
%provided $\frac{\varphi}{\rho}$ and $\varphi$ are bounded, respectively. Likewise, if $-A$ generates a semigroup $(\e^{-tA})_{t\ge 0}$ that is not positive, then we still have
%\begin{equation}\label{eq:landscape-steiner'}
%|\varphi(x)|\le \left\|\frac{\varphi}{\rho}\right\|_\infty \inf_{t\ge 0}\e^{t\real \lambda}|\e^{-tA} \rho(x)|\qquad\hbox{for all/a.e.\ }x\in X,
%\end{equation}
%and in particular
%\begin{equation}\label{eq:landscape-steiner-maybe'}
%|\varphi(x)|\le \left\|\frac{\varphi}{v}\right\|_\infty  \inf_{t\ge 0}\e^{t\real \lambda}|\e^{-tA} v(x)|\qquad\hbox{for all/a.e.\ }x\in X,
%\end{equation}
%again provided $\frac{\varphi}{\rho}$ and $\varphi$ are bounded, respectively. 
These generalizations are straightforward and in line with what was already observed in~\cite{FilMay12}. 

Furthermore, let us mention that Proposition~\ref{lem:landscape-abstr}.(2)--(3) and its corollaries can be extended in a natural way to operators that generate semigroups which are merely \emph{individually eventually positive}: this is especially the case for distinguished realizations of higher order elliptic operators on bounded domains, see~\cite{DanGluKen16b}, and metric graphs, see~\cite{GreMug20}.
\end{remark}
}

\bibliographystyle{plain}

\end{document}